\newtheorem{mydef}{Definition}[section]
\newtheorem{thm}[mydef]{Theorem}
\newtheorem{prop}[mydef]{Proposition}
\newtheorem{lem}[mydef]{Lemma}
\newtheorem{conj}[mydef]{Conjecture}
\newtheorem{cor}[mydef]{Corollary}
\theoremstyle{remark}
\newtheorem{rem}[mydef]{Remark}
\newtheorem{ex}[mydef]{Example}
\newtheorem*{notations}{Notations}
\newtheorem*{ac}{Acknowledgments}
\title{Absolutely special subvarieties and absolute Hodge cycles}
\author{Tobias Kreutz}
\date{\vspace{-5ex}}
\begin{document}
\maketitle
\begin{abstract}
We introduce the notion of dR-absolutely special subvarieties in motivic variations of Hodge structure as special subvarieties cut out by (de Rham-)absolute Hodge cycles and conjecture that all special subvarieties are dR-absolutely special.
This is implied by Deligne's conjecture that all Hodge cycles are absolute Hodge cycles, but is a much weaker conjecture.
We prove our conjecture for subvarieties satisfying a simple monodromy condition introduced in \cite{KOU}.
We study applications to typical respectively atypical intersections and $\bar{\mathbb{Q}}$-bialgebraic subvarieties. Finally, we show that Deligne's conjecture as well as ours can be reduced to the case of special points in motivic variations.
\end{abstract}
\section*{Introduction}
Let $f:X \to S$ be a smooth projective morphism of smooth (irreducible) quasi-projective algebraic varieties over $\mathbb{C}$.
For any integer $k \ge 0$ the $k$-th primitive cohomology of this family gives rise to a polarizable $\mathbb{Q}$-variation of Hodge structure
$(\mathbb{V}, \mathcal{V}, \nabla, F^{\bullet})$, where
$\mathbb{V} = R_{prim}^kf _{*}^{an} \underline{\mathbb{Q}}$ is a $\mathbb{Q}$-local system on $S^{an}$,
$(\mathcal{V} =R_{prim}^kf_{*} \Omega_{X/S}^{\bullet}, \nabla)$ is the associated algebraic vector bundle with flat connection and $F^{\bullet}$ is the Hodge filtration of the family.
We will usually drop the additional data from the notation and simply denote the variation of Hodge structure by $\mathbb{V}$.
Let $Z \subset S$ be a closed irreducible subvariety.
After choosing a point $s \in Z(\mathbb{C})$, one defines the generic Mumford-Tate group $G_Z$ of $Z$ as the subgroup of $GL(\mathbb{V}_s)$ defined by the condition that it fixes all generic Hodge tensors on $Z$ (a different choice of $s$ defines a conjugated subgroup).

The variation $\mathbb{V}$ gives rise to interesting algebraic subvarieties of $S$, the so-called \emph{special subvarieties} of $S$.

\begin{mydef}[\cite{KO}, Definition 1.2] \label{defspintro1}
A closed irreducible subvariety $Z \subset S$ is called \emph{special} for $\mathbb{V}$ if it is maximal among the closed irreducible algebraic subvarieties of $S$ having the same generic Mumford-Tate group as $Z$.
\end{mydef}
Thus special subvarieties are varieties which are cut out by Hodge cycles.
Although the definition of Hodge cycles is purely analytic,
the Hodge conjecture asserts that all Hodge cycles come from algebraic cycles. We therefore expect that Hodge cycles, seen as classes in the algebraic vector bundle $\mathcal{V}$, are preserved under algebraic field automorphisms of $\mathbb{C}$. This motivates the definition of absolute Hodge cycles by Deligne.
For any automorphism $\sigma \in \mathrm{Aut}(\mathbb{C} / \mathbb{Q})$ we can form the conjugate $S^{\sigma}:= S \otimes_{\mathbb{C}, \sigma} \mathbb{C}$. The projection defines a map $\sigma^{-1}: S \otimes_{\mathbb{C}, \sigma} \mathbb{C} \to S$. Similarly as above, from the conjugated family $f^{\sigma}: X^{\sigma} \to S^{\sigma}$ we obtain a variation of Hodge structure $(\mathbb{V}^{\sigma}, \mathcal{V}^{\sigma}, \nabla^{\sigma}, F^{\bullet, \sigma})$.
The fact that de Rham cohomology can be defined algebraically provides comparison isomorphisms $$ \iota_{\sigma}: (\mathcal{V}^{\sigma} , \nabla^{\sigma}, F^{\bullet, \sigma}) \cong {\sigma^{-1}}^{*} (\mathcal{V}, \nabla, F^{\bullet})$$ of the algebraic filtered vector bundles with connection over $S^{\sigma}$.
We call a collection of variations of Hodge structure $(\mathbb{V}^{\sigma})_{\sigma}$ satisfying the above comparison an \emph{absolute variation of Hodge structure}. An absolute variation of Hodge structure over $S$ is called \emph{of geometric origin} if it arises from a smooth projective family $f:X \to S$ as above.
When given an absolute variation of Hodge structure, following Deligne, one can define a notion of dR-absolute Hodge tensor as a Hodge tensor $\alpha \in \mathbb{V}_s^{\otimes}$ such that, for any $\sigma \in \mathrm{Aut}(\mathbb{C} / \mathbb{Q}) $, the $\sigma$-conjugate of the de Rham component of $\alpha$ comes from a Hodge tensor in $(\mathbb{V}^{\sigma}_{s^{\sigma}})^{\otimes} $ via the comparison isomorphism $\iota_{\sigma}$.

\begin{rem}
The notion of absolute Hodge cycle introduced in Deligne's work \cite{Deligne} is stronger than the version we use here, in the sense that his notion includes a condition on the $\ell$-adic components of Hodge cycles as well. Here we only consider de Rham absolute Hodge cycles in the terminology of (\cite{Voisin}, Definition 0.1). In the following, we will write dR-absolute Hodge for this notion.
\end{rem}

For a subvariety $Z\subset S$ we define the generic dR-absolute Mumford-Tate group $G_Z^{AH}$ to be the subgroup of $GL(\mathbb{V}_s)$ defined by the condition that it fixes all generic dR-absolute Hodge tensors over $Z$.

Since the Hodge conjecture asserts that all Hodge cycles come from algebraic cycles, and an automorphism $\sigma \in \mathrm{Aut}(\mathbb{C} / \mathbb{Q})$ maps algebraic cycles to algebraic cycles, the Hodge conjecture implies that all Hodge cycles are \emph{dR-absolute Hodge}.

\begin{conj}[\cite{Deligne}]\label{conjdel1}
All Hodge cycles are dR-absolute Hodge, i.e. $G_Z = G_Z^{AH}$.
\end{conj}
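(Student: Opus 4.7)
The plan is to split Conjecture \ref{conjdel1} into two reductions and then invoke Deligne's theorem on absolute Hodge cycles for abelian varieties, while being explicit that an unconditional proof beyond the abelian range is out of reach with current methods.

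For the first step I would fix a variation $\mathbb{V}$ of geometric origin and a closed irreducible subvariety $Z \subset S$. The inclusion $G_Z^{AH} \supseteq G_Z$ is automatic since every dR-absolute Hodge tensor is a fortiori a Hodge tensor, so the content is the reverse inclusion. This amounts to producing, for every generic Hodge tensor $\alpha$ on $Z$ and every $\sigma \in \mathrm{Aut}(\mathbb{C}/\mathbb{Q})$, a Hodge tensor on the conjugated subvariety $Z^{\sigma} \subset S^{\sigma}$ whose de Rham component matches ${\sigma^{-1}}^{*}\alpha$ under $\iota_\sigma$. A spreading argument using the algebraicity of Hodge loci (Cattani--Deligne--Kaplan) together with the compatibility of $\iota_\sigma$ with restriction reduces the problem to checking dR-absoluteness at a single well-chosen point of $Z$.

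For the second step I would choose this point to be a \emph{special point} $z \in Z(\mathbb{C})$, whose density in $Z$ follows from the fact that $Z$ is itself special. At $z$ the question becomes: for the Hodge structure $\mathbb{V}_z$ coming from the motive of the fiber $X_z$, is every Hodge tensor dR-absolute? This is Deligne's conjecture for the single smooth projective variety $X_z$. If $X_z$ lies in the Tannakian category generated by abelian motives (abelian varieties, Shimura varieties, K3 surfaces via Kuga--Satake, and so on) then by Deligne's theorem all Hodge cycles on $X_z$ are absolute, and combining this with the previous reduction yields $G_Z = G_Z^{AH}$ in those cases.

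The main obstacle is precisely this last assertion for arbitrary $X_z$. Extending absoluteness beyond the abelian range is essentially as deep as the Hodge conjecture itself: there is no known technique that realizes a general Hodge class in terms of algebraic or motivic data manifestly preserved by $\mathrm{Aut}(\mathbb{C}/\mathbb{Q})$. My plan therefore reduces Conjecture \ref{conjdel1} to a question at special points of motivic variations, in line with the final claim of the abstract, but cannot close that question in full generality without genuinely new ideas going beyond Deligne's principle $B$ and the abelian case; accordingly, any version of this proposal that aspires to an unconditional statement must settle for either a restricted class of motives or a weaker notion of absoluteness, which is precisely the route the paper itself takes.
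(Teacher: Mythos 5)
The statement you were asked about is a \emph{conjecture}: the paper offers no proof of it, states explicitly that apart from Deligne's theorem for abelian varieties it ``seems completely out of reach'', and only proves (a) the weaker Conjecture \ref{conjabssp} in special cases and (b) a reduction of Conjecture \ref{conjdel1} to special points in $\bar{\mathbb{Q}}$-motivic variations (Theorem \ref{reducepoints}(\ref{reducedeligne})). You correctly recognize this, and your overall plan --- reduce to points, then invoke Deligne in the abelian range --- is in the spirit of that reduction. The correct tool for spreading dR-absoluteness along $Z$ is Deligne's Principle B (Theorem \ref{PrincipleBnonsmooth}), not Cattani--Deligne--Kaplan, but that is a minor misattribution.

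The genuine gaps are in your second step. First, special points are \emph{not} dense in a general special subvariety of a variation of Hodge structure: density of CM points is a Shimura-variety phenomenon (real approximation for the Mumford--Tate group acting on a symmetric domain), and for non-classical period domains the image of the period map may contain few or no special points at all. So ``choose a special point $z\in Z$'' is not available, and nothing about $Z$ being special supplies it. Second, even granting such a $z$, your reduction lands on Deligne's conjecture for the single fiber $X_z$, which is exactly as hard as the statement you started with --- the reduction is circular rather than a genuine simplification. The paper's reduction avoids both problems by changing the variation, not the point: it forms the quotient variation $\mathbb{V}'=\bigoplus_i(\mathbb{V}^{\otimes(a_i,b_i)})^{H_Z}$, Stein-factors its (proper) period map as $\Psi\circ f$ with $f:S\to B$ defined over $\bar{\mathbb{Q}}$, so that $Z$ \emph{contracts to a point} $x=f(Z)$ of a new $\bar{\mathbb{Q}}$-motivic variation on $B$; the equality $G_Z=H_Z\cdot G'_Z$ together with Proposition \ref{AHquotient} and Principle B then transports the conclusion $G_x=G_x^{AH}$ back to $G_Z=G_Z^{AH}$. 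That quotient-and-contract step is the missing idea; without it your proposal reduces the conjecture only to itself, plus the already-known abelian case.
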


Suppose the family $f:X\to S$ is defined over a number field $K$.
Then the filtered vector bundle with connection $(\mathcal{V}, \nabla, F^{\bullet})$ is also defined over $K$. In this case, we call the absolute variation a \emph{$\bar{\mathbb{Q}}$-absolute variation}.
It is evident from the definition that the de Rham comparison induces a canonical isomorphism $$G_Z^{AH} \otimes_{\mathbb{Q}} \mathbb{C} \cong G_{Z^{\sigma}}^{AH}\otimes_{\mathbb{Q}} \mathbb{C}$$ of group schemes over $\mathbb{Q}$.
Therefore, assuming Conjecture \ref{conjdel1}, if $Z \subset S$ is special then $Z^{\sigma} \subset S^{\sigma}$ is also a special subvariety for every $\sigma$.
Hence Deligne's conjecture \ref{conjdel1} together with the fact that there are only countably many special subvarieties implies that special subvarieties are defined over $\bar{\mathbb{Q}}$ and all of their finitely many $\mathrm{Gal}(\bar{\mathbb{Q}} / K)$-conjugates are again special.

Still, Deligne's conjecture is much stronger: as explained in (\cite{Voisin}, Lemma 1.4), it is equivalent to the statement that the \emph{locus of Hodge classes} in the algebraic vector bundle $\mathcal{V}$ is defined over $\bar{\mathbb{Q}}$ and its Galois conjugates are also contained in the locus of Hodge classes.
We will now formulate a much weaker conjecture which still implies the definability of special subvarieties over $\bar{\mathbb{Q}}$.

The group-theoretic nature of Definition \ref{defspintro1} invites us to make a variant of it, replacing the generic Mumford-Tate group by the generic dR-absolute Mumford-Tate group.

\begin{mydef}[see Definition \ref{absspdef}]\label{introdefabssp}
A closed irreducible algebraic subvariety $Z$ is called \emph{dR-absolutely special} if it is maximal among the closed irreducible algebraic subvarieties of $S$ with generic dR-absolute Mumford-Tate group $G_Z^{AH}$.
\end{mydef}
In particular, we can think of dR-absolutely special subvarieties as subvarieties cut out by \emph{dR-absolute} Hodge cycles
(this is literally true, see the interpretation in terms of period maps in Proposition \ref{absspperiod}).

Note that dR-absolutely special subvarieties satisfy all the good arithmetic properties that are conjectured to hold for all special subvarieties: they are defined over number fields, and their Galois translates are again special subvarieties.

Deligne's conjecture \ref{conjdel1} immediately implies
\begin{conj}\label{conjabssp}
If $Z \subset S$ is a special subvariety, then $Z$ is dR-absolutely special.
\end{conj}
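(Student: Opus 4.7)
The plan is to derive Conjecture \ref{conjabssp} as an immediate formal consequence of Deligne's Conjecture \ref{conjdel1}. Both Definition \ref{defspintro1} (``special'') and Definition \ref{introdefabssp} (``dR-absolutely special'') are maximality conditions of exactly the same shape: $Z$ should be maximal among closed irreducible subvarieties $Z'$ sharing a given generic Mumford-Tate group, respectively a given generic dR-absolute Mumford-Tate group. Under Deligne's conjecture the two notions of Mumford-Tate group coincide for every closed irreducible subvariety, so the two maximality conditions collapse onto each other.

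Concretely, I would fix a base point $s \in Z(\mathbb{C})$ so that $G_W$ and $G_W^{AH}$ are both realised as subgroups of the single ambient group $GL(\mathbb{V}_s)$ for every closed irreducible $W$ containing $s$. Suppose $Z$ is special, and let $Z \subsetneq Z' \subset S$ be any strict closed irreducible superset with $G_{Z'}^{AH} = G_Z^{AH}$. Applying Deligne's conjecture to both $Z$ and $Z'$ gives $G_{Z'} = G_{Z'}^{AH} = G_Z^{AH} = G_Z$, contradicting the maximality of $Z$ among closed irreducible subvarieties with generic Mumford-Tate group $G_Z$. Hence no such $Z'$ exists, and $Z$ is dR-absolutely special.

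The only bookkeeping worth flagging is that generic Mumford-Tate groups (in either flavor) behave monotonically under inclusions: if $W \subset W'$ then a generic Hodge tensor on $W'$ restricts to a generic Hodge tensor on $W$, so $G_W \subset G_{W'}$, and the analogous statement holds for $G_W^{AH} \subset G_{W'}^{AH}$. This is built into the setup of the generic Mumford-Tate group as the stabilizer of generic tensors, so no further work is required. There is no substantial obstacle in this argument; the implication is purely formal, and the word ``immediately'' in the preceding sentence is accurate.
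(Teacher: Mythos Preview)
Your argument is correct and is exactly the formal unpacking the paper has in mind: the paper does not give a proof of this statement (it is stated as a conjecture) but only asserts that Deligne's Conjecture \ref{conjdel1} ``immediately implies'' it, and your derivation is precisely that immediate implication spelled out. The monotonicity remark at the end is harmless but unnecessary, since you only use the equalities $G_{Z'} = G_{Z'}^{AH} = G_Z^{AH} = G_Z$ coming directly from Deligne's conjecture and the hypothesis on $Z'$.
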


Except for Deligne's proof in the case of (families of) abelian varieties (\cite{Deligne}, Theorem 2.11), Conjecture \ref{conjdel1} seems completely out of reach, and we have very little knowledge about the existence of dR-absolute Hodge cycles in general variations.
Considering that, it is perhaps surprising that we can prove Conjecture \ref{conjabssp} in a number of cases.
We do so for subvarieties which satisfy a simple monodromy condition introduced in \cite{KOU}.
For a closed irreducible subvariety $Z \subset S$ we define the algebraic monodromy group $H_Z$ as the connected component of the identity of the Zariski closure of the image of the monodromy representation $\rho_Z: \pi_1(Z^{an},s) \to GL(\mathbb{V}_s) $ corresponding to the restriction of the local system $\mathbb{V}$ to $Z^{an}$.

\begin{mydef}[\cite{KOU}, Definition 1.10]
A closed irreducible subvariety $Z \subset S$ is called \emph{weakly non-factor} if it is not contained in a closed irreducible $Y \subset S$ such that $H_{Z}$ is a strict normal subgroup of $H_{Y}$.
\end{mydef}

\begin{thm}[see Corollary \ref{swnf}]\label{introspwnfabssp}
Assume $Z \subset S$ is a special subvariety which is weakly non-factor. Then $Z$ is dR-absolutely special.
\end{thm}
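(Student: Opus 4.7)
The plan is to argue by contradiction. Suppose $Y \supsetneq Z$ is a strictly larger closed irreducible subvariety of $S$ with $G_Y^{\mathrm{AH}} = G_Z^{\mathrm{AH}}$; since $Z$ is special (hence maximal among closed irreducible subvarieties of $S$ with generic Mumford--Tate group $G_Z$), it suffices to upgrade this equality of dR-absolute Mumford--Tate groups to an equality $G_Y = G_Z$ of ordinary Mumford--Tate groups in order to contradict $Y \supsetneq Z$.

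The first step is to invoke the dR-absolute analog of André's normality theorem, namely that for any closed irreducible $W \subseteq S$ the algebraic monodromy group $H_W$ is a normal subgroup of $(G_W^{\mathrm{AH}})^{\mathrm{der}}$. I expect this to be the main technical ingredient and the principal obstacle; morally it should follow by running André's original argument inside the category of dR-absolute Hodge structures and using that parallel transport preserves the subset of dR-absolute Hodge tensors, and I would look for it as a separate proposition earlier in the paper. Granting it, both $H_Z$ and $H_Y$ lie inside the common group $(G_Z^{\mathrm{AH}})^{\mathrm{der}} = (G_Y^{\mathrm{AH}})^{\mathrm{der}}$ in which $H_Z$ is normal, so the inclusion $H_Z \subseteq H_Y$ automatically upgrades to a normal inclusion $H_Z \trianglelefteq H_Y$. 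The weakly non-factor assumption on $Z$, applied to the pair $Z \subseteq Y$, forbids a \emph{strict} such inclusion, forcing $H_Z = H_Y$.

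It remains to recover $G_Y = G_Z$ from $H_Y = H_Z$, which is a classical Hodge-theoretic step. Fix a point $s \in Z$ which is Hodge-generic for $Z$, so $\mathrm{MT}(s) = G_Z$. A tensor $\alpha \in \mathbb{V}_s^{\otimes}$ is a generic Hodge tensor on $W \in \{Z, Y\}$ exactly when (i) $\alpha$ extends to a flat global section of $\mathbb{V}^{\otimes}|_W$, equivalently $\alpha$ is $H_W$-invariant, and (ii) $\alpha$ is a Hodge class at the single point $s$, the equivalence with Hodge-genericity being Deligne's theorem of the fixed part applied to $\mathbb{V}^{\otimes}|_W$. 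Since $H_Z = H_Y$ and condition (ii) depends only on the Hodge structure at $s$, the two resulting subspaces of $\mathbb{V}_s^{\otimes}$ coincide, hence so do their stabilizers $G_Z$ and $G_Y$. This contradicts the assumption $Y \supsetneq Z$ via the specialness of $Z$, finishing the argument.
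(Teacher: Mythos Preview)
Your proof is correct and follows essentially the same route as the paper: invoke the normality of $H_W$ in $G_W^{AH}$ (Proposition~\ref{HZisnormalinGAH} in the paper, which in fact gives normality in the full group $G_W^{AH}$, not just its derived subgroup) to obtain $H_Z \trianglelefteq H_Y$, apply the weakly non-factor hypothesis to force $H_Z = H_Y$, and then conclude $Z = Y$. The only cosmetic difference is that the paper packages your final step as the separate statement ``special $\Rightarrow$ weakly special'' (Lemma~\ref{simpliesws}, proved via Lemma~\ref{lem1} and the theorem of the fixed part, exactly as you do), so that the main theorem (Theorem~\ref{wswnf1}) is stated for \emph{weakly special} weakly non-factor subvarieties and the special case becomes an immediate corollary; you have simply inlined that lemma.
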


This justifies the result (\cite{KOU}, Theorem 1.12) that weakly non-factor special subvarieties are defined over $\bar{\mathbb{Q}}$ and their Galois conjugates are special.
Examples of weakly non-factor special subvarieties were given in (\cite{KOU}, Corollary 1.13).
An irreducible subvariety $Z$ is called of positive period dimension if $H_Z \not= 1$, or equivalently, the image of $Z$ under the period map is not a point.

\begin{cor}[see Corollary \ref{exampleKOU}]\label{intromaxspabssp}
Suppose that the adjoint group $G_{S}^{ad}$ is simple.
Let $Z \subset S$ be a strict special subvariety which is of positive period dimension and maximal for these properties.
Then $Z$ is dR-absolutely special.
\end{cor}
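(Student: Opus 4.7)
The plan is to reduce to Theorem \ref{introspwnfabssp} by showing that $Z$ is weakly non-factor; given the setup, the hard part is entirely in verifying this group-theoretic condition, as the main theorem then supplies dR-absoluteness for free.

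So suppose for contradiction that there exists a closed irreducible $Y \subset S$ with $Z \subsetneq Y$ and $H_Z$ a strict normal subgroup of $H_Y$. I would first pass to the special closure $\langle Y \rangle \subset S$, i.e.\ the smallest special subvariety of $S$ containing $Y$. This is a special subvariety strictly containing $Z$ (since $Z \subsetneq Y \subset \langle Y \rangle$), and it has positive period dimension because $H_{\langle Y \rangle} \supset H_Y \supsetneq H_Z \neq 1$. By the maximality hypothesis on $Z$ among strict special subvarieties of positive period dimension, the only possibility is $\langle Y \rangle = S$.

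Next I would invoke the standard consequence of André's normality theorem: since $\langle Y \rangle = S$, the algebraic monodromy group $H_Y$ is normal in $H_{\langle Y \rangle} = H_S$. Combined with the further theorem of André that $H_S$ is itself a normal subgroup of $G_S^{der}$, the simplicity of $G_S^{ad}$ forces the image of $H_S$ in $G_S^{ad}$ to be either trivial or everything. Because $S$ contains $Z$ and hence has positive period dimension, $H_S$ is nontrivial; thus $H_S$ is semisimple with simple adjoint, and its only connected normal subgroups are $\{1\}$ and $H_S$ itself.

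Applying this dichotomy to the chain $H_Z \triangleleft H_Y \triangleleft H_S$ produces a contradiction: $H_Y$ cannot be trivial since it contains $H_Z \neq 1$, hence $H_Y = H_S$; but then $H_Z$ is a strict connected normal subgroup of $H_S$, forcing $H_Z = 1$ and contradicting the positive period dimension of $Z$. Hence no such $Y$ exists, $Z$ is weakly non-factor, and Theorem \ref{introspwnfabssp} applies. The main step to pin down carefully is the normality $H_Y \triangleleft H_{\langle Y \rangle}$, which is precisely the monodromy input from \cite{KOU} that makes the whole mechanism go.
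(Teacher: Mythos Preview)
Your argument is correct and follows essentially the same route as the paper's proof of Corollary~\ref{maxspprpos} (of which Corollary~\ref{exampleKOU} is the special case $n=1$): show $Z$ is weakly non-factor and invoke Theorem~\ref{wswnf1}. The paper phrases the key step as ``$G_Y = G_S$'' rather than ``$\langle Y \rangle = S$'', but these are equivalent since the special closure has the same generic Mumford-Tate group as $Y$. One small clarification: the normality $H_Y \triangleleft H_{\langle Y\rangle}$ that you flag as needing care is not really a separate input from \cite{KOU}; it follows directly from Andr\'e's theorem (Proposition~\ref{normal1}) once you know $G_Y = G_{\langle Y\rangle} = G_S$, since then $H_Y \triangleleft G_S^{der}$ and $H_S \subset G_S^{der}$.
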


One might wonder about the difference between Conjecture \ref{conjabssp} and Deligne's conjecture that Hodge classes are dR-absolute Hodge.
We want to emphasize the idea that Conjecture \ref{conjabssp} is a more \emph{geometric} statement than Conjecture \ref{conjdel1}.
While Deligne's conjecture predicts that there is no group $G$ with $G_Z \subsetneq G \subset G_Z^{AH}$, Conjecture \ref{conjabssp} only implies a corresponding geometric statement, namely that no such group $G$ can arise as the generic Mumford-Tate group of a closed irreducible algebraic subvariety $Y$ containing $Z$.

\begin{prop}[see Proposition \ref{nogroup}]\label{intronogroup}
Let $Z$ be dR-absolutely special.
Then there does not exist a closed irreducible algebraic subvariety $Y \supset Z$ with the property
$$G_Z \subsetneq G_Y \subset G^{AH}_Z.$$
\end{prop}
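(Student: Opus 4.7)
The plan is to argue by contradiction from the maximality of $Z$ in Definition \ref{introdefabssp}. Suppose such a $Y$ exists; since $G_Z \subsetneq G_Y$, we have $Z \subsetneq Y$. I will prove that $G_Y^{AH} = G_Z^{AH}$, which then contradicts the maximality of $Z$ among subvarieties sharing the same generic dR-absolute Mumford-Tate group.

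Write $T_Z$ (resp.\ $T_Y$) for the set of generic Hodge tensors of $\mathbb{V}$ over $Z$ (resp.\ $Y$) sitting in $\mathbb{V}_s^{\otimes}$, and $T_Z^{AH}, T_Y^{AH}$ for the subsets that are in addition dR-absolute Hodge. The inclusion $G_Z^{AH} \subset G_Y^{AH}$ is essentially formal: since $Z \subset Y$, a tensor that is generic Hodge on $Y$ is a fortiori generic Hodge on $Z$, and dR-absoluteness is a property of the tensor itself; hence $T_Y^{AH} \subset T_Z^{AH}$, and passing to stabilizers in $GL(\mathbb{V}_s)$ yields $G_Z^{AH} \subset G_Y^{AH}$.

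For the reverse inclusion I will use the hypothesis $G_Y \subset G_Z^{AH}$. Given any $\alpha \in T_Z^{AH}$, by the very definition of $G_Z^{AH}$ as the stabilizer of $T_Z^{AH}$, the tensor $\alpha$ is fixed by $G_Z^{AH}$, and hence by $G_Y$. I will then invoke the standard Mumford--Tate characterization: a tensor in $\mathbb{V}_s^{\otimes}$ lies in $T_Y$ if and only if it is $G_Y$-invariant. This holds because after parallel transport along a path in $Y$ to a Hodge-generic point $y \in Y$, Hodge tensors in $\mathbb{V}_y^{\otimes}$ coincide with the Mumford--Tate invariants. Thus $\alpha \in T_Y$, and since $\alpha$ remains dR-absolute Hodge we conclude $\alpha \in T_Y^{AH}$. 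Taking stabilizers yields the missing inclusion $G_Y^{AH} \subset G_Z^{AH}$.

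Combining the two inclusions gives $G_Y^{AH} = G_Z^{AH}$, contradicting the maximality of $Z$ since $Z \subsetneq Y$. The one non-formal ingredient, and the place where care is needed, is the Mumford--Tate identification of $T_Y$ with the $G_Y$-fixed tensors in $\mathbb{V}_s^{\otimes}$ at an arbitrary (not necessarily Hodge-generic on $Y$) base point $s \in Z$; beyond that, the argument is a short chain of stabilizer/fixed-set comparisons and does not require any further input about dR-absoluteness.
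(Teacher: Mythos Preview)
Your overall strategy---show $G_Y^{AH} = G_Z^{AH}$ and contradict the maximality of $Z$---is exactly the paper's. However, there is a genuine gap in your reverse inclusion $G_Y^{AH} \subset G_Z^{AH}$. You correctly argue that any $\alpha \in T_Z^{AH}$ is $G_Y$-invariant and hence lies in $T_Y$. But to conclude $\alpha \in T_Y^{AH}$ you need the parallel transports of $\alpha$ to be dR-absolute Hodge at \emph{every} point of $Y$, not merely at points of $Z$ (this is how $\mathcal{AH}_Y$, and hence $G_Y^{AH}$, is defined in the paper). Propagating dR-absoluteness from a single point of a global section to all of $Y$ is precisely Deligne's Principle~B (Theorem~\ref{PrincipleBnonsmooth}), which is a nontrivial analytic statement about conjugate variations. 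Your assertion that ``the argument \ldots\ does not require any further input about dR-absoluteness'' is therefore incorrect: Principle~B is exactly the missing input, and without it the sentence ``since $\alpha$ remains dR-absolute Hodge we conclude $\alpha \in T_Y^{AH}$'' is unjustified.

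The paper's proof packages this step into Lemma~\ref{lem2}: from $G_Y \subset G_Z^{AH}$ one has $H_Y \subset G_Z^{AH}$, and Lemma~\ref{lem2} (whose proof is Principle~B plus the observation that $H_Y$-invariance lets the tensor spread over $Y$) immediately gives $G_Y^{AH} = G_Z^{AH}$. Your argument is essentially an unpacking of that lemma, but in unpacking it you dropped the one step with real content. Once you insert Principle~B at the indicated spot, your proof and the paper's coincide.
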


The above philosophy suggests that Conjecture \ref{conjabssp} is particularly strong in situations where the geometry of $Z$ is very closely related to the group-theoretic properties of the period domain.
These cases are known in Hodge theory as \emph{typical intersections} (cf. \cite{BKU}, Definition 1.4).
In Definition \ref{defect} we introduce an integer $\delta_{AH}(Z)$, called the absolute Hodge defect of $Z$, which measures the difference of the dimensions of the Mumford-Tate domains attached to $G_Z$ and $G_Z^{AH}$. Conjecture \ref{conjdel1} predicts that $\delta_{AH}(Z)=0$ for all subvarieties $Z$.
We show in Theorem \ref{bound} that the absolute Hodge defect can be bounded in terms of the Hodge codimension of $S$ and $Z$ as introduced in (\cite{BKU}, Definition 4.1).
This shows that the closer $Z$ is to being a typical intersection, the stronger Conjecture \ref{conjabssp} becomes in relation to Conjecture \ref{conjdel1}.

\begin{thm}[see Theorem \ref{bound}]\label{introbound}
Assume that $G_S =G_S^{AH}$, i.e. all generic Hodge cycles on $S$ are dR-absolute Hodge.
Let $Z$ be a dR-absolutely special subvariety. Then
$$\delta_{AH}(Z) \le Hcd(S)-Hcd(Z).$$
In particular, if $Z$ is a typical special subvariety then $\delta_{AH}(Z)=0$.
\end{thm}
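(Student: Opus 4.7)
The plan is to convert the desired inequality into a dimension count inside the compact dual $\check{D} := \check{D}_{G_S}$ of the ambient Mumford-Tate domain. The assumption $G_S = G_S^{AH}$ forces $D_{G_S^{AH}} = D_{G_S}$, so that the compact duals $\check{D}_{G_Z} \subseteq \check{D}_{G_Z^{AH}} \subseteq \check{D}$ attached to the chain $G_Z \subseteq G_Z^{AH} \subseteq G_S$ all embed in one ambient flag variety. Unfolding the definitions, $\delta_{AH}(Z) = \dim \check{D}_{G_Z^{AH}} - \dim \check{D}_{G_Z}$, while the Hodge codimension of \cite{BKU} reads $Hcd(Y) = \dim \check{D}_{G_Y} - \dim \phi(Y)$, with $\phi$ the period map of $\mathbb{V}$ (and $\dim \phi(Y)$ understood as the dimension of the Zariski closure of a lift in $\check{D}$). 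Substituting these expressions, the target inequality $\delta_{AH}(Z) \le Hcd(S) - Hcd(Z)$ becomes the lower bound
$$\dim \phi(Z) \;\ge\; \dim \phi(S) \;+\; \dim \check{D}_{G_Z^{AH}} \;-\; \dim \check{D}.$$

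To establish this bound I would apply the period-map characterization of dR-absolutely special subvarieties from Proposition \ref{absspperiod}: up to a translate $g \in G_S(\mathbb{C})$, the lift of $Z$ maps into $g \cdot D_{G_Z^{AH}}$, and $Z$ is maximal as an algebraic subvariety with this property. Consequently, (the Zariski closure in $\check{D}$ of) $\phi(Z)$ is an irreducible component of the analytic intersection $\phi(S) \cap g \cdot \check{D}_{G_Z^{AH}}$ inside $\check{D}$. The standard lower bound on the dimension of an irreducible component of an intersection of analytic subvarieties in a smooth ambient space then yields precisely the displayed inequality. For the final assertion, $Z$ typical means $Hcd(Z) = Hcd(S)$ by the definitions of \cite{BKU}, so the bound forces $\delta_{AH}(Z) \le 0$; since $G_Z \subseteq G_Z^{AH}$ gives $\delta_{AH}(Z) \ge 0$, one concludes $\delta_{AH}(Z) = 0$.

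The main obstacle I expect is the rigorous verification that $\phi(Z)$ is genuinely an irreducible component of $\phi(S) \cap g \cdot \check{D}_{G_Z^{AH}}$, rather than only sitting inside a strictly larger component $W$. This is handled by a maximality argument exploiting the algebraicity of the period map on flat weakly special subloci (in the sense of Bakker--Klingler--Tsimerman): if such a strictly larger $W$ existed, then its preimage $\phi^{-1}(W) \subset S$ would be an algebraic subvariety strictly containing $Z$ with generic dR-absolute Mumford-Tate group still contained in $G_Z^{AH}$, contradicting the dR-absolute maximality of $Z$ from Definition \ref{introdefabssp}. Once this identification is in place, the rest is a short dimension chase.
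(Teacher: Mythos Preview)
Your proposal is correct and follows essentially the same route as the paper: the paper also invokes Proposition~\ref{absspperiod} to obtain the key inequality $\dim \Gamma\backslash \mathcal{D}_S - \dim \Phi(S^{an}) \ge \dim \Gamma_Z^{AH}\backslash \mathcal{D}_Z^{AH} - \dim \Phi(Z^{an})$, which is exactly your displayed lower bound rewritten, and then performs the same two-line dimension chase. The only difference is one of explicitness: the paper simply asserts that the inequality ``follows from the description of dR-absolutely special subvarieties in Proposition~\ref{absspperiod}'', whereas you unpack this into the statement that (a lift of) $\Phi(Z)$ is an irreducible component of $\Phi(S)\cap g\cdot \check{\mathcal{D}}_{G_Z^{AH}}$ and then apply the standard codimension bound for analytic intersections in a smooth ambient space, together with a maximality argument (using \cite{BKT}) to rule out a strictly larger component.
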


Let us give a few applications of dR-absolutely special subvarieties.
First we use a recent result of Baldi-Klingler-Ullmo (cf. \cite{BKU}, Proposition 2.4) to give new examples of dR-absolutely special subvarieties. The \emph{level} of a variation of Hodge structure $\mathbb{V}$ as defined in (\cite{BKU}, Definition 3.13) is a measure for the length of the Hodge filtration on the Lie algebra of the adjoint generic Mumford-Tate group.

\begin{thm}[see Theorem \ref{maxatypabssp}]\label{intromaxatypabssp}
Suppose $G_S^{ad}$ is simple and the level of $\mathbb{V}$ is greater or equal to two. Then any maximal atypical special subvariety $Z \subset S$ of positive period dimension is dR-absolutely special.
\end{thm}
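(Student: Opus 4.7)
The plan is to reduce the theorem to Theorem \ref{introspwnfabssp} by showing that under the stated hypotheses, every maximal atypical special subvariety $Z$ of positive period dimension is automatically weakly non-factor. Once this is established, Theorem \ref{introspwnfabssp} directly implies that $Z$ is dR-absolutely special.

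To prove the weakly non-factor property, I would argue by contradiction. Suppose $Z$ is contained in a closed irreducible $Y \subset S$ with $Y \supsetneq Z$ and $H_Z$ a strict normal subgroup of $H_Y$. After replacing $Y$ by its special closure we may assume $Y$ is itself a special subvariety containing $Z$. The crucial input at this point is \cite{BKU}, Proposition 2.4, which is exactly where the hypotheses that $G_S^{ad}$ is simple and that the level of $\mathbb{V}$ is at least $2$ are used. I expect this proposition to ensure that in the above configuration, the atypicality of $Z$ propagates: $Y$ must also be atypical. Morally, the level-$\geq 2$ assumption combined with simplicity of the adjoint group prevents the period domain of $Y$ from splitting off a "typical" factor in which $Z$ could be embedded atypically while $Y$ remains typical.

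Given the atypicality of $Y$, we would then have an atypical special subvariety strictly containing $Z$, contradicting the maximality of $Z$ among atypical special subvarieties of positive period dimension. Hence $Z$ is weakly non-factor, and Theorem \ref{introspwnfabssp} applies to conclude that $Z$ is dR-absolutely special.

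The main obstacle, and the only nontrivial ingredient, is the correct application of \cite{BKU}, Proposition 2.4 to pass from the monodromy-theoretic normality condition defining "weakly non-factor" to the Hodge-theoretic notion of atypicality. Everything else is a formal reduction to the previously established Theorem \ref{introspwnfabssp}; in particular, the hypothesis of positive period dimension is used only insofar as it is needed to apply the BKU result and to ensure that the relevant maximality class is non-degenerate.
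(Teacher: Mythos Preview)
Your reduction strategy---show that $Z$ is weakly non-factor and then apply Theorem \ref{introspwnfabssp}---is a valid plan and can indeed be made rigorous, but your outline contains two genuine gaps.

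First, the step ``after replacing $Y$ by its special closure we may assume $Y$ is itself a special subvariety'' is not harmless: if $Y'$ denotes the special closure of the original $Y$, then while $H_Z \subsetneq H_{Y'}$ persists, there is no reason that $H_Z$ remains \emph{normal} in $H_{Y'}$. You must keep $Y$ and $Y'$ separate in the argument.

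Second, and more seriously, your guess about what \cite{BKU}, Proposition 2.4 says is wrong. It does \emph{not} assert that atypicality propagates from $Z$ to $Y'$. Rather (under $G_S^{ad}$ simple and level $\ge 2$) it says that every \emph{typical} special subvariety has simple adjoint generic Mumford-Tate group. The correct argument is a dichotomy: either $Y'$ is atypical, contradicting the maximality of $Z$; or $Y'$ is typical, in which case $G_{Y'}^{ad}$ is simple. In the latter case, since $G_Y = G_{Y'}$ (special closure) and $H_Y \trianglelefteq G_Y^{der}$ by Andr\'e, simplicity forces $H_Y = G_{Y'}^{der}$; then $H_Z \trianglelefteq H_Y = G_{Y'}^{der}$ together with $H_Z \neq 1$ forces $H_Z = H_Y$, contradicting strictness. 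So your strategy works, but not for the reason you state.

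The paper proceeds differently. It splits into cases: for level $\ge 3$ it invokes \cite{BKU}, Theorem 2.3 (all strict special subvarieties are atypical), so that maximal atypical coincides with maximal strict special of positive period dimension and Corollary \ref{exampleKOU} applies. For level $2$ it does \emph{not} establish weakly non-factor at all; instead it works directly with the definition of dR-absolutely special: if some dR-absolutely special $Y \supsetneq Z$ has $G_Y^{AH} = G_Z^{AH}$, then $Y$ is typical by maximality of $Z$, hence $G_Y^{ad}$ is simple by BKU Proposition 2.4, hence $H_Y$ is simple; the normality $H_Z \trianglelefteq G_Z^{AH} = G_Y^{AH}$ from Proposition \ref{HZisnormalinGAH} then gives $H_Z \trianglelefteq H_Y$, so $H_Z = H_Y$ and $Z = Y$ since $Z$ is weakly special. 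Your route (once corrected) has the merit of avoiding the level case split; the paper's route avoids the bookkeeping with the special closure by using the built-in normality of $H_Z$ in $G_Z^{AH}$.
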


As a generalization of (\cite{UY}, Theorem 1.4) one conjectures (cf. \cite{KlinglerICM}, 4.1) that $\bar{\mathbb{Q}}$-bialgebraic subvarieties in a $\bar{\mathbb{Q}}$-absolute variation of Hodge structure of geometric origin are special (see Definition \ref{defbialg} for a definition of $\bar{\mathbb{Q}}$-bialgebraic subvarieties).
We prove that this is indeed the case for maximal $\bar{\mathbb{Q}}$-bialgebraic subvarieties of positive period dimension in case the adjoint group of $G_S$ is simple.

\begin{thm}[see Theorem \ref{bialgmax}]\label{introbialgmax}
Suppose $G_S^{ad}$ is simple.
Let $Z \subset S$ be a strict maximal $\bar{\mathbb{Q}}$-bialgebraic subvariety of positive period dimension. Then $Z$ is a dR-absolutely special subvariety (and in particular a special subvariety).
\end{thm}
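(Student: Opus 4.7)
The plan is to consider the smallest dR-absolutely special subvariety $W \subset S$ containing $Z$, and to show that $W = Z$. By the very definition of dR-absolutely special, such a $W$ is the unique maximal irreducible closed algebraic subvariety of $S$ containing $Z$ whose generic dR-absolute Mumford-Tate group equals $G_Z^{AH}$; it is automatically dR-absolutely special. The strategy is then to verify that $W$ belongs to the class over which $Z$ is assumed maximal, and conclude by that maximality.

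Two of the required properties of $W$ come almost for free. First, $W$ has positive period dimension, by monotonicity and the assumption on $Z$. Second, $W$ is $\bar{\mathbb{Q}}$-bialgebraic: this follows from the arithmetic properties of dR-absolutely special subvarieties recorded in the paper (they are defined over $\bar{\mathbb{Q}}$ and their Galois translates are again special), combined with the fact that such subvarieties are in particular weakly special, so their image under the period map is algebraic. Granted these, it remains only to show that $W$ is strict; then the maximality hypothesis on $Z$ forces $W = Z$, and the theorem follows.

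Verifying that $W \ne S$ is, I expect, the main obstacle. I would split the analysis according to the smallest complex special subvariety $Z^{\mathrm{sp}}$ of $S$ containing $Z$. If $Z^{\mathrm{sp}}$ is strict, let $M$ be the maximal strict special subvariety of positive period dimension containing $Z^{\mathrm{sp}}$. Corollary \ref{intromaxspabssp}, whose hypothesis $G_S^{ad}$ simple is exactly the one we have, then shows that $M$ is dR-absolutely special. Since $Z \subset M$, the minimality of $W$ yields $W \subset M$, and so $W$ is strict, as desired. In the remaining case $Z^{\mathrm{sp}} = S$, i.e.\ $Z$ is Hodge-generic in $S$, I would appeal to the Ax--Schanuel theorem for variations of Hodge structure to force a $\bar{\mathbb{Q}}$-bialgebraic subvariety to be weakly special; but a Hodge-generic weakly special subvariety of $S$ must equal $S$ itself, contradicting the strictness of $Z$. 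This excludes the Hodge-generic case and closes the argument.

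The main technical obstacle lies in this last Hodge-generic reduction: executing it carefully requires combining the precise definition of $\bar{\mathbb{Q}}$-bialgebraicity of Definition \ref{defbialg} with the exact form of Ax--Schanuel available in this setting, and checking that the reduction is compatible with the monodromy-theoretic input used elsewhere in the paper (in particular, with Proposition \ref{intronogroup} and the simplicity assumption on $G_S^{ad}$, which underpin Corollary \ref{intromaxspabssp}).
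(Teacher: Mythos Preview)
Your overall frame matches the paper's: take a dR-absolutely special $W \supset Z$ with $G_W^{AH}=G_Z^{AH}$, note via Proposition~\ref{biabssp} that $W$ is $\bar{\mathbb{Q}}$-bialgebraic, and invoke the maximality of $Z$ once $W\neq S$ is known. (One small correction: such a $W$ is neither unique nor ``smallest'' among dR-absolutely special subvarieties containing $Z$; the paper simply takes any one, as in Proposition~\ref{absspperiod}.)

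The difference is in how $W=S$ is excluded. The paper does this in one stroke, with no case split and no appeal to Corollary~\ref{exampleKOU}: by Proposition~\ref{bialgws}, a $\bar{\mathbb{Q}}$-bialgebraic subvariety is in particular bialgebraic, hence \emph{weakly special}. If $W=S$ then $G_Z^{AH}=G_S^{AH}$, so $H_Z \unlhd G_S^{AH}$ by Proposition~\ref{HZisnormalinGAH}, hence $H_Z \unlhd H_S$; simplicity of $G_S^{ad}$ together with $H_Z\neq 1$ forces $H_Z=H_S$, and weak specialness of $Z$ then gives $Z=S$, contradicting strictness. Your Case~2 is exactly this argument, but you restrict it to the Hodge-generic situation; since Proposition~\ref{bialgws} requires no such hypothesis, the detour through Case~1 is unnecessary.

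That detour also contains a genuine soft spot. The step ``the minimality of $W$ yields $W\subset M$'' is not justified: $W$ is not minimal in any sense that guarantees this containment, and in general $W$ only lies in \emph{some} irreducible component of $\Phi^{-1}\bigl(\iota(\Gamma_M^{AH}\backslash\mathcal{D}_M^{AH})\bigr)$, which need not be $M$ itself. One can still salvage $W\neq S$ from this (all such components are strict since $G_M^{AH}\subsetneq G_S^{AH}$), but the argument as written does not go through. The paper's direct use of weak specialness of $Z$ avoids this issue entirely and is both shorter and cleaner.
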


We define a class of motivic variations of Hodge structure as those absolute variations which look as if they come from geometry.

\begin{mydef}
An absolute variation of Hodge structure $(\mathbb{V}^{\sigma})_{\sigma}$
on $S$ is called a \emph{motivic variation of Hodge structure} if there exists a dense open subvariety $U \subset S$ such that for all points $x \in U(\mathbb{C})$, the collection of Hodge structures $(\mathbb{V}_{x^{\sigma}}^{ \sigma})_{\sigma} $ together with their comparison isomorphisms is the realization of a motive for dR-absolute Hodge cycles.
A $\bar{\mathbb{Q}}$-absolute variation of Hodge structure with this property is called $\bar{\mathbb{Q}}$-\emph{motivic} variation of Hodge structure.
\end{mydef}
\begin{rem}
We refer to (\cite{DM}, §6) for the construction of the category of motives for absolute Hodge cycles. Again, we consider a variant of it, using only dR-absolute Hodge cycles. Every absolute variation of geometric origin is an example of such a motivic variation.
\end{rem}

It is reasonable to expect that Deligne's Conjecture \ref{conjdel1}, and therefore also Conjecture \ref{conjabssp} should hold for subvarieties in $\bar{\mathbb{Q}}$-motivic variations.
As a final application, we use monodromy arguments to show that these conjectures for $\bar{\mathbb{Q}}$-absolute variations of geometric origin can be reduced to the case of special points in $\bar{\mathbb{Q}}$-motivic variations.

\begin{thm}[see Theorem \ref{reducepoints}] \label{introreducepoints}
\textnormal{ }
\begin{enumerate}[(i)]
\item \label{introreducedeligne}
Suppose Deligne's conjecture \ref{conjdel1} holds for special points in $\bar{\mathbb{Q}}$-motivic variations. Then it holds for all subvarieties in $\bar{\mathbb{Q}}$-absolute variations of geometric origin.
\item \label{introreduceabssp}
Suppose that Conjecture \ref{conjabssp} holds for special points in $\bar{\mathbb{Q}}$-motivic variations. Then it holds for all special subvarieties in $\bar{\mathbb{Q}}$-absolute variations of geometric origin.
\item \label{introfodgalconj}
Suppose that special points in $\bar{\mathbb{Q}}$-motivic variations are defined over $\bar{\mathbb{Q}}$ and all their Galois conjugates are special. Then the same holds for all special subvarieties in $\bar{\mathbb{Q}}$-absolute variations of geometric origin.
\item \label{introreducebialg}
Suppose that all $\bar{\mathbb{Q}}$-bialgebraic points in $\bar{\mathbb{Q}}$-motivic variations are dR-absolutely special. Then all $\bar{\mathbb{Q}}$-bialgebraic subvarieties in $\bar{\mathbb{Q}}$-absolute variations of geometric origin are dR-absolutely special.
\end{enumerate}
\end{thm}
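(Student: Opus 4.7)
The plan is a fiberwise reduction. All four parts ultimately follow from part (i) combined with consequences already in the paper, so the main content lies there; I sketch the argument for (i) in detail and then indicate how (ii)--(iv) follow.

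\textbf{Step 1: reduce to a single fiber.} Let $\alpha \in \mathbb{V}_s^\otimes$ be a generic Hodge tensor on a closed irreducible $Z \subset S$, where $(\mathbb{V}^\sigma)_\sigma$ is a $\bar{\mathbb{Q}}$-absolute variation of geometric origin. Since $\alpha$ is $G_Z$-invariant and $H_Z \subset G_Z$, the Theorem of the Fixed Part extends $\alpha$ to a flat section $\widetilde{\alpha}$ of $\mathbb{V}^\otimes|_Z$ which is a Hodge class at every point of $Z$. The notion of dR-absoluteness is defined at a single point, so to prove $\alpha$ is dR-absolute it suffices to exhibit any convenient $s \in Z$ at which $\widetilde{\alpha}_s$ can be verified to be dR-absolute in $\mathbb{V}_s^\otimes$.

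\textbf{Step 2: apply the hypothesis at a $\bar{\mathbb{Q}}$-rational fiber.} By a descent argument using the countability of the Hodge locus and Cattani--Deligne--Kaplan algebraicity, one may assume $Z$ is defined over $\bar{\mathbb{Q}}$, possibly after passing to a finite $\mathrm{Gal}(\bar{\mathbb{Q}}/\mathbb{Q})$-stable union of components chosen so that the generic Mumford--Tate group is preserved up to conjugacy. Then $Z(\bar{\mathbb{Q}})$ is Zariski-dense, and the Hodge-generic locus is the complement of a countable union of proper subvarieties; choose $s$ in their intersection. Because $\mathbb{V}$ is of geometric origin and $s \in S(\bar{\mathbb{Q}})$, the fiber $\mathbb{V}_s$ together with its $\sigma$-conjugates and their de Rham comparisons is the realization of the motive of the $\bar{\mathbb{Q}}$-variety $X_s$; regarded as the constant variation on the single-point base $\{s\}$, it is a $\bar{\mathbb{Q}}$-motivic variation of Hodge structure, and $\{s\}$ is tautologically a special subvariety of this base. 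The hypothesis in (i) then yields that every Hodge tensor in $\mathbb{V}_s^\otimes$ is dR-absolute; in particular $\widetilde{\alpha}_s$ is, which completes (i).

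\textbf{Step 3: the other parts.} Part (ii) is immediate from (i): if $Z$ is special with generic Mumford--Tate group $G_Z$, then (i) gives $G_Z = G_Z^{AH}$, so $Z$ is dR-absolutely special. Part (iii) follows from (ii) together with the arithmetic properties of dR-absolutely special subvarieties recalled just after Definition \ref{introdefabssp}. For part (iv), the same fiberwise reduction applies: choose a Hodge-generic $s \in Z(\bar{\mathbb{Q}})$ which is itself a $\bar{\mathbb{Q}}$-bialgebraic point of the restricted $\bar{\mathbb{Q}}$-motivic variation $\mathbb{V}|_Z$; the hypothesis produces dR-absolute speciality of $\{s\}$, and combining with Proposition \ref{intronogroup} and the flatness of $\widetilde{\alpha}$ one transfers this property back to $Z$.

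\textbf{Main obstacle.} The most delicate point is the descent to $Z$ defined over $\bar{\mathbb{Q}}$ in Step 2. Without Deligne's conjecture in advance, the individual irreducible components of the Hodge locus are not known to be $\bar{\mathbb{Q}}$-algebraic, only their finite Galois orbits. Enlarging $Z$ to such a Galois-stable union may in principle change the generic Mumford--Tate group, and the argument must ensure that this enlargement is controlled: one uses that Galois permutes components of the Hodge locus by conjugating the defining Hodge tensors, so that the generic Mumford--Tate group on each component is conjugate to $G_Z$. Arranging this uniformly, together with the verification that the resulting single-point base fiber is genuinely $\bar{\mathbb{Q}}$-motivic in the sense of the paper, is where the bulk of the technical work lies.
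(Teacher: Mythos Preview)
There are two serious gaps.

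\textbf{Step 3 is a logical error.} Parts (i)--(iv) each carry their \emph{own} hypothesis, and these are not equivalent: the hypothesis of (ii) (Conjecture~\ref{conjabssp} for special points) is strictly weaker than that of (i) (Conjecture~\ref{conjdel1} for special points), so having established the conclusion of (i) under \emph{its} hypothesis tells you nothing about (ii) under \emph{its} hypothesis. The same applies to (iii) and (iv). Each part must be proved separately under its own assumption; the paper does this with one uniform construction applied four times.

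\textbf{Step 2 is circular, already for (i).} For the single-point base $\{s\}$ to carry a $\bar{\mathbb{Q}}$-motivic variation you need $s\in S(\bar{\mathbb{Q}})$. But an arbitrary special $Z$ need not contain any $\bar{\mathbb{Q}}$-point --- indeed when $Z$ is itself a (possibly transcendental) special point there is nothing to choose, and whether such a point lies in $S(\bar{\mathbb{Q}})$ is precisely the content of (iii). Passing to the $\bar{\mathbb{Q}}$-Zariski closure can strictly enlarge $G_Z$ (for a transcendental special point the closure may well be all of $S$), so the tensor $\alpha$ you are tracking is lost. You correctly flag this as the main obstacle, but the workaround you sketch does not resolve it.

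The paper's route is entirely different and bypasses this circularity. After replacing $S$ so that $G_S^{AH}=G_Z^{AH}$ (hence $H_Z\unlhd G_S^{AH}$), one forms the $H_Z$-invariant variation $\mathbb{V}'=\bigoplus_i(\mathbb{V}^{\otimes(a_i,b_i)})^{H_Z}$, whose period map $\Phi'$ contracts $Z$ to a point. Stein-factorizing $\Phi'$ produces an algebraic morphism $f:S\to B$ with connected fibers followed by a quasi-finite period map on $B$. Because the idempotent cutting out $\mathbb{V}'$ is dR-absolute Hodge, $f$ is defined over $\bar{\mathbb{Q}}$ and $\mathbb{V}'$ descends to a $\bar{\mathbb{Q}}$-motivic variation on $B$. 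Then $x=f(Z)$ is a special \emph{point} in this new variation --- no $\bar{\mathbb{Q}}$-point of $Z$ is ever needed --- and each hypothesis (i)--(iv) is applied to $x$ directly, the conclusion transported back to $Z$ via $G_Z=H_Z\cdot G'_Z$ and Proposition~\ref{AHquotient}. Constructing this auxiliary base $B$ on which $Z$ becomes a point is the essential idea missing from your proposal.
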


\begin{rem}
The part of Theorem \ref{introreducepoints}(\ref{introfodgalconj}) concerning definability over $\bar{\mathbb{Q}}$ was already proven in (\cite{KOU}, Corollary 1.14), but the part on Galois conjugates was left open, cf. (\cite{KOU}, Remark 3.5).
\end{rem}

\begin{notations}

All varieties in this paper are assumed to be reduced.
Unless stated otherwise, by a subvariety $Z$ of a complex algebraic variety $S$ we mean a closed irreducible complex algebraic subvariety.
If $S$ is defined over a number field $K \subset \mathbb{C}$ and $Z$ is defined over an extension $L $ of $K$, we denote by $Z_L$ the associated $L$-variety.
If a distinction is necessary, we will refer to $Z$ considered as a complex variety by $Z_{\mathbb{C}} = Z_L \otimes_L \mathbb{C}$.
Throughout this paper, by a Hodge cycle we mean a $(0,0)$-cycle.
\end{notations}
\begin{ac}
The author would like to thank Bruno Klingler for many helpful discussions and comments.
\end{ac}

\section{Variations of Hodge structure and special subvarieties}

In this section we recall fundamental facts from Hodge theory that will be used throughout the paper.
Let $S$ be a smooth irreducible quasi-projective algebraic variety over $\mathbb{C}$, and $\mathbb{V}_{\mathbb{Z}}$ a (pure, polarizable) $\mathbb{Z}$-variation of Hodge structure on $S$. Denote by $\mathbb{V}:= \mathbb{V}_{\mathbb{Z}} \otimes_{\mathbb{Z}} \mathbb{Q}$ the associated
$\mathbb{Q}$-variation of Hodge structure.

\subsection{Hodge varieties and special subvarieties}

At any point $s \in S(\mathbb{C})$ the fiber $\mathbb{V}_s$ carries a polarizable Hodge structure.
For every $m,n \ge 0$ we define the tensor Hodge structure
$\mathbb{V}_{s}^{\otimes (m,n)} := \mathbb{V}_s^{\otimes m} \otimes \mathbb{V}_s^{\vee \otimes n}$ and $\mathbb{V}_s^{\otimes} := \bigoplus_{m,n \ge 0} \mathbb{V}_s^{\otimes (m,n)}$.

To a closed irreducible algebraic subvariety $Z \subset S$ one can attach an important Hodge-theoretic invariant: its generic Mumford-Tate group with respect to the variation $\mathbb{V}$.
After choosing a point $s \in Z(\mathbb{C})$, let $\mathcal{H}_Z \subset \mathbb{V}_s^{\otimes} $ be the subspace of those $v \in \mathbb{V}_s^{\otimes}$ satisfying the condition that every parallel transport of $v$ is a Hodge cycle at every point of $Z$.

\begin{mydef}
The generic Mumford-Tate group $G_Z$ of $Z$ is defined to be
the subgroup of $GL(\mathbb{V}_s)$ fixing the tensors in $\mathcal{H}_Z$.
\end{mydef}

\begin{rem}\textnormal{ }
\begin{enumerate}[(i)]
\item
A priori, our definition of $G_Z$ depends on the choice of $s \in Z(\mathbb{C})$. However, as we vary the point $s$, the associated groups form a local system of algebraic groups over $Z$, which shows that the group $G_Z$ is independent of the choice of $s$ up to monodromy.
In fact, for $s\in Z(\mathbb{C})$ away from a countable union of closed algebraic subvarieties we can identify $G_Z$ with the Mumford-Tate group of the Hodge structure $\mathbb{V}_s$.
From this we see that $G_Z$ is a connected reductive group over $\mathbb{Q}$.
\item
The generic Mumford-Tate group $G_Z$ is often defined by restricting to the smooth locus of $Z$. We show in Remark \ref{smoothpart} that our definition gives the same group.
\end{enumerate}\end{rem}

An important feature of variations of Hodge structure is the fact that they have a geometric interpretation in terms of period maps:
to the variation of Hodge structure $\mathbb{V}$ one can naturally attach a holomorphic period map $$\Phi: S^{an} \to \Gamma \backslash \mathcal{D}$$ with target a so-called Hodge variety.
We now recall the most important aspects of this notion.

\begin{mydef}[\cite{Klingleratypical}, 3.1]
\textnormal{ }
\begin{enumerate}[(i)]
\item A \emph{(connected) Hodge datum} is a pair $(G, \mathcal{D})$ consisting of a reductive group $G$ over $\mathbb{Q}$ and a connected component $\mathcal{D}$ of the $G(\mathbb{R})$-conjugacy class of a Hodge cocharacter $h: \mathbb{S} \to G_{\mathbb{R}}$ satisfying the following conditions:
\begin{enumerate}
\item
the weight homomorphism $w: \mathbb{G}_{m, \mathbb{R}} \subset \mathbb{S} \overset{h}{\to} G_{\mathbb{R}}$ is defined over $\mathbb{Q}$ and factors through the center of $G$,
\item
the involution induced by $h(i)$ is a Cartan involution of $G_{\mathbb{R}}^{ad}$.
\end{enumerate}
\item
A \emph{morphism of Hodge data} $(G, \mathcal{D}) \to (G', \mathcal{D}')$ is a morphism of reductive groups $G \to G'$ such that $\mathcal{D}$ maps to $\mathcal{D}'$.

\end{enumerate}
\end{mydef}

\begin{mydef}[\cite{Klingleratypical}, 3.3]
\textnormal{ }
\begin{enumerate}[(i)]
\item A \emph{(connected) Hodge variety} attached to the Hodge datum $(G, \mathcal{D})$ is a quotient $\Gamma\backslash \mathcal{D} $ of $\mathcal{D}$ by an arithmetic subgroup $\Gamma \subset G(\mathbb{Q})$. It is naturally a complex analytic space, and becomes a complex manifold after possibly replacing $\Gamma$ by a finite index subgroup.
\item
A \emph{morphism of Hodge varieties} $\Gamma\backslash \mathcal{D} \to \Gamma' \backslash \mathcal{D}'$ is a morphism induced by a morphism of Hodge data $(G, \mathcal{D}) \to (G', \mathcal{D}')$ under which $\Gamma$ gets mapped to $\Gamma'$.
\item
A \emph{special subvariety} of a Hodge variety $\Gamma\backslash \mathcal{D}$ is the image of a morphism of Hodge varieties.
\end{enumerate}
\end{mydef}

From a variation of Hodge structure $\mathbb{V}$, one can construct (after possibly replacing $S$ by a finite étale cover) a period map

$$\Phi: S^{an} \to X:= \Gamma\backslash \mathcal{D},$$
where $(G_S, \mathcal{D})$ is the generic Hodge datum on $S$.

Let $Z \subset S$ be a closed irreducible subvariety. The inclusion $G_Z \subset G_S$ gives rise to a morphism of Hodge data $(G_Z, \mathcal{D}_{Z}) \to (G_S, \mathcal{D})$, and the image $X_{Z}$ is a special subvariety of $X$.
Then $\Phi(Z^{an}) \subset X_{Z}$ and $X_{Z}$ is the smallest special subvariety of $X$ containing $\Phi(Z^{an})$.
\begin{rem}\label{smoothpart}
We can now justify our definition of the generic Mumford-Tate group in the following way:
Let $G_{Z^{sm}}$ be the generic Mumford-Tate group in restriction to the smooth locus $Z^{sm}$ of $Z$ and $X_{Z^{sm}}$ the associated special subvariety of $X$.
Then $$\Phi(Z^{sm,an}) \subset X_{Z^{sm}}$$ and since the smooth locus is dense in $Z^{an}$ (even for the analytic topology) and $X_{{Z^{sm}}}$ is a closed analytic subspace of $X$, we get
$$\Phi(Z^{an}) \subset X_{Z^{sm}}.$$
From this we see that every generic Hodge tensor on $Z^{sm}$ extends to $Z$, i.e. $G_Z = G_{Z^{sm}}$.
\end{rem}

The subvarieties of $S$ which are maximal with a given generic Mumford-Tate group are of particular Hodge-theoretic importance.

\begin{mydef}[\cite{KO}, Definition 1.2] \label{definitionspecial1}
A closed irreducible algebraic subvariety $Z \subset S$ is called \emph{special} for $\mathbb{V}$ if it is maximal among the closed irreducible algebraic subvarieties of $S$ having the same generic Mumford-Tate group as $Z$.
\end{mydef}

A famous result of Cattani-Deligne-Kaplan (and recently reproved using o-minimal methods by Bakker-Klingler-Tsimerman in \cite{BKT}) describes the special subvarieties of $S$ using the period map.

\begin{thm}[\cite{CDK}, Corollary 1.3]
The special subvarieties of $S$ are precisely the irreducible components of the preimages of special subvarieties in $X$ under the period map.
\end{thm}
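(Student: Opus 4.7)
The plan is to establish both inclusions in the claimed equality, reducing the difficult direction to the algebraicity of the Hodge locus as proved by Cattani-Deligne-Kaplan.

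For the easier inclusion, suppose $Y \subset S$ is an irreducible component of $\Phi^{-1}(X')$ for some special subvariety $X' \subset X$ attached to a sub-Hodge datum $(G', \mathcal{D}') \hookrightarrow (G_S, \mathcal{D})$. At a very general point $y \in Y$, the Mumford-Tate group of $\mathbb{V}_y$ is contained in $G'$, so the generic Mumford-Tate group $G_Y$ satisfies $G_Y \subset G'$, and the induced special subvariety $X_Y$ is contained in $X'$. It follows that $Y \subset \Phi^{-1}(X_Y) \subset \Phi^{-1}(X')$, and since $Y$ is an irreducible component of the larger preimage, it is a fortiori an irreducible component of $\Phi^{-1}(X_Y)$. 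If some strictly larger irreducible subvariety $Y' \supsetneq Y$ satisfied $G_{Y'} = G_Y$, then $\Phi(Y') \subset X_{Y'} = X_Y$, giving $Y' \subset \Phi^{-1}(X_Y)$, which would contradict the maximality of $Y$ as an irreducible component. Hence $Y$ is special.

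For the converse, let $Z \subset S$ be special with generic Mumford-Tate group $G_Z$ and associated special subvariety $X_Z \subset X$. The inclusion $\Phi(Z^{an}) \subset X_Z$ from the discussion above yields $Z \subset \Phi^{-1}(X_Z)$. By the construction of the Hodge datum $(G_Z, \mathcal{D}_Z)$, a point $s \in S$ lies in $\Phi^{-1}(X_Z)$ exactly when parallel transports of the tensors in $\mathcal{H}_Z$ remain Hodge at $s$, so $\Phi^{-1}(X_Z)$ is the Hodge locus attached to $\mathcal{H}_Z$. Invoking the Cattani-Deligne-Kaplan algebraicity theorem, the irreducible (analytic) components of $\Phi^{-1}(X_Z)$ are closed algebraic subvarieties of $S$. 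Let $Y$ denote the irreducible component containing $Z$. At a very general point of $Y$ the generic Mumford-Tate group $G_Y$ fixes the tensors in $\mathcal{H}_Z$, so $G_Y \subset G_Z$, while the inclusion $Z \subset Y$ gives $\mathcal{H}_Y \subset \mathcal{H}_Z$ and therefore $G_Z \subset G_Y$. The two groups agree, and the maximality in the definition of a special subvariety forces $Y = Z$, exhibiting $Z$ as an irreducible component of $\Phi^{-1}(X_Z)$.

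The main obstacle is, of course, the algebraicity statement for the Hodge locus; here I would follow the strategy of Cattani-Deligne-Kaplan \cite{CDK}, working locally near the boundary of a smooth compactification of $S$ and using the nilpotent orbit and $SL_2$-orbit theorems to control the asymptotics of the Hodge filtration, forcing the analytic closure of the Hodge locus to be a complex analytic subset of the compactification, hence algebraic by GAGA. Alternatively one may use the o-minimal approach of Bakker-Klingler-Tsimerman \cite{BKT}, presenting the period map as a definable map in $\mathbb{R}_{\mathrm{an,exp}}$ and concluding by the definable Chow theorem. Either way, the heavy lifting is entirely in the algebraicity; the rest of the argument, as sketched above, is a direct unwinding of the definitions.
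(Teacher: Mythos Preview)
The paper does not give its own proof of this statement; it is quoted as an external result of Cattani--Deligne--Kaplan (with the alternative o-minimal proof of \cite{BKT} also mentioned). Your proposal correctly identifies that the entire content lies in the algebraicity of the Hodge locus, and your two-direction reduction to that algebraicity is the standard one and is correct.

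One small point of presentation: what you call the ``easier inclusion'' also requires the algebraicity theorem. An analytic irreducible component $Y$ of $\Phi^{-1}(X')$ is a priori only a closed analytic subset; to speak of its generic Mumford--Tate group in the sense of Definition~\ref{definitionspecial1} and to conclude that $Y$ is \emph{special} (an algebraic notion), you already need to know that $Y$ is algebraic, which is again Cattani--Deligne--Kaplan. Your final paragraph makes clear you understand that algebraicity is the only nontrivial input, but the labelling of the first direction as not needing it is slightly misleading. Once algebraicity is granted, both directions are, as you say, a direct unwinding of the definitions.
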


\subsection{Monodromy}

When we forget about the Hodge filtration and just consider the underlying local system of the variation, we enter the world of monodromy.

\begin{mydef}\label{algmon}
The \emph{algebraic monodromy group} $H_Z$ of $Z$ is defined to be the identity component of the
Zariski closure of the image of the monodromy representation $\rho_Z: \pi_{1}(Z^{an},s) \to GL(\mathbb{V}_s)$.
\end{mydef}

Note that our subvariety $Z \subset S$ will often be singular, and that usually the fundamental group of a singular analytic space is not particularly well-behaved. In our case however, we will use that $Z$ is a closed subvariety of the smooth variety $S$ and the local system $\mathbb{V}$ moreover supports a variation of Hodge structure. In this situation we see that in the definition of the algebraic monodromy group $H_Z$ we may always replace $Z$ by its smooth locus.

\begin{lem}\label{monsm}
Let $Z \subset S$ be a closed irreducible subvariety and let $Z^{sm}$ be the smooth locus of $Z$. Then $H_Z = H_{Z^{sm}}$, i.e. every global section of some $\mathbb{V}^{\otimes (m,n)}$ over a finite étale cover of $Z^{sm}$ extends uniquely to a global section over a finite étale cover of $Z$.
\end{lem}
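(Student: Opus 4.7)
The proof plan rests on one topological input: that the open inclusion $j: Z^{sm} \hookrightarrow Z$ induces a surjection
$$j_*: \pi_1(Z^{sm, an}, s) \twoheadrightarrow \pi_1(Z^{an}, s)$$
on analytic fundamental groups. This holds because the singular locus $Z^{sing}$ is a complex algebraic subvariety of $Z$ of complex codimension at least one, hence real codimension at least two in $Z^{an}$. Choosing a semialgebraic triangulation of $Z^{an}$ compatible with $Z^{sing}$, any loop based at $s \in Z^{sm}(\mathbb{C})$ can be perturbed by a standard general-position argument, relative to the basepoint, into $Z^{sm, an}$, exhibiting every element of the target as coming from the source. (Alternatively, one can factor through a resolution of singularities $\tilde{Z} \to Z$ and combine surjectivity on $\pi_1$ for the removal of a divisor from a smooth variety with surjectivity on $\pi_1$ for a proper surjective morphism with connected fibers.)

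Once the surjection is available, $H_{Z^{sm}} = H_Z$ is essentially immediate. The restriction of $\mathbb{V}$ to $Z^{an}$ pulls back via $j$ to $\mathbb{V}|_{Z^{sm,an}}$, so the monodromy representation factors as
$$\rho_{Z^{sm}}: \pi_1(Z^{sm, an}, s) \xrightarrow{j_*} \pi_1(Z^{an}, s) \xrightarrow{\rho_Z} GL(\mathbb{V}_s).$$
Surjectivity of $j_*$ forces $\mathrm{Im}(\rho_{Z^{sm}}) = \mathrm{Im}(\rho_Z)$ inside $GL(\mathbb{V}_s)$, and taking Zariski closures and identity components yields $H_{Z^{sm}} = H_Z$.

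The equivalent reformulation in terms of sections follows by the standard dictionary between flat sections on finite étale covers and vectors fixed by finite-index subgroups of the fundamental group. A global section of $\mathbb{V}^{\otimes (m,n)}$ on a finite étale cover of $Z^{sm}$ corresponds to a finite-index subgroup $\Gamma \subset \pi_1(Z^{sm, an}, s)$ together with a vector $v \in \mathbb{V}_s^{\otimes (m,n)}$ fixed by $\rho_{Z^{sm}}(\Gamma)$. By surjectivity of $j_*$ the image $j_*(\Gamma)$ has finite index in $\pi_1(Z^{an}, s)$, and the factorisation above gives $\rho_Z(j_*(\Gamma)) = \rho_{Z^{sm}}(\Gamma)$, which still fixes $v$. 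This produces a finite étale cover of $Z$ carrying a global section extending the original one; uniqueness is automatic since a flat section is determined by its value at $s$. The only step requiring genuine content is the surjectivity of $j_*$; everything else is formal bookkeeping.
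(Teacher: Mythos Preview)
Your argument rests on the surjectivity of $j_*: \pi_1(Z^{sm,an},s) \to \pi_1(Z^{an},s)$, and this is simply false when $Z$ is not normal. Take $Z$ a projective nodal cubic. Its normalization identifies two points of $\mathbb{P}^1$, so $Z^{an}$ is $S^2$ with two points glued, homotopy equivalent to $S^2 \vee S^1$, and $\pi_1(Z^{an}) \cong \mathbb{Z}$ is generated by a loop passing through the node from one local branch to the other. On the other hand $Z^{sm} \cong \mathbb{P}^1 \setminus \{\text{two points}\} \cong \mathbb{C}^*$, so $\pi_1(Z^{sm,an}) \cong \mathbb{Z}$, generated by a small circle around the node lying entirely on one branch; that circle bounds a disk in $Z^{an}$ (the branch with the node filled back in), so $j_*$ is the \emph{zero} map. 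Your general-position argument breaks exactly here: near the node $Z^{an}$ is two disks meeting at a point, not a manifold, and a loop crossing between the branches cannot be perturbed off the node while remaining in $Z^{an}$. Your alternative via resolution fails for the same reason: a resolution of a non-normal variety need not have connected fibers (the normalization $\mathbb{P}^1 \to Z$ has two points over the node), so the step ``$\pi_1(\tilde Z) \twoheadrightarrow \pi_1(Z)$ by proper surjective with connected fibers'' is unjustified.

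This is why the paper does not argue topologically, and why the paragraph preceding the lemma stresses that $\mathbb{V}$ underlies a polarizable variation of Hodge structure. The paper's proof uses Andr\'e's theorem to get $H_{Z^{sm}} \trianglelefteq G_Z$, then shows that the period map, composed with the projection to the Hodge variety for $G_Z/H_{Z^{sm}}$, is constant on $Z^{sm}$ and hence on all of $Z$ by density. Equivariance of the lifted period map then forces the image of $\rho_Z(\pi_1(Z^{an}))$ in $G_Z(\mathbb{Q})/H_{Z^{sm}}(\mathbb{Q})$ to lie in a point stabilizer on the Mumford--Tate domain, which is compact; intersected with an arithmetic group this is finite. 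That finite-index statement is precisely what replaces the surjectivity you claimed, and it genuinely requires the Hodge structure: for an arbitrary local system on $S$ restricted to a nodal rational curve $Z$, nothing topological prevents $H_{Z^{sm}}=1$ while $H_Z$ is positive-dimensional.
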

\begin{proof}
Let $$\iota: X_{G_Z}:= \Gamma_Z \backslash \mathcal{D}_{Z} \to X= \Gamma \backslash \mathcal{D}$$ denote the morphism of Hodge varieties induced from the morphism of Hodge data $(G_Z, \mathcal{D}_Z) \to (G_S, \mathcal{D})$.
By (\cite{Andre}, Theorem 1) the group $H_{Z^{sm}}$ is a normal subgroup of $G_{Z^{sm}}= G_Z$. The projection to the quotient gives rise to a morphism of Hodge data $$(G_{Z}, \mathcal{D}_{Z}) \to (G_{Z}/H_{Z^{sm}}, \overline{\mathcal{D}}_{Z}),$$
and we denote the associated morphism of Hodge varieties by
$$\pi: X_{G_Z} = \Gamma_Z \backslash \mathcal{D}_{Z} \to X_{G_Z/H_{Z^{sm}}} := \overline{\Gamma}_Z \backslash \overline{\mathcal{D}}_Z.$$

After possibly replacing $Z^{sm}$ by a finite étale cover, we get a period map $\Phi': Z^{sm,an} \to X_{G_Z}$.
By (\cite{KO}, Lemma 4.12) the projection of $\Phi'(Z^{sm,an})$ to the quotient Hodge variety $$X_{G_Z/H_{Z^{sm}}} = \overline{\Gamma}_Z \backslash \overline{\mathcal{D}}_Z$$ is a single point $y$.
Denoting by $$X_{H_{Z^{sm}},y} = \pi^{-1}(\{y\})$$ the preimage of $y$ in $X_{G_Z} $, the smooth locus of
$Z$ is thus contained in the preimage of $\iota(X_{H_{Z^{sm}},y})$ under $\Phi$.
Now since $Z^{sm}$ is dense in $Z$, we deduce that also $Z \subset \Phi^{-1}(\iota(X_{H_{Z^{sm}},y}))$.

Since $Z$ is irreducible, under the lifted period map
\begin{equation}\label{liftedperiodmap} \tilde{\Phi}: \tilde{S} \to \mathcal{D} \end{equation}
for the universal cover $\tilde{S}$ of $S$, the universal cover $\tilde{Z}$ is mapped to $\mathcal{D}_Z$ and its projection to $\overline{\mathcal{D}}_Z$ is a single point $\tilde{y} \in \overline{\mathcal{D}}_Z$.
The lifted period map (\ref{liftedperiodmap}) is $\pi_1(S^{an},s)$-equivariant. This shows that up to a finite subgroup, the image of the monodromy representation $\rho_Z: \pi_1(Z^{an},s) \to GL(\mathbb{V}_s)$
is contained in $G_Z(\mathbb{Q})$, and the projection
\begin{equation} \label{monodromyprojection}\pi_1(Z^{an},s) \to G_Z(\mathbb{Q}) \twoheadrightarrow G_Z(\mathbb{Q}) / H_{Z^{sm}}(\mathbb{Q})\end{equation}
is contained in the stabilizer of the point $\tilde{y}$.
Since this stabilizer is a compact real Lie group, we conclude that the image of (\ref{monodromyprojection}) is finite. Therefore a finite index subgroup of $\rho_Z(\pi_1(Z^{an},s))$ is contained in $H_{Z^{sm}}$, which shows that $H_Z = H_{Z^{sm}}$.
\end{proof}

The following Theorem is a well-known result due to André (in the smooth case, but we can easily reduce to that case using Lemma \ref{monsm} and Remark \ref{smoothpart}):
\begin{prop}[\cite{Andre}, Theorem 1]\label{normal1}
The algebraic monodromy group $H_Z$ is a normal subgroup of the derived group of the generic Mumford-Tate group $G_Z$.
\end{prop}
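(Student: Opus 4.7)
The plan is to reduce the statement to the well-known smooth case of André's theorem, which is applicable directly when $Z$ is assumed to be smooth. The two reduction tools have already been established in the preceding discussion: Remark \ref{smoothpart} identifies the generic Mumford-Tate group of $Z$ with that of its smooth locus, $G_Z = G_{Z^{sm}}$, and Lemma \ref{monsm} identifies the algebraic monodromy groups, $H_Z = H_{Z^{sm}}$.

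Concretely, I would proceed as follows. First, pass to the smooth locus $Z^{sm} \subset Z$, which is a smooth (not necessarily connected, but irreducible after restricting the variation) quasi-projective subvariety of $S$ carrying the restricted variation of Hodge structure $\mathbb{V}|_{Z^{sm}}$. André's theorem, as stated in \cite{Andre}, Theorem 1, applies in this smooth setting and yields that the algebraic monodromy group $H_{Z^{sm}}$ is a normal subgroup of the derived group $G_{Z^{sm}}^{der}$. Second, combine this with the two identifications above: Remark \ref{smoothpart} gives $G_Z^{der} = G_{Z^{sm}}^{der}$, and Lemma \ref{monsm} gives $H_Z = H_{Z^{sm}}$. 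Substituting these equalities into André's conclusion yields that $H_Z$ is a normal subgroup of $G_Z^{der}$, which is exactly the statement.

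There is no real obstacle here; the substance of the argument is contained in Lemma \ref{monsm} (where the Hodge-theoretic fact that $Z$ is contained in the same fiber of the quotient period map as $Z^{sm}$ allows one to identify the algebraic monodromy groups) and in Remark \ref{smoothpart} (where the density of $Z^{sm}$ in $Z^{an}$ combined with the closedness of the Mumford-Tate special subvariety gives the identification of the generic Mumford-Tate groups). The only subtlety worth flagging is that both identifications require the density of $Z^{sm}$ in $Z^{an}$ in the analytic topology, together with the fact that the Hodge-theoretic loci in play (special subvarieties of the ambient Hodge variety, respectively fibers of a quotient period map) are closed analytic subsets; these points have already been verified in the excerpt. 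Given this, the proof reduces to a one-line substitution into the smooth case of André's result.
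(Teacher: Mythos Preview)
Your proposal is correct and matches the paper's own approach exactly: the paper does not give a formal proof but simply notes before the statement that André's result holds in the smooth case and that one reduces to it using Lemma \ref{monsm} and Remark \ref{smoothpart}. Your write-up is just a spelled-out version of that one-sentence reduction.
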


Just like the generic Mumford-Tate group, the algebraic monodromy group of a variation of Hodge structure has an interpretation in terms of period maps.
The description in the proof of Lemma \ref{monsm} motivates the following definition.

\begin{mydef}
Let $ X \overset{\iota}{\leftarrow} X_1 \overset{\pi}{\rightarrow} X_2$ be a diagram of morphisms of Hodge varieties and $x_2 \in X_2$. An irreducible component of a variety of the form $\iota(\pi^{-1}(\{x_2\}))$ is
called a \emph{weakly special subvariety} of $X$.
\end{mydef}

\begin{mydef}
A closed irreducible algebraic subvariety $Z \subset S$ is called \emph{weakly special} for $\mathbb{V}$ if it is maximal among the closed irreducible algebraic subvarieties of $S$ having the same algebraic monodromy group as $Z$.
\end{mydef}

Weakly special subvarieties admit an interpretation via the period map.
\begin{thm}[\cite{KO}, Corollary 4.14]\label{wsperiodmap1}
The weakly special subvarieties of $S$ are precisely the irreducible components of the preimages of weakly special subvarieties in $X$ under the period map.
\end{thm}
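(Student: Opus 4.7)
The plan is to prove both inclusions by leveraging the description of the algebraic monodromy group via a Hodge-variety projection as in the proof of Lemma \ref{monsm}, together with the algebraicity of period-map preimages of weakly special subvarieties.

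For the forward direction, let $Z \subset S$ be weakly special with algebraic monodromy group $H := H_Z$, which is a normal subgroup of the derived group of $G := G_Z$ by Proposition \ref{normal1}. The quotient $G \to G/H$ and the inclusion $G \hookrightarrow G_S$ yield morphisms of Hodge varieties
\begin{equation*}
X \overset{\iota}{\leftarrow} X_G \overset{\pi}{\rightarrow} X_{G/H}.
\end{equation*}
The argument in the proof of Lemma \ref{monsm} shows that $\Phi(Z^{an})$ projects to a single point $y \in X_{G/H}$, hence lies in an irreducible component $Y$ of $\iota(\pi^{-1}(\{y\}))$, which is by definition a weakly special subvariety of $X$. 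Then $Z$ sits inside some irreducible component $W$ of $\Phi^{-1}(Y)$. To deduce $Z = W$, I would compute $H_W$: since $\Phi(W^{an})$ also projects to $y$ and the stabilizer of $y$ in $(G/H)(\mathbb{R})$ is compact, the monodromy of $W$ projects to a finite subgroup of $(G/H)(\mathbb{Q})$, forcing $H_W \subset H_Z$. The reverse inclusion is automatic from $Z \subset W$, and maximality in the definition of weakly special then gives $Z = W$.

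For the reverse direction, let $Y \subset X$ be a weakly special subvariety realized as an irreducible component of $\iota_1(\pi_1^{-1}(\{x_2\}))$ for a diagram $X \overset{\iota_1}{\leftarrow} X_1 \overset{\pi_1}{\rightarrow} X_2$, and let $W$ be an irreducible component of $\Phi^{-1}(Y)$; algebraicity of $W$ as a closed subvariety of $S$ follows from the o-minimal Chow theorem of \cite{BKT} applied to the definable closed subset $Y \subset X$. Applying the forward direction to $W$ produces a weakly special $\tilde{W} \supset W$, which is an irreducible component of $\Phi^{-1}(\tilde{Y})$ for the canonical weakly special subvariety $\tilde{Y} \subset X$ attached to the pair $(G_W, H_W)$. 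It then suffices to verify $\tilde{Y} \subset Y$, for this implies $\tilde{W} \subset \Phi^{-1}(Y)$, and since $\tilde{W}$ is irreducible and $W$ is a maximal irreducible closed subset of $\Phi^{-1}(Y)$, we must have $\tilde{W} = W$.

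The hard part is establishing $\tilde{Y} \subset Y$ in the reverse direction: this is a minimality property of the canonical weakly special subvariety containing $\Phi(W^{an})$, and reduces to showing that $H_W$ lies in the connected kernel of the map $G_{X_1} \to G_{X_2}$ underlying the diagram defining $Y$. This in turn follows by the same compact-stabilizer argument used in the proof of Lemma \ref{monsm}: because $\Phi(W^{an}) \subset Y \subset \iota_1(\pi_1^{-1}(\{x_2\}))$, the monodromy of $W$, projected through $\pi_1$, stabilizes $x_2$ and therefore has finite image, so its identity component lies in $\ker(G_{X_1} \to G_{X_2})$ and thus the Hodge datum quotient used to build $\tilde{Y}$ factors through the one defining $Y$.
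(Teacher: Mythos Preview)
The paper does not give its own proof of this theorem: it is quoted verbatim from \cite{KO}, Corollary~4.14, and no argument is supplied. So there is no proof in the paper to compare against; I can only assess your sketch on its own terms.

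Your outline is essentially the right one and tracks the ideas already visible in the proof of Lemma~\ref{monsm}. Two points deserve to be made explicit. First, in the forward direction, before you can say ``the monodromy of $W$ projects to a finite subgroup of $(G/H)(\mathbb{Q})$'' you need to know that the monodromy of $W$ lands in $G=G_Z$ at all. This follows once you check $G_W=G_Z$: from $Z\subset W$ you have $G_Z\subset G_W$, while $\Phi(W^{an})\subset\iota(X_G)$ forces, for a Hodge-generic $w\in W$, the cocharacter $h_w$ (hence $G_W=G_w$) into some $\gamma G\gamma^{-1}$ with $\gamma\in\Gamma$; comparing dimensions of connected groups gives $G_W=G_Z$. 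You should also invoke the algebraicity of $W$ (via \cite{BKT}) in the forward direction as well, since the maximality defining ``weakly special'' is among algebraic subvarieties.

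Second, in the reverse direction, the phrase ``applying the forward direction to $W$'' is slightly circular as written, because $W$ is not yet known to be weakly special. What you mean is: take $\tilde W\supset W$ maximal with $H_{\tilde W}=H_W$; then $\tilde W$ is weakly special by definition, Lemma~\ref{lem1} gives $G_{\tilde W}=G_W$, and now the forward direction applies to $\tilde W$ to produce the canonical $\tilde Y$ attached to $(G_W,H_W)$. Your compact-stabilizer argument for $H_W\subset\ker(G_{X_1}\to G_{X_2})$ then goes through (again after first placing $G_W$ inside a $\Gamma$-conjugate of $G_{X_1}$), and the inclusion $\tilde Y\subset Y$ follows, giving $\tilde W=W$.
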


We will frequently use the Theorem of the fixed part. The following is a slight generalization of (\cite{Schmid}, Corollary 7.23) to the non-smooth case.

\begin{thm}[\cite{Schmid}, Corollary 7.23] \label{fixedpartnonsm1}
Let $\xi$ be a global section of $\mathbb{V}^{\otimes (m,n)}$ over a finite étale cover $Z'$ of a closed irreducible subvariety $Z\subset S$ such that $\xi_s$ is a Hodge class for some point $s \in Z'(\mathbb{C})$. Then $\xi_t$ is Hodge for every point $t \in Z'(\mathbb{C})$.
\end{thm}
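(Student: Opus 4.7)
The plan is to reduce to the classical smooth-base case of the theorem of the fixed part by passing to a resolution of singularities. The key observation is that $\mathbb{V}$ is defined on the smooth ambient variety $S$, so pulling back along any morphism from a smooth variety into $S$ produces again a polarizable $\mathbb{Q}$-VHS on a smooth base, to which Schmid's original statement directly applies.

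First I would apply Hironaka's theorem to obtain a resolution of singularities $\pi : \widetilde{Z'} \to Z'$, with $\widetilde{Z'}$ smooth, irreducible, and quasi-projective, and $\pi$ proper, surjective and birational. Composing with the canonical map $Z' \to S$ gives a morphism $f : \widetilde{Z'} \to S$ from a smooth quasi-projective variety, so $f^* \mathbb{V}^{\otimes (m,n)}$ is a polarizable $\mathbb{Q}$-variation of Hodge structure on $\widetilde{Z'}$, and $\tilde{\xi} := \pi^* \xi$ is a global flat section of it.

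Next I would pick a lift $\tilde{s} \in \pi^{-1}(s)$, which exists by surjectivity of $\pi$. Under the canonical identification $(f^* \mathbb{V}^{\otimes (m,n)})_{\tilde{s}} = \mathbb{V}^{\otimes (m,n)}_s$, one has $\tilde{\xi}_{\tilde{s}} = \xi_s$, which is Hodge by hypothesis. Schmid's theorem of the fixed part in the smooth case (\cite{Schmid}, Corollary 7.23) applied to $\tilde{\xi}$ on $\widetilde{Z'}$ then yields that $\tilde{\xi}_{\tilde{t}}$ is Hodge for every $\tilde{t} \in \widetilde{Z'}(\mathbb{C})$. Finally, for an arbitrary $t \in Z'(\mathbb{C})$, any $\tilde{t} \in \pi^{-1}(t)$ satisfies $\xi_t = \tilde{\xi}_{\tilde{t}}$ under the natural identification of fibers, so $\xi_t$ is Hodge.

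The argument is essentially formal once the resolution is invoked. The only facts used are that $f^* \mathbb{V}$ is a polarizable VHS on $\widetilde{Z'}$, which is immediate, and that Hodge-ness of a flat section transports through $\pi$ via the canonical identification of fibers, which is routine. I do not anticipate a genuine obstacle: the singular case follows from the smooth case by a standard desingularization argument, together with the observation that the conclusion at a singular point $t \in Z'$ only depends on the VHS at the image $t \in S$, which is already detected by any lift $\tilde{t} \in \widetilde{Z'}$.
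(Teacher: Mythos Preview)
Your argument is correct. Resolving $Z'$ and pulling back $\mathbb{V}$ along the composite $\widetilde{Z'}\to Z'\to S$ does produce a polarizable $\mathbb{Q}$-VHS on a smooth quasi-projective base, so Schmid's original result applies verbatim, and surjectivity of the resolution transports the conclusion back down. The only implicit assumption is that $Z'$ is connected (equivalently irreducible, since a connected finite \'etale cover of an irreducible variety is irreducible), which is needed for the statement itself to be non-vacuous and which the paper also tacitly assumes.

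The paper takes a different route. Rather than resolving, it works with the period map: on the smooth locus $Z^{sm}$ the classical fixed-part theorem forces the period map for the quotient datum $(G_Z/H_Z,\overline{\mathcal D}_Z)$ to be constant, and then density of $Z^{sm}$ in $Z$ (for the analytic topology) extends this constancy to all of $Z$. One concludes that the sub-variation $(\mathbb{V}^{\otimes(m,n)}|_Z)^{H_Z}$ is a constant variation, whence any global section Hodge at one point is Hodge everywhere. This avoids Hironaka entirely and stays inside the Hodge-variety formalism developed in Lemma~\ref{monsm}; your approach is more elementary and self-contained, trading the period-map analysis for a one-line invocation of resolution of singularities. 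Both arguments ultimately reduce to the smooth case --- yours by changing the source, the paper's by restricting to $Z^{sm}$ and then using a closure argument in the target.
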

\begin{proof}
By the Theorem of the fixed part applied to a suitable finite étale cover of the smooth locus of $Z$, we see as in (\cite{KO}, Lemma 4.12) that the period map restricted to $Z^{sm}$ factors as
$$ \Phi: Z^{sm,an} \to \iota(X_{H_Z, y}) \subset \iota(X_{G_Z})$$
where $X_{H_Z,y }:= \pi^{-1}(\{y\})$ and $\iota: X_{G_Z} \to X$ and $\pi: X_{G_Z} \to X_{G_Z/H_Z}$ are morphisms of Hodge varieties.
As above, since $Z^{sm}$ is dense in $Z$, the image of $Z$ under the period map is also contained in $\iota(X_{H_Z, y})$.
In particular, up to replacing $Z$ by some finite étale cover we can form the period map
\begin{equation}\label{periodmap1} \Phi': {Z}^{an} \to X_{G_Z} \to X_{G_Z/H_Z} \end{equation}
and the above inclusion shows that it is constant.
Consider the natural representation of $G_Z/H_Z$ on $(\mathbb{V}_s^{\otimes (m,n)})^{H_Z}$. Under this representation, the period map (\ref{periodmap1}) corresponds to the subvariation of Hodge structure $\left.(\mathbb{V}^{\otimes (m,n)} \right|_{Z})^{H_Z}$ over $Z$, which is therefore constant.
Hence every global section that is a Hodge class at one point is a Hodge class everywhere.
\end{proof}

The following Lemma is a geometric application of the Theorem of the fixed part.
\begin{lem}\label{lem1}
If $Z \subset Y$ are two closed irreducible subvarieties satisfying $H_Y \subset G_Z$, then $G_Z = G_Y$.
\end{lem}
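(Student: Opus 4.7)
The plan is to show the two sets of generic Hodge tensors $\mathcal{H}_Z$ and $\mathcal{H}_Y$ coincide, from which the equality of Mumford-Tate groups follows by definition. One inclusion is easy: since $Z \subset Y$, a generic Hodge tensor on $Y$ restricts to one on $Z$ (at any chosen point $s \in Z(\mathbb{C})$), giving $\mathcal{H}_Y \subset \mathcal{H}_Z$ and correspondingly $G_Z \subset G_Y$. The real content is the other inclusion.

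So pick $s \in Z(\mathbb{C})$ and $\xi \in \mathcal{H}_Z$, which in particular means $\xi \in \mathbb{V}_s^{\otimes}$ is fixed by (a finite index subgroup of) the monodromy $\rho_Z(\pi_1(Z^{an},s))$, hence by $H_Z$. Since $\xi \in \mathcal{H}_Z$ and $G_Z$ fixes $\mathcal{H}_Z$ pointwise, the hypothesis $H_Y \subset G_Z$ immediately gives that $H_Y$ also fixes $\xi$. Equivalently, a finite index subgroup of $\rho_Y(\pi_1(Y^{an},s))$ fixes $\xi$, so after passing to a suitable finite étale cover $Y' \to Y$, parallel transport extends $\xi$ to a global flat section $\tilde{\xi}$ of $\mathbb{V}^{\otimes(m,n)}$ over $Y'$.

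At the point $s$ (lifted to $Y'$) the section $\tilde{\xi}$ is a Hodge class, since $\xi$ is. Theorem~\ref{fixedpartnonsm1} (the Theorem of the fixed part in the non-smooth case, just established) then ensures that $\tilde{\xi}$ is a Hodge class at every point of $Y'$. In particular, restricting to $Z$, we see that every parallel transport of $\xi$ along a path in $Z$ is Hodge at every point of $Z$ and, more importantly, the same holds along paths in $Y$. Therefore $\xi \in \mathcal{H}_Y$, establishing $\mathcal{H}_Z \subset \mathcal{H}_Y$, and the two sets agree.

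Consequently $G_Z = G_Y$ as subgroups of $GL(\mathbb{V}_s)$. The only slightly delicate point is the passage to a finite étale cover of $Y$ needed to turn $H_Y$-invariance into genuine monodromy-invariance, so that Theorem~\ref{fixedpartnonsm1} applies verbatim; but this is the standard manoeuvre used throughout the paper, so I do not expect any real obstacle.
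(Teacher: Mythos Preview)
Your proof is correct and follows essentially the same route as the paper: use $H_Y \subset G_Z$ to extend a generic Hodge tensor on $Z$ to a global section over a finite \'etale cover of $Y$, then invoke Theorem~\ref{fixedpartnonsm1} to conclude it is Hodge everywhere on $Y$. The only cosmetic difference is that the paper phrases this in terms of fixed tensors of the reductive group $G_Z$ rather than elements of $\mathcal{H}_Z$, but the argument is the same.
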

\begin{proof}
Let $s \in Z(\mathbb{C})$.
Since $G_Z$ is reductive, it is enough to show that every fixed tensor $v \in \mathbb{V}_s^{\otimes}$ of $G_Z$ is also fixed by $G_{Y}$. The condition $H_{Y} \subset G_{Z}$ shows that after possibly replacing $Y$ by a finite étale cover, there is a global section on $Y$ extending $v$. Since $v$ is Hodge at the point $s \in Z(\mathbb{C})$, the Theorem of the fixed part (Theorem \ref{fixedpartnonsm1}) allows us to conclude that this global section is a generic Hodge tensor over $Y$. So $G_{Y}$ fixes $v$, as desired.
\end{proof}

As can be easily seen from the description using the period map, every special subvariety is weakly special. We show how Lemma \ref{lem1} can be applied to give a more direct argument.

\begin{lem}\label{simpliesws}
Any special subvariety is weakly special.
\end{lem}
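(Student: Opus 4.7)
The plan is to use Lemma \ref{lem1} directly. Let $Z\subset S$ be a special subvariety. To show $Z$ is weakly special, I take an arbitrary closed irreducible subvariety $Y\subset S$ with $Z\subset Y$ and $H_Y=H_Z$, and I aim to conclude $Y=Z$.

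The key observation is that the hypothesis of Lemma \ref{lem1} is immediately satisfied: since the algebraic monodromy group is always contained in the generic Mumford-Tate group (a consequence of Proposition \ref{normal1}), we have $H_Y = H_Z \subset G_Z$. Applying Lemma \ref{lem1} to the pair $Z\subset Y$ then yields $G_Y = G_Z$.

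Now I invoke the assumption that $Z$ is special: by Definition \ref{definitionspecial1}, $Z$ is maximal among closed irreducible subvarieties of $S$ with generic Mumford-Tate group equal to $G_Z$. Since $Y\supset Z$ is such a subvariety, maximality forces $Y=Z$, and hence $Z$ is weakly special.

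There is no real obstacle here; the only subtle point is justifying the inclusion $H_Y\subset G_Y$ for a possibly singular $Y$, which is taken care of by combining Lemma \ref{monsm} with Proposition \ref{normal1} (applied to the smooth locus of $Y$). Everything else is a direct application of Lemma \ref{lem1} together with the maximality built into the definition of a special subvariety.
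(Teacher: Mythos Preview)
Your proof is correct and essentially identical to the paper's own argument: both take $Y\supset Z$ with $H_Y=H_Z$, note $H_Y=H_Z\subset G_Z$, apply Lemma~\ref{lem1} to obtain $G_Y=G_Z$, and conclude $Y=Z$ by the specialness of $Z$. Your closing remark about needing $H_Y\subset G_Y$ is slightly off target (what you actually use is $H_Z\subset G_Z$), but this does not affect correctness.
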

\begin{proof}
Suppose $Z$ is special and $Y \supset Z$ is a closed irreducible subvariety of $S$ which satisfies $H_Y=H_Z \subset G_Z$. By Lemma \ref{lem1} we get $G_Z = G_Y$ which implies $Z=Y$ since $Z$ is special. Hence $Z$ is weakly special.
\end{proof}

\subsection{Fields of definition of weakly special subvarieties}

Suppose $S$ is defined over $\bar{\mathbb{Q}}$.
In this section, we prove that
weakly special subvarieties of $S$ are defined over $\bar{\mathbb{Q}}$ once they contain a $\bar{\mathbb{Q}}$-point.
This recovers and generalizes a result of Saito-Schnell (\cite{SchnellSaito}) for special subvarieties.
Surprisingly, as in their paper, we merely require the base $S$ to be defined over $\bar{\mathbb{Q}}$, there is no condition on the variation $\mathbb{V}$.

Choose a $\bar{\mathbb{Q}}$-point $s \in S(\bar{\mathbb{Q}})$ and a prime number $\ell$.
The monodromy representation $\pi_1(S_{\mathbb{C}}^{an},s) \to GL(\mathbb{V}_{\mathbb{Z},s} \otimes {\mathbb{Z}_{\ell}})$ corresponding to the (analytic) $\mathbb{Z}_{\ell}$-local system $\mathbb{V}_{\mathbb{Z}}\otimes \mathbb{Z}_{\ell}$ factors through the étale fundamental group, giving rise to a continuous representation
$$ \rho_{\ell}: \pi_1^{\acute{e}t}(S_{\bar{\mathbb{Q}}},s) \cong \pi_1^{\acute{e}t}(S_{\mathbb{C}},s) \to GL(\mathbb{V}_{\mathbb{Z},s} \otimes \mathbb{Z}_{\ell}),$$
hence to an étale $\mathbb{Q}_{\ell}$-local system $\mathbb{V}_{\ell}$ on $S_{\bar{\mathbb{Q}}}$.
Here the isomorphism
$\pi_1^{\acute{e}t}(S_{\bar{\mathbb{Q}}},s) \cong \pi_1^{\acute{e}t}(S_{\mathbb{C}},s)$
comes from the fact that the étale fundamental group of a smooth (quasi-projective) variety is invariant under base change of algebraically closed fields (\cite{Grothendieck} Exposé VIII, Proposition 4.6).

For a closed irreducible complex subvariety $Z \subset S$, choose a point $s \in Z(\mathbb{C})$ and define $H_{\ell,Z}$ to be the identity component of the Zariski closure of the image of $\pi_1^{\acute{e}t}(Z, s)$ in $GL(\mathbb{V}_{\ell, s})$.

\begin{prop}\label{above} The comparison isomorphism $\mathbb{V}_{\ell, s} \cong \mathbb{V}_{s} \otimes \mathbb{Q}_{\ell}$ induces an isomorphism $H_{\ell, Z} \cong H_{Z} \otimes \mathbb{Q}_{\ell}$.
\end{prop}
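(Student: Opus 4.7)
The strategy is to realize both $H_{\ell,Z}$ and $H_Z\otimes\mathbb{Q}_\ell$ as the identity component of the Zariski closure of a common subgroup of $GL(\mathbb{V}_{\ell,s})$, so that the equality follows from a general fact about Zariski closure and $\ell$-adic density. First, I would recall the comparison between fundamental groups: for the complex variety $Z$ (allowed to be singular), Grothendieck's Riemann existence theorem (SGA~1, Exposé XII) identifies $\pi_1^{\acute{e}t}(Z_{\mathbb{C}},s)$ with the profinite completion of $\pi_1(Z^{an},s)$, so the canonical map $\pi_1(Z^{an},s)\to\pi_1^{\acute{e}t}(Z_\mathbb{C},s)$ has dense image for the profinite topology on the target.

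Next, I would unpack the construction of $\mathbb{V}_\ell$: the $\mathbb{Z}_\ell$-local system $\mathbb{V}_\mathbb{Z}\otimes\mathbb{Z}_\ell$ arises from $\mathbb{V}_\mathbb{Z}$ by tensoring, so its topological monodromy representation is (after tensoring) the restriction of $\rho_\ell$ along $\pi_1(Z^{an},s)\to\pi_1^{\acute{e}t}(Z,s)$. Since $\rho_\ell$ is continuous with respect to the profinite topology on its domain and the $\ell$-adic topology on $GL(\mathbb{V}_{\ell,s})$, the previous density statement implies that $\rho(\pi_1(Z^{an},s))\subset\rho_\ell(\pi_1^{\acute{e}t}(Z,s))$ is $\ell$-adically dense.

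The core observation is then that, for any subgroup $\Gamma\subset GL(\mathbb{V}_{\ell,s})$, its Zariski closure coincides with the Zariski closure of its $\ell$-adic closure, because Zariski-closed subsets of $GL_n(\mathbb{Q}_\ell)$ are $\ell$-adically closed. Applied to $\Gamma=\rho_\ell(\pi_1^{\acute{e}t}(Z,s))$, this gives
\[
\overline{\rho_\ell(\pi_1^{\acute{e}t}(Z,s))}^{\,\mathrm{Zar}} \;=\; \overline{\rho(\pi_1(Z^{an},s))}^{\,\mathrm{Zar}}\quad\text{inside }GL(\mathbb{V}_{\ell,s}).
\]
Finally, since $\rho(\pi_1(Z^{an},s))$ lies in $GL(\mathbb{V}_s)(\mathbb{Q})$, a direct check on defining ideals (pick a $\mathbb{Q}$-basis of $\mathbb{Q}_\ell$ and decompose elements of $\mathcal{O}(GL(\mathbb{V}_s))\otimes_\mathbb{Q}\mathbb{Q}_\ell$ accordingly) shows that the Zariski closure in $GL(\mathbb{V}_{\ell,s})$ is the base change to $\mathbb{Q}_\ell$ of the Zariski closure in $GL(\mathbb{V}_s)$. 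Taking identity components and using that forming the identity component commutes with extension of scalars for algebraic groups in characteristic zero yields $H_{\ell,Z}\cong H_Z\otimes\mathbb{Q}_\ell$, as desired.

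The main obstacle is the density statement of the first paragraph: one must be careful that the comparison $\pi_1^{\acute{e}t}(Z_\mathbb{C})=\widehat{\pi_1(Z^{an})}$ is valid despite $Z$ being potentially singular. This is classical but worth citing explicitly, as the rest of the argument is then a formal manipulation of Zariski closures and $\ell$-adic topology.
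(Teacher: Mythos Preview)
Your argument is correct and follows essentially the same route as the paper: both use that $\pi_1^{\acute{e}t}(Z,s)$ is the profinite completion of $\pi_1(Z^{an},s)$, deduce $\ell$-adic density of $\rho(\pi_1(Z^{an},s))$ in $\rho_\ell(\pi_1^{\acute{e}t}(Z,s))$ by continuity, and conclude equality of Zariski closures. You are somewhat more explicit than the paper about why the Zariski closure over $\mathbb{Q}_\ell$ is the base change of the one over $\mathbb{Q}$ and why passing to identity components is harmless, but the core idea is identical.
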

\begin{proof}
We recall the argument from (\cite{Moonen}, Lemma 4.3.4).
The étale fundamental group $\pi_1^{\acute{e}t}(Z,s)$ is the profinite completion of $\pi_1(Z^{an},s)$ (cf. \cite{Grothendieck} Exposé V, Corollary 5.2). It is enough to show that $\rho_{\ell}(\pi_1^{\acute{e}t}(Z,s)) \subset H_Z\otimes \mathbb{Q}_{\ell}$.
This follows from the fact that the image of $\pi_1(Z^{an},s)$ is dense in $\rho_{\ell}(\pi_1^{\acute{e}t}(Z,s))$ for the $\ell$-adic topology, so a fortiori for the Zariski topology.
\end{proof}

\begin{lem}\label{monconj}
We have a natural isomorphism $H_{Z^{\sigma}}\otimes \mathbb{Q}_{\ell} \cong H_{Z}\otimes \mathbb{Q}_{\ell}$ for any $\sigma \in Aut(\mathbb{C} / \bar{\mathbb{Q}})$.
\end{lem}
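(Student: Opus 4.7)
The plan is to reduce, via Proposition~\ref{above}, to producing a natural isomorphism $H_{\ell, Z^\sigma} \cong H_{\ell, Z}$ of the $\ell$-adic monodromy groups over $\mathbb{Q}_\ell$, and then to exploit that $\sigma$ fixes $\bar{\mathbb{Q}}$ pointwise so that both $S_{\bar{\mathbb{Q}}}$ and the étale local system $\mathbb{V}_\ell$ on it are preserved by $\sigma$-conjugation.

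First I would note that the projection $\sigma^{-1}\colon S^\sigma \to S$ is an isomorphism of abstract schemes (though not of $\mathbb{C}$-schemes), which restricts to an isomorphism $\sigma^{-1}\colon Z^\sigma \xrightarrow{\sim} Z$. Since the étale fundamental group is a functor on the category of schemes rather than on $\mathbb{C}$-schemes, this induces an isomorphism $\pi_1^{\acute{e}t}(Z^\sigma, s^\sigma) \xrightarrow{\sim} \pi_1^{\acute{e}t}(Z, s)$. Next, because $\mathbb{V}_\ell$ descends to an $\ell$-adic local system on $S_{\bar{\mathbb{Q}}}$ and $\sigma$ acts trivially on $\bar{\mathbb{Q}}$, the $\sigma$-conjugate local system $\mathbb{V}_\ell^\sigma$ on $S^\sigma$ is canonically identified with the pullback $(\sigma^{-1})^* \mathbb{V}_\ell$; indeed both may be described as pullbacks of the same $\mathbb{Q}_\ell$-local system on $S_{\bar{\mathbb{Q}}}$ along the respective structure morphisms. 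On fibers this yields a canonical identification $\mathbb{V}^\sigma_{\ell, s^\sigma} \xrightarrow{\sim} \mathbb{V}_{\ell, s}$ under which the two monodromy representations intertwine, so their images---together with the Zariski closures and identity components thereof---correspond. This produces the natural isomorphism $H_{\ell, Z^\sigma} \cong H_{\ell, Z}$, and Proposition~\ref{above} then gives the desired $H_{Z^\sigma} \otimes \mathbb{Q}_\ell \cong H_Z \otimes \mathbb{Q}_\ell$.

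There is no deep obstacle here; the main care is that $\sigma^{-1}$ is not a $\mathbb{C}$-morphism. This is precisely the reason one passes through the étale $\pi_1$ of schemes, rather than attempting a direct comparison at the level of topological fundamental groups of the analytic spaces $Z^{an}$ and $Z^{\sigma, an}$, where no analogous identification is available a priori.
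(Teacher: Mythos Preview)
Your proposal is correct and follows essentially the same approach as the paper: both reduce via Proposition~\ref{above} to the $\ell$-adic statement $H_{\ell,Z^\sigma}\cong H_{\ell,Z}$, then use that $\sigma^{-1}\colon Z^\sigma\to Z$ is an isomorphism of abstract schemes together with the fact that $\mathbb{V}_\ell$ descends to $S_{\bar{\mathbb{Q}}}$ to intertwine the two monodromy representations. The paper packages this as a commutative diagram with $\pi_1^{\acute{e}t}(S_{\mathbb{C}},-)$ in the middle column, while you phrase it in terms of pullbacks of local systems; the content is identical.
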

\begin{proof}
Following the above Proposition \ref{above}, it is enough to prove $H_{\ell, Z} \cong H_{\ell, Z^{\sigma}}$. The projection $\sigma^{-1}: Z^{\sigma} = Z \otimes_{\mathbb{C}, \sigma} \mathbb{C} \cong Z$ is an isomorphism of abstract schemes (but not of varieties over $\mathbb{C}$) and induces the following diagram
$$\xymatrix{\pi_1^{\acute{e}t}(Z, s) \ar[r] & \pi_1^{\acute{e}t}(S_{\mathbb{C}}, s) \ar[r] & GL(\mathbb{V}_{\ell, s}) \\
\pi_1^{\acute{e}t}(Z^{\sigma}, s^{\sigma}) \ar[r] \ar[u]^{\cong}& \pi_1^{\acute{e}t}(S_{\mathbb{C}}, s^{\sigma}) \ar[r] \ar[u]^{\cong}& GL(\mathbb{V}_{\ell, s^{\sigma}}) \ar[u]^{\cong}\, .
}$$
Now the fact that the local system $\mathbb{V}_{\ell}$ is defined over $\bar{\mathbb{Q}}$ translates into the commutativity of the right hand square of the diagram. It follows easily that the entire diagram commutes.
As a consequence, there is a natural isomorphism between the Zariski closure of the image of $\pi_1^{\acute{e}t}(Z, s)$ in $GL(\mathbb{V}_{\ell, s})$ and the Zariski closure of the image of $ \pi_1^{\acute{e}t}(Z^{\sigma}, s^{\sigma})$ in $GL(\mathbb{V}_{\ell, s^{\sigma}})$.
\end{proof}

\begin{lem} \label{wsconj}
If $Z$ is weakly special for $\mathbb{V}$, then $Z^{\sigma}$ is weakly special for $\mathbb{V}$ for any $\sigma \in Aut(\mathbb{C} / \bar{\mathbb{Q}})$.
\end{lem}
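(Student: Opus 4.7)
The plan is to argue by contradiction: if $Z^{\sigma}$ is not weakly special, then some strictly larger irreducible subvariety $Y \supsetneq Z^{\sigma}$ has $H_Y = H_{Z^{\sigma}}$, and I want to transport this back through $\sigma^{-1}$ to contradict the weak speciality of $Z$. Since $S$ is defined over $\bar{\mathbb{Q}}$ and $\sigma$ fixes $\bar{\mathbb{Q}}$, we have $S^{\sigma} = S$, so $Z^{\sigma}$ and $Y$ are genuine closed irreducible subvarieties of $S$, and applying $\sigma^{-1}$ to $Y$ produces another closed irreducible subvariety $Y^{\sigma^{-1}} \subset S$ containing $Z = (Z^{\sigma})^{\sigma^{-1}}$.

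The next step is to compare algebraic monodromy groups. From the inclusion $Z \subset Y^{\sigma^{-1}}$ the functoriality of $\pi_1$ (together with connectedness of $H_Z$) yields $H_Z \subset H_{Y^{\sigma^{-1}}}$. On the other hand, applying Lemma \ref{monconj} with $\sigma^{-1}$ to the pair $(Y, Y^{\sigma^{-1}})$ gives
$$H_{Y^{\sigma^{-1}}} \otimes \mathbb{Q}_{\ell} \;\cong\; H_Y \otimes \mathbb{Q}_{\ell} \;=\; H_{Z^{\sigma}} \otimes \mathbb{Q}_{\ell} \;\cong\; H_Z \otimes \mathbb{Q}_{\ell},$$
where the middle equality uses the assumption $H_Y = H_{Z^{\sigma}}$ and the outer isomorphisms are those of Lemma \ref{monconj}. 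In particular $\dim H_{Y^{\sigma^{-1}}} = \dim H_Z$. Since both groups are connected and $H_Z \subset H_{Y^{\sigma^{-1}}}$, we conclude $H_Z = H_{Y^{\sigma^{-1}}}$.

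Now I use that $Z$ is weakly special: the subvariety $Y^{\sigma^{-1}}$ contains $Z$ and has the same algebraic monodromy group as $Z$, so by maximality $Y^{\sigma^{-1}} = Z$. Applying $\sigma$ yields $Y = Z^{\sigma}$, contradicting $Y \supsetneq Z^{\sigma}$.

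The only potentially subtle point is the passage from the $\mathbb{Q}_\ell$-level isomorphism back to an equality over $\mathbb{Q}$; this is handled by the dimension argument combined with the a priori inclusion $H_Z \subset H_{Y^{\sigma^{-1}}}$ coming from geometry, which is why we need Lemma \ref{monconj} only up to tensoring with $\mathbb{Q}_{\ell}$ rather than an identification over $\mathbb{Q}$. Everything else is formal manipulation of conjugation by $\sigma$, once one notes that $\sigma$ acts on the set of closed irreducible subvarieties of $S$ (since $S_{\bar{\mathbb{Q}}}$ is $\sigma$-invariant) and preserves inclusions.
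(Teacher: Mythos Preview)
Your proof is correct and follows essentially the same route as the paper's: take $Y \supset Z^{\sigma}$ with $H_Y = H_{Z^{\sigma}}$, pull back by $\sigma^{-1}$, invoke Lemma~\ref{monconj} to compare monodromy groups over $\mathbb{Q}_\ell$, deduce $H_{Y^{\sigma^{-1}}} = H_Z$, and conclude by weak speciality of $Z$. The only difference is cosmetic: the paper argues directly rather than by contradiction, and where the paper simply writes ``this forces $H_{Y^{\sigma^{-1}}} = H_Z$'', you spell out the descent from $\mathbb{Q}_\ell$ to $\mathbb{Q}$ via the a priori inclusion $H_Z \subset H_{Y^{\sigma^{-1}}}$ together with equality of dimensions, which is a welcome clarification.
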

\begin{proof}
Let $Y \supset Z^{\sigma}$ be closed irreducible such that $H_{Y} = H_{Z^{\sigma}}$. From Lemma \ref{monconj} we see that the subvariety $Y^{\sigma^{-1}}$ containing $Z$ satisfies $H_{Y^{\sigma^{-1}}} \otimes \mathbb{Q}_{\ell} = H_{Z} \otimes \mathbb{Q}_{\ell}$. This forces $H_{Y^{\sigma^{-1}}} = H_Z$. Now the fact that $Z$ is weakly special implies that $Z= Y^{\sigma^{-1}}$, hence $Z^{\sigma} = Y$. We conclude that $Z^{\sigma}$ is weakly special.
\end{proof}

In contrast to the case of special subvarieties, we cannot expect weakly special subvarieties to be defined over $\bar{\mathbb{Q}}$ in general. For example, if the period map does not contract positive dimensional subvarieties to a point, every point $s \in S(\mathbb{C})$ is weakly special.
However, we now show that they are defined over $\bar{\mathbb{Q}}$ once they contain a single $\bar{\mathbb{Q}}$-point:

\begin{thm}\label{fod}
Let $Z$ be a weakly special subvariety of $S$ containing a $\bar{\mathbb{Q}}$-point. Then $Z$ is defined over $\bar{\mathbb{Q}}$.
\end{thm}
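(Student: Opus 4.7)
The plan is to show the $\mathrm{Aut}(\mathbb{C}/\bar{\mathbb{Q}})$-orbit of $Z$ is finite, from which standard Galois-theoretic reasoning forces $Z$ to be defined over $\bar{\mathbb{Q}}$ (a subvariety of a $\bar{\mathbb{Q}}$-variety whose orbit is finite has field of definition algebraic over $\bar{\mathbb{Q}}$, hence equal to $\bar{\mathbb{Q}}$).

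Fixing a $\bar{\mathbb{Q}}$-point $s \in Z(\bar{\mathbb{Q}})$, the first step is to record the properties shared by every $\sigma$-conjugate for $\sigma \in \mathrm{Aut}(\mathbb{C}/\bar{\mathbb{Q}})$: by Lemma \ref{wsconj}, $Z^\sigma$ is weakly special; since $s$ is $\bar{\mathbb{Q}}$-rational we have $s^\sigma = s$, so $Z^\sigma$ contains $s$; and because the comparison $\mathbb{V}_{\ell, s^\sigma} \cong \mathbb{V}_{\ell, s}$ that appears in Lemma \ref{monconj} reduces to the identity when $s = s^\sigma$, that lemma yields $H_{Z^\sigma} \otimes \mathbb{Q}_\ell = H_Z \otimes \mathbb{Q}_\ell$ as subgroups of $GL(\mathbb{V}_{\ell, s})$, whence $H_{Z^\sigma} = H_Z$ as $\mathbb{Q}$-subgroups of $GL(\mathbb{V}_s)$.

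Next I would exhibit a single weakly special subvariety $M \subset X$ containing $\Phi(Z^\sigma)$ for every $\sigma$. Lifting to $\tilde{\Phi}: \tilde{S} \to \mathcal{D}$ and choosing a lift $\tilde{s}$ of $s$, the argument in the proof of Lemma \ref{monsm} shows that the image of $Z^\sigma$ under $\tilde{\Phi}$ lies in the fiber $\pi^{-1}(\tilde{y})$ of $\pi: \mathcal{D}_{Z^\sigma} \to \overline{\mathcal{D}}_{Z^\sigma}$ at a single point $\tilde{y}$, and this fiber is nothing but the $H_{Z^\sigma}(\mathbb{R}) = H_Z(\mathbb{R})$-orbit of $\tilde{\Phi}(\tilde{s})$ in $\mathcal{D}$. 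Since this orbit depends only on $H_Z$ and $\tilde{\Phi}(\tilde{s})$, its image $M$ in $X = \Gamma \backslash \mathcal{D}$ is independent of $\sigma$.

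Each $Z^\sigma$ is therefore an irreducible component of the analytic set $\Phi^{-1}(M) \subset S^{an}$ passing through $s$. Because the germ of an analytic set at a point has only finitely many irreducible components, only finitely many global irreducible components of $\Phi^{-1}(M)$ contain $s$, so $\{Z^\sigma\}$ is finite and the theorem follows. The step I expect to be most delicate is the identification of the fiber $\pi^{-1}(\tilde{y})$ with an orbit depending purely on $H_Z$ and $\tilde{\Phi}(\tilde{s})$ rather than on the full Hodge datum $(G_{Z^\sigma}, \mathcal{D}_{Z^\sigma})$; this should come from unpacking the construction in the proof of Lemma \ref{monsm} together with the normality $H_{Z^\sigma} \triangleleft G_{Z^\sigma}^{\mathrm{der}}$ provided by Proposition \ref{normal1}.
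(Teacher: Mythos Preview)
Your approach is correct and genuinely different from the paper's. The paper proves only that the $\mathrm{Aut}(\mathbb{C}/\bar{\mathbb{Q}})$-orbit of $Z$ is \emph{countable}: it invokes Lemma~\ref{wsconj} and Theorem~\ref{wsperiodmap1} to write each $Z^\sigma$ as a component of $\Phi^{-1}(\iota_\sigma(\pi_\sigma^{-1}(x_\sigma)))$ for some diagram of Hodge morphisms $(\iota_\sigma,\pi_\sigma)$ and point $x_\sigma$, then observes that there are only countably many such diagrams, and that for a fixed diagram only countably many fibers $\iota(\pi^{-1}(x_\sigma))$ can pass through the common image $\Phi(z)$ (since the fibers $\pi^{-1}(x_\sigma)$ are pairwise disjoint upstairs and $\Gamma$ is countable). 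Your argument is sharper: you extract from the proof of Lemma~\ref{monconj} the stronger fact that $H_{Z^\sigma}=H_Z$ \emph{as subgroups of} $GL(\mathbb{V}_s)$ (not merely abstractly isomorphic), and use this to pin down a single weakly special $M\subset X$ containing every $\Phi(Z^\sigma)$, whence the orbit is actually finite. What your route buys is a stronger and more geometric conclusion; what the paper's route buys is that it sidesteps the orbit identification you flag as delicate.

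For that delicate step, a clean justification is a tangent-space computation. With $H:=H_Z\unlhd G_1:=G_{Z^\sigma}$ and $h:=\tilde\Phi(\tilde s)$, write $\mathfrak{k}=\mathrm{Lie}\,\mathrm{Stab}_{G_1(\mathbb{R})}(h)=\mathfrak{g}_1^{h(\mathbb{S})}$. Since $h(\mathbb{S})$ is a torus acting semisimply and $\mathfrak{h}$ is $h(\mathbb{S})$-stable (by normality), one has $(\mathfrak{g}_1/\mathfrak{h})^{h(\mathbb{S})}=\mathfrak{k}/(\mathfrak{k}\cap\mathfrak{h})$, so $d\pi_h\colon \mathfrak{g}_1/\mathfrak{k}\to \mathfrak{g}_1/(\mathfrak{h}+\mathfrak{k})$ has kernel $(\mathfrak{h}+\mathfrak{k})/\mathfrak{k}\cong \mathfrak{h}/(\mathfrak{h}\cap\mathfrak{k})$, which is exactly the tangent space to the $H(\mathbb{R})^+$-orbit of $h$. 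Hence that orbit is open in the fibre $\pi^{-1}(\tilde y)$ and is therefore its connected component through $h$; this depends only on $H_Z$ and $h$, not on $G_{Z^\sigma}$, as you needed.
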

\begin{proof}
We need to show that the set $\{Z^{\sigma}\}_{\sigma \in Aut(\mathbb{C}/\bar{\mathbb{Q}})}$ of conjugates of $Z$ is countable. For $z$ a $\bar{\mathbb{Q}}$-point contained in $Z$, we have $z \in Z^{\sigma}(\mathbb{C})$ for all $\sigma$.
Note that all $Z^{\sigma}$ are again weakly special by Lemma \ref{wsconj}.
By the characterization of weakly special subvarieties in Proposition \ref{wsperiodmap1}, for every $\sigma$ there is a diagram of morphisms of Hodge varieties $ X \overset{\iota_{\sigma}}{\leftarrow} X^{\sigma}_1 \overset{\pi_{\sigma}}{\rightarrow} X^{\sigma}_2$
and a point $x_{\sigma} \in X^{\sigma}_2$ such that $Z^{\sigma}$ is one of the finitely many irreducible components of the preimage of $\iota_{\sigma}(\pi_{\sigma}^{-1}(\{x_{\sigma}\}))$ under the period map.
In particular, $Z^{\sigma}$ is determined (up to choosing one of finitely many irreducible components) by the diagram of Hodge morphisms and the point $x_{\sigma}$.
There are only countably many choices for the diagram of Hodge morphisms, and we need to show that for a given diagram $(\iota, \pi)$ there are only countably many choices for $\iota(\pi^{-1}(\{x_{\sigma}\})) $.
Since $z \in Z^{\sigma}(\mathbb{C})$ for all $\sigma$, the $\iota(\pi^{-1}(\{x_{\sigma}\}))$ all contain a common point.
By noting that the $\pi^{-1}(\{x_{\sigma}\})$ are pairwise disjoint as $x_{\sigma}$ varies, and the group $\Gamma \subset G_S(\mathbb{Q})$ is countable, one sees that this leaves only countably many choices for $\iota(\pi^{-1}(\{x_{\sigma}\}))$.
\end{proof}
As a corollary we obtain a new proof of the following result, proven in (\cite{SchnellSaito}, Theorem 1):
\begin{cor}

Let $Z$ be a special subvariety of $S$ containing a $\bar{\mathbb{Q}}$-point. Then $Z$ is defined over $\bar{\mathbb{Q}}$.
\end{cor}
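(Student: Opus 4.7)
The plan is essentially to invoke the two results proved just before the corollary and combine them. By Lemma \ref{simpliesws}, any special subvariety $Z \subset S$ is in particular weakly special for $\mathbb{V}$. Since $Z$ contains a $\bar{\mathbb{Q}}$-point by hypothesis, Theorem \ref{fod} then applies directly to $Z$ and yields that $Z$ is defined over $\bar{\mathbb{Q}}$.

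So the proof is a two-line deduction with no genuine obstacle: the substantive work has already been done in proving Theorem \ref{fod} (via the countability argument using the $\ell$-adic comparison of algebraic monodromy groups in Lemma \ref{monconj}, Lemma \ref{wsconj}, and the characterization of weakly special subvarieties as components of preimages $\iota(\pi^{-1}(\{x_\sigma\}))$ under the period map) and in Lemma \ref{simpliesws} (which relies on Lemma \ref{lem1}, a consequence of the Theorem of the fixed part). The only thing to verify when writing it up is that one really is in a position to apply Theorem \ref{fod}, i.e.\ that the base $S$ is assumed defined over $\bar{\mathbb{Q}}$ throughout this subsection, which is the standing hypothesis stated at the beginning of the subsection.
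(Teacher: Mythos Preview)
Your proof is correct and matches the paper's approach exactly: the corollary is stated without proof in the paper precisely because it follows immediately from Theorem \ref{fod} together with Lemma \ref{simpliesws}. Your additional remarks about the standing hypothesis on $S$ and the ingredients underlying Theorem \ref{fod} are accurate and confirm that the deduction is valid.
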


\section{Absolute variations of Hodge structure}

Let $S$ be a smooth irreducible quasi-projective complex algebraic subvariety.
For every $\sigma \in \mathrm{Aut}(\mathbb{C} / \mathbb{Q})$ we can form the conjugate $S^{\sigma} = S \otimes_{\mathbb{C}, \sigma} \mathbb{C}$. We use the notation $\sigma^{-1}$ for the projection $\sigma^{-1}: S^{\sigma} = S \otimes_{\mathbb{C}, \sigma} \mathbb{C} \to S$.
In this section we define an absolute variation of Hodge structure on $S$ as a collection of variations of Hodge structure $\mathbb{V}^{\sigma}$ on each $\mathrm{Aut}(\mathbb{C} / \mathbb{Q}) $-conjugate of $S$ with certain compatibilities.

\begin{mydef}
An \emph{absolute variation of Hodge structure} is the datum of a $\mathbb{Z}$-variation of Hodge structure $(\mathbb{V}_{\mathbb{Z}}^{\sigma}, \mathcal{V}^{\sigma}, \nabla^{\sigma}, F^{\bullet, \sigma}) $ on $S^{\sigma}$ for each $\sigma \in \mathrm{Aut}(\mathbb{C} / \mathbb{Q})$ together with an isomorphism
\begin{equation}\label{comparisonisomabssp} \iota_{\sigma}: (\mathcal{V}^{\sigma} , \nabla^{\sigma}, F^{\bullet, \sigma}) \cong {\sigma^{-1}}^{*} (\mathcal{V}^{\mathrm{id}}, \nabla^{\mathrm{id}}, F^{\bullet, \mathrm{id}})\end{equation} of the associated filtered algebraic vector bundles with connection.
A \emph{polarization} of the absolute variation of Hodge structure is a polarization $\alpha^{\sigma}$ on $\mathbb{V}_{\mathbb{Z}}^{\sigma}$ for each $\sigma \in \mathrm{Aut}(\mathbb{C} / \mathbb{Q})$ such that the polarizations correspond to each other under the isomorphism $\iota_{\sigma}$.
\end{mydef}

From now on, all absolute variations are assumed to be polarizable.
We will often forget the $\mathbb{Z}$-structure and write $(\mathbb{V}^{\sigma})_{\sigma}$ for the associated collection of $\mathbb{Q}$-variations of Hodge structure, which we also call an absolute variation of Hodge structure. Note however that we require that all our $\mathbb{Q}$-variations admit a $\mathbb{Z}$-structure. We use the notation $(\mathbb{V}, \mathcal{V}, \nabla, F^{\bullet}):= (\mathbb{V}^{\mathrm{id}}, \mathcal{V}^{\mathrm{id}}, \nabla^{\mathrm{id}}, F^{\bullet, \mathrm{id}} )$.

\begin{rem}
Given a $\mathbb{Z}$-variation of Hodge structure $\mathbb{V}_{\mathbb{Z}}$, using Deligne's canonical extension (cf. \cite{Deligneregular}) one can show that the holomorphic vector bundle $(\mathcal{V}^{an}= \mathbb{V}_{\mathbb{Z}} \otimes_{\mathbb{Z}} \mathcal{O}_{S^{an}}, \nabla^{an})$ together with the holomorphic connection defined by $\mathbb{V}$ arises as the analytification of an algebraic vector bundle with regular algebraic connection $(\mathcal{V}, \nabla)$.
Moreover, making use of Schmidt's nilpotent orbit theorem, one can see that the Hodge filtration arises from an algebraic filtration on $\mathcal{V}$ (\cite{Schmid}, 4.13). Therefore the comparison isomorphism (\ref{comparisonisomabssp}) makes sense for general variations of Hodge structure.
\end{rem}

\begin{ex}\label{geometry}
Let $f: X \to S$ be a smooth projective morphism of irreducible smooth quasi-projective algebraic varieties over $\mathbb{C}$.
For any automorphism $\sigma \in \mathrm{Aut}(\mathbb{C}/\mathbb{Q})$ we consider the base change $f^{\sigma}: X^{\sigma} \to S^{\sigma}$.
The collection of variations of Hodge structure $\mathbb{V}^{\sigma} := R_{prim}^kf^{\sigma,an}_{*} \underline{\mathbb{Q}}$ on $S^{\sigma}$ is an absolute variation of Hodge structure on $S$.

\end{ex}

Suppose that $S$ is defined over a subfield $K \subset \mathbb{C}$ and $\mathcal{V}$, $\nabla$ and $F^{\bullet}$ are all defined over $K$.
In this case, an absolute variation of Hodge structure satisfying $\mathbb{V}^{\sigma} = \mathbb{V}$ for all $\sigma \in \mathrm{Aut}(\mathbb{C}/K)$ is called a \emph{$K$-absolute variation}.
For example, if $K$ is a number field, we only need to specify $\mathbb{V}^{\sigma}$ for the finitely many $\sigma \in \mathrm{Gal}(K/\mathbb{Q})$.
If the morphism $f$ is defined over $K$, then Example \ref{geometry} produces a $K$-absolute variation.

We call an absolute variation of Hodge structure on a point an \emph{absolute Hodge structure}. It is a collection of $\mathbb{Q}$-Hodge structures $V^{\sigma}$ together with isomorphisms $\iota_{\sigma}: V^{\sigma}\otimes \mathbb{C} \cong V\otimes \mathbb{C}$ respecting the filtration.
Absolute Hodge structures naturally form a Tannakian category where the morphisms are morphisms of Hodge structures which are dR-absolute Hodge in the sense of the definition in the next section.

\subsection{dR-absolute Hodge cycles}

In this section we recall a slightly weaker version considered in \cite{Voisin} of Deligne's notion of absolute Hodge cycle.
Choose a point $s \in S(\mathbb{C})$. For every tensor $\alpha \in \mathbb{V}_{s}^{\otimes (m,n)}$, the comparison $\mathbb{V}_{s}^{\otimes (m,n)} \otimes \mathbb{C} \cong \mathcal{V}_{s}^{\otimes (m,n)}$ defines a de Rham tensor $\alpha_{dR} \in \mathcal{V}^{\otimes (m,n)}_s$.

When given an absolute variation of Hodge structure $(\mathbb{V}^{\sigma})_{\sigma}$ on $S$, conjugation by any element $\sigma \in \mathrm{Aut}(\mathbb{C} / \mathbb{Q})$ gives a conjugated de Rham tensor $\alpha_{dR}^{\sigma} \in (\mathcal{V}^{\sigma}_{s^{\sigma}})^{\otimes (m,n)}$.

\begin{mydef}[\cite{Voisin}, Definition 0.1]\label{defdRabsoluteHodge}
The tensor $\alpha \in \mathbb{V}_s^{\otimes (m,n)}$ is called \emph{dR-absolute Hodge} if for every $\sigma \in \mathrm{Aut}(\mathbb{C} / \mathbb{Q})$ the conjugate $\alpha_{dR}^{\sigma} \in (\mathcal{V}^{\sigma}_{s^{\sigma}})^{\otimes (m,n)}$ is induced by a Hodge tensor in $(\mathbb{V}^{\sigma}_{s^{\sigma}})^{\otimes (m,n)}$.
\end{mydef}

A powerful tool in the study of absolute Hodge classes is Deligne's Principle B (cf. \cite{Deligne}, Theorem 2.12).
\begin{thm}\label{PrincipleBnonsmooth}
Let $\xi$ be a global section of $\mathbb{V}^{\otimes (m,n)}$ over a finite étale cover $Z'$ of a closed irreducible subvariety $Z\subset S$ such that $\xi_s$ is dR-absolute Hodge for some point $s \in Z'(\mathbb{C})$. Then $\xi_t$ is dR-absolute Hodge for every point $t \in Z'(\mathbb{C})$.
\end{thm}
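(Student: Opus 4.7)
The plan is to mirror the way Deligne's smooth Principle B (\cite{Deligne}, Theorem 2.12) is deduced from the smooth theorem of the fixed part, but substituting the non-smooth analogue Theorem \ref{fixedpartnonsm1} in place of the smooth one. Concretely, for each $\sigma \in \mathrm{Aut}(\mathbb{C}/\mathbb{Q})$, I will construct a flat section of the local system $\mathbb{V}^{\sigma,\otimes(m,n)}$ over $(Z')^{\sigma}$ that realizes the $\sigma$-conjugated de Rham component of $\xi$, and check that it is a Hodge class at $s^{\sigma}$; then Theorem \ref{fixedpartnonsm1} applied to this section forces it to be Hodge at every point, which unpacks to dR-absoluteness of $\xi_{t}$.

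Concretely, the flat section $\xi$ gives a $\nabla$-flat algebraic section $\xi_{dR}\in H^{0}(Z',\mathcal{V}^{\otimes(m,n)})^{\nabla}$. Pulling back along $\sigma^{-1}:(Z')^{\sigma}\to Z'$ and transporting through the comparison isomorphism $\iota_{\sigma}$ produces a $\nabla^{\sigma}$-flat section
\[ \xi^{\sigma}_{dR} := \iota_{\sigma}^{-1}\bigl((\sigma^{-1})^{*}\xi_{dR}\bigr) \in H^{0}((Z')^{\sigma},\mathcal{V}^{\sigma,\otimes(m,n)})^{\nabla^{\sigma}}. \]
By the hypothesis that $\xi_{s}$ is dR-absolute Hodge, the value $\xi^{\sigma}_{dR,s^{\sigma}}$ is the image in $\mathcal{V}^{\sigma,\otimes(m,n)}_{s^{\sigma}}$ of a Hodge class already lying in the rational fibre $\mathbb{V}^{\sigma,\otimes(m,n)}_{s^{\sigma}}$.

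The key step is to upgrade $\xi^{\sigma}_{dR}$ from a flat section of the algebraic bundle $\mathcal{V}^{\sigma,\otimes(m,n)}$ to a flat section of the rational local system $\mathbb{V}^{\sigma,\otimes(m,n)}$ over $(Z')^{\sigma}$. After possibly restricting to a connected component, the flat sections of $\mathcal{V}^{\sigma,an,\otimes(m,n)}$ over $(Z')^{\sigma,an}$ correspond under parallel transport to monodromy-invariant vectors in $\mathbb{V}^{\sigma,\otimes(m,n)}_{s^{\sigma}} \otimes_{\mathbb{Q}} \mathbb{C}$, and such an invariant vector is $\mathbb{Q}$-rational precisely when its value at a single point is. Since rationality at $s^{\sigma}$ is guaranteed by the previous paragraph, $\xi^{\sigma}_{dR}$ is in fact a flat section of $\mathbb{V}^{\sigma,\otimes(m,n)}$ on $(Z')^{\sigma}$ that is a Hodge class at $s^{\sigma}$. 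Theorem \ref{fixedpartnonsm1} applied to the pair $(Z^{\sigma},(Z')^{\sigma})$ with the absolute variation $\mathbb{V}^{\sigma}$ then gives that $\xi^{\sigma}_{dR,t^{\sigma}}$ is a Hodge class for every $t \in Z'(\mathbb{C})$, i.e.\ $\xi_{t}$ is dR-absolute Hodge.

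The main obstacle is the rationality upgrade just described: a priori $\xi^{\sigma}_{dR}$ lives only in the algebraic de Rham bundle $\mathcal{V}^{\sigma}$, and one must verify that its image in $\mathbb{V}^{\sigma}\otimes_{\mathbb{Q}}\mathbb{C}$ actually lies in the $\mathbb{Q}$-subbundle $\mathbb{V}^{\sigma}$ globally, and not merely at $s^{\sigma}$. This relies on the standard correspondence between flat sections of the analytic bundle and monodromy-invariant vectors in the stalk, together with the fact that the monodromy of the local system $\mathbb{V}^{\sigma}$ preserves its $\mathbb{Q}$-structure, so that rationality at a single point propagates across the entire connected component of $(Z')^{\sigma,an}$. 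After this upgrade the argument reduces mechanically to the already-established non-smooth theorem of the fixed part.
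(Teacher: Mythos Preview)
Your proof is correct and matches the paper's approach: construct the $\sigma$-conjugated flat de Rham section $\xi^{\sigma}_{dR}$, upgrade it to a section of the rational local system using rationality at $s^{\sigma}$, and then apply Theorem \ref{fixedpartnonsm1}. The only cosmetic difference is that the paper routes the flat-section/local-system correspondence explicitly through the smooth locus of $(Z')^{\sigma}$ (Riemann--Hilbert there, then Lemma \ref{monsm} to extend), whereas you invoke it directly on the possibly singular $(Z')^{\sigma}$; since the bundle with connection is restricted from the smooth ambient $S^{\sigma}$, this is harmless.
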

\begin{proof}
To the global section $\xi$ over $Z'$ we can attach a flat section $\xi_{dR}$ of $\left. \mathcal{V}^{\otimes (m,n)} \right|_{Z'}$.
Conjugation by $\sigma \in \mathrm{Aut}(\mathbb{C} / \mathbb{Q})$ produces a global section $\xi^{\sigma}_{dR}$ of the vector bundle $\left. (\mathcal{V} ^{\sigma})^{\otimes (m,n)} \right|_{{{Z'}^{\sigma}}} $. Since the connection is also algebraic, $\nabla^{\sigma} \xi_{dR}^{\sigma}=0$. Hence $\xi_{dR}^{\sigma}$ is a flat section and applying the Riemann-Hilbert correspondence to the restriction $\left. \xi^{\sigma}_{dR} \right|_{{Z'}^{\sigma, sm}}$ to ${Z'}^{\sigma, sm}$ gives a global section of the $\mathbb{C}$-local system $\left. (\mathbb{V}_{\mathbb{C}}^{\sigma})^{\otimes (m,n)} \right|_{{Z'}^{\sigma, sm}}$ over the smooth locus of ${Z'}^{\sigma}$.
By Lemma \ref{monsm}, this global section uniquely extends to a global section $\xi^{\sigma}$ on ${Z'}^{\sigma}$.
The associated flat section of $\left. (\mathcal{V} ^{\sigma})^{\otimes (m,n)} \right|_{{Z'}^{\sigma}} $ is $\xi^{\sigma}_{dR}$ because this is true over the dense open subset ${Z'}^{\sigma, sm }$. By assumption, $\xi^{\sigma}_{s^{\sigma}}$ is a Hodge cycle, so by Theorem \ref{fixedpartnonsm1} the fiber $\xi^{\sigma}_{t^{\sigma}}$ is a Hodge cycle for every $t \in Z'(\mathbb{C})$.
We conclude that $\xi_t$ is dR-absolute Hodge for every $t \in Z'(\mathbb{C})$.
\end{proof}

Let $\mathcal{AH}_Z \subset \mathbb{V}_s^{\otimes}$ denote the subset of all tensors $v \in \mathbb{V}_s^{\otimes}$ such that the translates of $v$ by parallel transport are dR-absolute Hodge at every point $t \in Z(\mathbb{C})$.

\begin{mydef}We define the generic dR-absolute Mumford-Tate group $G_{Z}^{AH}$ to be the subgroup of $GL(\mathbb{V}_s)$ fixing all tensors in $\mathcal{AH}_Z$.

\end{mydef}
\begin{rem}\label{grouppoint1}
If $Z=\{s\}$ is a point, then we may define $G_Z^{AH}$ as the Tannakian group of the subcategory generated by the absolute Hodge structure $(\mathbb{V}^{\sigma}_{s^{\sigma}})_{\sigma}$ and its dual inside the category of absolute Hodge structures.
To see this, note that since the absolute Hodge structure is polarized the Tannakian group is a (possibly non-connected) reductive group and therefore characterized by the tensors it fixes. Since these are exactly the dR-absolute Hodge tensors, we recover the above definition.
In the geometric situation of Example \ref{geometry}, the group $G_s^{AH}$ is then the motivic Galois group of $H_{prim}^k(X_s)$ in terms of motives for dR-absolute Hodge cycles (cf. \cite{DM}, §6 for the variant which uses the stronger notion of absolute Hodge cycles).
\end{rem}

Since every dR-absolute Hodge cycle is in particular a Hodge cycle we see that we have the inclusion $G_Z \subset G_Z^{AH}$.

We make crucial use of the following Lemma, which is a geometric incarnation of Deligne's Principle B:

\begin{lem}\label{lem2}
Suppose $Z \subset Y$ are closed irreducible subvarieties that satisfy $H_Y \subset G^{AH}_Z$. Then $G^{AH}_Z = G^{AH}_Y$.

\end{lem}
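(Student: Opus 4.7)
The plan is to imitate the proof of Lemma \ref{lem1}, with Deligne's Principle B (Theorem \ref{PrincipleBnonsmooth}) taking the role played there by the Theorem of the fixed part (Theorem \ref{fixedpartnonsm1}). This substitution is natural because $\mathcal{AH}_Z$ is obtained from $\mathcal{H}_Z$ by imposing the dR-absolute condition pointwise, and Principle B is exactly the analogue of the fixed-part theorem that propagates this pointwise condition along a global section.

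First I would record the easy inclusion $G^{AH}_Z \subset G^{AH}_Y$, which is automatic from $Z \subset Y$: the restriction to $Z$ of any generic dR-absolute Hodge tensor on $Y$ (or more precisely, on a finite étale cover of $Y$) still satisfies the defining condition at every point of $Z$, so $\mathcal{AH}_Y \subset \mathcal{AH}_Z$, and the inclusion reverses upon taking stabilizers. All of the content of the lemma lies in the opposite inclusion $G^{AH}_Y \subset G^{AH}_Z$, which I will obtain by proving the stronger statement $\mathcal{AH}_Z \subset \mathcal{AH}_Y$.

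For the main step, fix $v \in \mathcal{AH}_Z \subset \mathbb{V}_s^{\otimes (m,n)}$. The hypothesis $H_Y \subset G^{AH}_Z$ says exactly that $H_Y$ fixes $v$. Since $H_Y$ is the identity component of the Zariski closure of the image of $\rho_Y : \pi_1(Y^{an},s) \to GL(\mathbb{V}_s)$, some finite-index subgroup of this image fixes $v$, so after replacing $Y$ by an appropriate finite étale cover $Y'$ the tensor $v$ extends to a global section $\tilde{v}$ of $\mathbb{V}^{\otimes (m,n)}$ over $Y'$. By assumption $\tilde{v}_s = v$ is dR-absolute Hodge, and Lemma \ref{monsm} ensures that we may work on $Y'$ as a whole rather than only its smooth locus. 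Applying Principle B (Theorem \ref{PrincipleBnonsmooth}) to $\tilde{v}$ then shows that $\tilde{v}_t$ is dR-absolute Hodge for every $t \in Y'(\mathbb{C})$, so $v \in \mathcal{AH}_Y$, which is what we wanted.

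The only genuinely delicate ingredient is Principle B in the possibly non-smooth setting, but this has already been packaged in Theorem \ref{PrincipleBnonsmooth}; once it is available, the proof is a purely formal translation of Lemma \ref{lem1} from the Hodge to the dR-absolute Hodge setting, and no additional obstacles arise.
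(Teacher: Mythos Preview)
Your proof is correct and follows essentially the same approach as the paper: both arguments show $\mathcal{AH}_Z \subset \mathcal{AH}_Y$ by using $H_Y \subset G^{AH}_Z$ to extend each $v \in \mathcal{AH}_Z$ to a global section over a finite étale cover of $Y$, and then invoke Principle B (Theorem \ref{PrincipleBnonsmooth}) to propagate the dR-absolute Hodge property from the single point $s$ to all of $Y'$. Your explicit framing of this as the dR-absolute analogue of Lemma \ref{lem1}, with Principle B replacing the Theorem of the fixed part, matches the paper's implicit parallelism.
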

\begin{proof}
We show that $G^{AH}_Y \subset G^{AH}_Z$, the other inclusion being clear.
Let $s \in Z(\mathbb{C})$. The condition $H_Y \subset G^{AH}_Z$ translates into $\mathcal{AH}_Z \subset (\mathbb{V}_s^{\otimes})^{H_Y}$.
Now every element of $v \in \mathcal{AH}_Z$ is fixed by $H_Y$ and is dR-absolute Hodge at the point $s \in Z(\mathbb{C})$. We apply Theorem \ref{PrincipleBnonsmooth} to show that $v$ extends to a global section over a finite étale cover $Y'$ of $Y$ which is dR-absolute Hodge at every point of $Y'$, and so $\mathcal{AH}_Z \subset \mathcal{AH}_Y$.
\end{proof}

\begin{prop}\label{HZisnormalinGAH}
The group $G_Z^{AH}$ is a (possibly non-connected) reductive group.
The algebraic monodromy group $H_Z$ is a normal subgroup of $G_Z^{AH}$.
\end{prop}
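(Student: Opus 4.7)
The plan is to prove the two assertions separately: reductivity via a standard polarization argument, and normality by adapting André's proof of Proposition \ref{normal1} to $G_Z^{AH}$, with Principle B playing the role that the theorem of the fixed part plays in the classical case.

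For reductivity, I would first verify that the polarization $Q \in (\mathbb{V}_s^{\vee})^{\otimes 2}$ is dR-absolute Hodge: this is immediate from the definition of a polarization of an absolute variation, which requires the polarizations on each $\mathbb{V}^{\sigma}$ to correspond under the comparison isomorphisms $\iota_{\sigma}$. Hence $Q \in \mathcal{AH}_Z$, and $G_Z^{AH}$ preserves $Q$. Moreover the Hodge cocharacter $h_s \colon \mathbb{S} \to GL(\mathbb{V}_{s,\mathbb{R}})$ factors through the Mumford--Tate group $\mathrm{MT}(\mathbb{V}_s) \subset G_Z \subset G_Z^{AH}$, so the Weil operator $C_s = h_s(i)$ lies in $G_Z^{AH}(\mathbb{R})$. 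The standard Cartan-involution criterion (cf.\ \cite{Deligne}, I.3.6) then shows that $G_Z^{AH}$ is a (possibly non-connected) reductive $\mathbb{Q}$-algebraic group.

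For normality, I would show that the Lie algebra $\mathfrak{h}_Z \subset \mathfrak{gl}(\mathbb{V}_s)$ is stable under the adjoint action of $G_Z^{AH}$. Since monodromy normalizes $H_Z$, the subspace $\mathfrak{h}_Z$ extends to a sub-local system of $\mathfrak{gl}(\mathbb{V})|_Z$ (after a finite étale cover), and by the theorem of the fixed part (Theorem \ref{fixedpartnonsm1}) is a sub-Hodge structure of weight zero. To encode this subspace by a single tensor I would pass to its top exterior power $\Lambda^d \mathfrak{h}_Z \subset \Lambda^d \mathfrak{gl}(\mathbb{V}_s)$, where $d = \dim \mathfrak{h}_Z$: it is a Hodge subline of type $(0,0)$ with a rational Hodge generator $\omega$. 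Since the monodromy acts on $\mathfrak{h}_Z$ by inner automorphisms of the semisimple Lie algebra $\mathfrak{h}_Z$ and hence with trivial determinant, $\omega$ extends to a flat global section over a finite étale cover of $Z$. Proving that $\omega$ is dR-absolute Hodge would then imply that $G_Z^{AH}$ fixes $\omega$, hence preserves the line $\Lambda^d \mathfrak{h}_Z$, hence preserves the subspace $\mathfrak{h}_Z$, yielding the desired normality.

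By Principle B (Theorem \ref{PrincipleBnonsmooth}), verifying that $\omega$ is dR-absolute Hodge reduces to checking the property at a single point. I expect the main obstacle to be establishing that $\iota_{\sigma}(\mathfrak{h}_Z \otimes \mathbb{C})$ coincides, as a subspace of $\mathfrak{gl}(\mathcal{V}^{\sigma}_{s^{\sigma}})$, with the complexification of the Lie algebra of the algebraic monodromy of $\mathbb{V}^{\sigma}$ on $Z^{\sigma}$. The subtlety is that the topological fundamental groups of $Z^{an}$ and $Z^{\sigma,an}$ are not naturally identified; however $\iota_{\sigma}$ is an isomorphism of algebraic flat bundles, and combined with the invariance of the étale fundamental group under base change of algebraically closed fields (in the spirit of Lemma \ref{monconj}), this identifies the two algebraic monodromies after base change to $\mathbb{C}$. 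Once this identification is in hand, the right-hand side is a $\mathbb{Q}$-sub-Hodge structure of $\mathfrak{gl}(\mathbb{V}^{\sigma}_{s^{\sigma}})$ by applying the theorem of the fixed part to $\mathbb{V}^{\sigma}$ on $Z^{\sigma}$, so its determinant line has a rational Hodge generator; identifying $\omega_{dR}^{\sigma}$ with a rational multiple of this generator completes the verification.
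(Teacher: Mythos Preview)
Your overall strategy is sound, and the key insight you isolate---that the de Rham comparison $\iota_{\sigma}$ identifies $\mathfrak{h}_Z \otimes \mathbb{C}$ with $\mathfrak{h}_{Z^{\sigma}} \otimes \mathbb{C}$---is exactly what the paper relies on as well. (Your reference to Lemma \ref{monconj} and \'etale fundamental groups is a slight detour: that lemma uses that the $\ell$-adic local system is defined over $\bar{\mathbb{Q}}$, which is not part of the data of a general absolute variation. The identification you want follows directly from the bijection between algebraic flat sections over finite \'etale covers induced by $\iota_{\sigma}$, as in the proof of Theorem \ref{PrincipleBnonsmooth}, with no $\ell$-adic input.) Your reductivity argument via the polarization and Cartan involution is correct.

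There is, however, a gap in the final step of your normality argument. You assert that $\omega_{dR}^{\sigma}$ is a \emph{rational} multiple of the rational Hodge generator $\omega^{\sigma}$ of $\Lambda^d \mathfrak{h}_{Z^{\sigma}}$, but the comparison $\iota_{\sigma}$ only relates the $\mathbb{C}$-structures: all you know is $\omega_{dR}^{\sigma} = c \cdot \omega^{\sigma}$ for some $c \in \mathbb{C}^{\times}$, and nothing forces $c \in \mathbb{Q}$. So $\omega$ itself need not be dR-absolute Hodge, and $G_Z^{AH}$ need not fix it. An easy fix is to replace $\omega$ by the orthogonal projector $p \in \mathrm{End}(\mathfrak{gl}(\mathbb{V}_s))$ onto $\mathfrak{h}_Z$ with respect to the polarization-induced form: since the polarizations are compatible via $\iota_{\sigma}$, one gets $p_{dR}^{\sigma} = p^{\sigma}$ on the nose (no scalar ambiguity), so $p$ \emph{is} dR-absolute Hodge and $G_Z^{AH}$ preserves $\mathfrak{h}_Z = \mathrm{Im}(p)$.

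The paper avoids this issue altogether by working with subspaces rather than a single tensor. It first identifies $G_Z^{AH}$ with $G_s^{AH}$ at a Hodge generic point $s$ (via Lemma \ref{lem2}) and obtains reductivity from the Tannakian description in Remark \ref{grouppoint1}. For normality it shows that the collection $\bigl(((\mathbb{V}^{\sigma}_{s^{\sigma}})^{\otimes(m,n)})^{H_{Z^{\sigma}}}\bigr)_{\sigma}$ is a sub-absolute Hodge structure of $\bigl((\mathbb{V}^{\sigma}_{s^{\sigma}})^{\otimes(m,n)}\bigr)_{\sigma}$; this only requires the complexified invariant subspaces to match under $\iota_{\sigma}$, which is precisely the identification above, and says nothing about any particular generator. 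The Tannakian group $G_s^{AH}$ then automatically preserves this subobject, and Andr\'e's argument concludes.
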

\begin{proof}
Again, the argument in the proof of Theorem \ref{PrincipleBnonsmooth} shows that $G_Z^{AH}$ does not change when we replace $Z$ by its smooth locus and we may therefore assume that $Z$ is smooth.
We claim that there exists a point $s \in Z(\mathbb{C})$ with $G_s^{AH}=G_Z^{AH}$. In fact, if we choose any Hodge generic point $s$, then
$H_Z \subset G_s \subset G_s^{AH}$ and the above Lemma \ref{lem2} gives $G_s^{AH}=G_Z^{AH}$.
Now the reductivity follows from Remark \ref{grouppoint1}.
Following the proof of (\cite{Andre}, Theorem 1), in order to show that $H_Z \unlhd G_Z^{AH}$ it is enough to show that $\left(\mathbb{V}_s^{\otimes (m,n)}\right)^{H_Z} \subset \mathbb{V}_s^{\otimes (m,n)} $ carries an action of $G_Z^{AH}=G_s^{AH}$.
This follows from the fact that the collection $\left(\left((\mathbb{V}_{s^{\sigma}}^{\sigma})^{\otimes (m,n)}\right)^{H_{Z^{\sigma}}}\right)_{\sigma}$ forms a sub-absolute Hodge structure of $\left((\mathbb{V}_{s^{\sigma}}^{\sigma})^{\otimes (m,n)}\right)_{\sigma}$ and is therefore preserved by $G_s^{AH}$.
\end{proof}

\section{dR-absolutely special subvarieties}

In this section we introduce the notion of dR-absolutely special subvariety, study its properties and prove that weakly non-factor special subvarieties are dR-absolutely special.

\subsection{Definition and first properties}
Let $(\mathbb{V}^{\sigma})_{\sigma}$ be an absolute variation of Hodge structure on a smooth irreducible quasi-projective complex algebraic variety $S$ and $Z \subset S$ a closed irreducible subvariety.
Since we do not know whether $G_Z = G_Z^{AH}$, it is natural to define a notion of \emph{dR-absolutely special} subvariety simply by replacing $G_Z$ in Definition \ref{definitionspecial1} by $G_Z^{AH}$.

\begin{mydef}\label{absspdef}
A closed irreducible algebraic subvariety $Z \subset S$ is called \emph{dR-absolutely special} if it is maximal among the closed irreducible algebraic subvarieties of $S$ with generic dR-absolute Mumford-Tate group $G_Z^{AH}$.
\end{mydef}

We mention a few properties of dR-absolutely special subvarieties:

\begin{prop} \label{absspecial1}
Let $Z$ be a dR-absolutely special subvariety.
\begin{enumerate}[(i)]
\item \label{both1}
$Z$ is special.
\item \label{conj1}
$Z^{\sigma}\subset S^{\sigma}$ is dR-absolutely special for all $\sigma \in \mathrm{Aut}(\mathbb{C} / \mathbb{Q})$.
\item \label{three1}
If $S$ is defined over a finite extension $K$ of ${\mathbb{Q}}$ and the variation is $K$-absolute, then $Z$ is defined over $\bar{\mathbb{Q}}$ and all its $\mathrm{Gal}(\bar{\mathbb{Q}}/K)$-conjugates are special.
\end{enumerate}
\end{prop}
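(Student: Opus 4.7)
The plan is to dispatch the three items in order, relying on the structural results for $G_Z^{AH}$ already established.

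For (\ref{both1}), suppose $Y \supset Z$ is a closed irreducible subvariety with $G_Y = G_Z$; the task is to deduce $Y = Z$. By Proposition \ref{normal1} applied to $Y$, the algebraic monodromy group $H_Y$ is contained in the derived subgroup of $G_Y$, hence in $G_Y = G_Z \subset G_Z^{AH}$. Lemma \ref{lem2} then yields $G_Y^{AH} = G_Z^{AH}$, and since $Z$ is dR-absolutely special this forces $Y = Z$.

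For (\ref{conj1}), I will use the comparison isomorphism $\iota_\sigma$ to transport generic dR-absolute Hodge tensors. At the chosen base point, $\iota_\sigma$ identifies $\mathcal{V}^\sigma_{s^\sigma}$ with $\mathcal{V}_s$ compatibly with the flat connection, and hence induces a bijection between flat sections of tensor powers of $\mathcal{V}^\sigma$ over $(Z^\sigma)^{an}$ and of $\mathcal{V}$ over $Z^{an}$. Using the symmetry of the dR-absolute Hodge condition (applying the definition with both $\sigma$ and $\sigma^{-1}$) one checks that this bijection carries $\mathcal{AH}_{Z^\sigma}$ onto $\mathcal{AH}_Z$, so $\iota_\sigma$ induces an isomorphism $G_Z^{AH} \otimes_\mathbb{Q} \mathbb{C} \cong G_{Z^\sigma}^{AH} \otimes_\mathbb{Q} \mathbb{C}$ compatible with $\mathbb{Q}$-structures. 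Then for any $Y \supset Z^\sigma$ with $G_Y^{AH} = G_{Z^\sigma}^{AH}$, the subvariety $Y^{\sigma^{-1}} \supset Z$ satisfies $G_{Y^{\sigma^{-1}}}^{AH} = G_Z^{AH}$, and dR-absolute speciality of $Z$ forces $Y^{\sigma^{-1}} = Z$, hence $Y = Z^\sigma$.

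For (\ref{three1}), parts (\ref{both1}) and (\ref{conj1}) combined with the $K$-absolute hypothesis (so $S^\sigma = S$ and $\mathbb{V}^\sigma = \mathbb{V}$ for every $\sigma \in \mathrm{Aut}(\mathbb{C}/K)$) imply that every conjugate $Z^\sigma$ is a special subvariety of $S$. Since there are only countably many special subvarieties of $S$ (a standard consequence of the countability of sub-Hodge data of $(G_S, \mathcal{D})$), the $\mathrm{Aut}(\mathbb{C}/K)$-orbit of $Z$ is countable, which forces $Z$ to be defined over $\bar{\mathbb{Q}}$. The $\mathrm{Gal}(\bar{\mathbb{Q}}/K)$-conjugates, obtained by lifting Galois elements to $\mathrm{Aut}(\mathbb{C}/K)$, are then of the form $Z^\sigma$ and are special by the previous parts.

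The main subtlety will be the verification in step (\ref{conj1}) that $\iota_\sigma$ genuinely sends dR-absolute Hodge tensors to dR-absolute Hodge tensors; this reduces to a cocycle-type compatibility of the comparisons $\iota_\tau$ under composition. Once that is in place, the remainder is formal, combining Lemma \ref{lem2} with the countability of special subvarieties.
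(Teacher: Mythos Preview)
Your argument is correct and matches the paper's proof: Lemma \ref{lem2} for part (\ref{both1}), the comparison isomorphism $G_Z^{AH}\otimes\mathbb{C}\cong G_{Z^\sigma}^{AH}\otimes\mathbb{C}$ for part (\ref{conj1}), and countability of special subvarieties for part (\ref{three1}). The phrase ``compatible with $\mathbb{Q}$-structures'' in your step (\ref{conj1}) is not quite right---the identification is $\sigma$-semilinear on the ambient de Rham space and does not respect the Betti $\mathbb{Q}$-lattices---but this is irrelevant to the argument, which only needs that $G_Z^{AH}$ and $G_{Y^{\sigma^{-1}}}^{AH}$ are $\mathbb{Q}$-subgroups of the same $GL(\mathbb{V}_s)$ with equal $\mathbb{C}$-points; the cocycle compatibility you flag is automatic once one defines the absolute variation on $S^\sigma$ by $(\mathbb{V}^{\tau\sigma})_\tau$ with comparisons $(\tau^{-1,*}\iota_\sigma)^{-1}\circ\iota_{\tau\sigma}$.
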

\begin{proof}
Suppose that $Y \supset Z$ is closed irreducible such that $G_Y = G_Z$. It follows that $H_Y \subset G_Z \subset G^{AH}_{Z}$, and therefore $G_Y^{AH} = G_Z^{AH}$ by Lemma \ref{lem2}.
We conclude that $Z=Y$ because $Z$ is dR-absolutely special. This shows that $Z$ is special.
Assertion (\ref{conj1}) follows from the fact that we have a natural isomorphism $G^{AH}_Z \otimes \mathbb{C} \cong G^{AH}_{Z^{\sigma}} \otimes \mathbb{C}$.
By the fact that there are only countably many special subvarieties of $S$ for $\mathbb{V}$, this implies (\ref{three1}).
\end{proof}

Note that dR-absolutely special subvarieties have the good arithmetic properties that conjecturally hold for special subvarieties.

Deligne's conjecture that Hodge classes are dR-absolute Hodge states that we have an equality of groups $G_Z = G_Z^{AH}$.
For dR-absolutely special subvarieties we can at least show that there does not exist any closed irreducible subvariety $Y \supset Z$ whose generic Mumford-Tate group contradicts this equality.

\begin{prop}\label{nogroup}
Let $Z$ be dR-absolutely special.
Then there does not exists a closed irreducible subvariety $Y \supset Z$ with the property
$$G_Z \subsetneq G_Y \subset G^{AH}_Z.$$
\end{prop}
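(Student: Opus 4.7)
The plan is to derive a contradiction from the hypothetical existence of such a $Y$ by showing that its generic dR-absolute Mumford-Tate group must coincide with that of $Z$, which will force $Y=Z$ and contradict the strict inclusion $G_Z\subsetneq G_Y$. The argument is essentially a one-line application of Lemma \ref{lem2} combined with the normality of the algebraic monodromy group inside the Mumford-Tate group (Proposition \ref{normal1}) and the maximality property in the definition of dR-absolutely special.

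More concretely, I would proceed as follows. Suppose for contradiction that $Y\supset Z$ is closed irreducible with $G_Z\subsetneq G_Y\subset G_Z^{AH}$. First, invoke Proposition \ref{normal1} to conclude that $H_Y$ is contained in the derived group of $G_Y$, hence a fortiori $H_Y\subset G_Y$. Combined with the hypothesis $G_Y\subset G_Z^{AH}$, this yields the inclusion $H_Y\subset G_Z^{AH}$. Next, apply Lemma \ref{lem2} to the pair $Z\subset Y$: the condition $H_Y\subset G_Z^{AH}$ is exactly what is required, and the conclusion is $G_Y^{AH}=G_Z^{AH}$.

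Now use the hypothesis that $Z$ is dR-absolutely special. Since $Y\supset Z$ is a closed irreducible subvariety whose generic dR-absolute Mumford-Tate group equals that of $Z$, the maximality property in Definition \ref{absspdef} forces $Y=Z$. But then $G_Y=G_Z$, contradicting the strict inclusion $G_Z\subsetneq G_Y$ in our hypothesis.

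There is no real obstacle here beyond assembling the pieces correctly; the proposition is a direct corollary of Lemma \ref{lem2} and the definition of dR-absolutely special. The only subtlety worth noting is that one uses $H_Y\subset G_Y$ (a consequence of André's normality theorem), rather than the stronger and in general false statement $G_Y\subset G_Y^{AH}\cap G_Z^{AH}$; the whole point of invoking Lemma \ref{lem2} rather than trying to compare Mumford-Tate groups directly is precisely to circumvent our ignorance of whether Hodge cycles are dR-absolute Hodge.
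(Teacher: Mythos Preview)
Your proof is correct and follows exactly the same route as the paper: deduce $H_Y\subset G_Z^{AH}$ from $G_Y\subset G_Z^{AH}$, apply Lemma \ref{lem2} to get $G_Y^{AH}=G_Z^{AH}$, and conclude $Y=Z$ from the maximality in Definition \ref{absspdef}. The only cosmetic difference is that you invoke Proposition \ref{normal1} to justify $H_Y\subset G_Y$, whereas the paper treats this inclusion as immediate; indeed it follows directly from the definitions (monodromy preserves generic Hodge tensors), and the full strength of Andr\'e's normality theorem is not needed here.
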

\begin{proof}
Let $Y \supset Z$ be a closed irreducible subvariety with $G_Y \subset G^{AH}_Z$. It follows that $H_Y \subset G^{AH}_{Z}$, and therefore Lemma \ref{lem2} implies $G_Y^{AH} = G_Z^{AH}$.
We conclude that $Z=Y$ because $Z$ is dR-absolutely special.
\end{proof}

For the next Proposition, suppose that $S$ is defined over $\bar{\mathbb{Q}}$ and the variation is a $\bar{\mathbb{Q}}$-absolute variation.

\begin{prop}\label{Qbarclosure}
Let $Z$ be a closed irreducible subvariety and $Y:= \overline{Z}^{Zar, \bar{\mathbb{Q}}}$
the smallest $\bar{\mathbb{Q}}$-subvariety containing $Z$.
Then $H_Z$ is a normal subgroup of $H_Y$, $G_Y$ and $G_Y^{AH}$. \end{prop}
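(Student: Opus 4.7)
The strategy is to reduce all three normality assertions to the single identity $G_Z^{AH} = G_Y^{AH}$. Granting this identity, Proposition~\ref{HZisnormalinGAH} gives $H_Z \trianglelefteq G_Z^{AH} = G_Y^{AH}$, and together with the inclusions $H_Z \subset H_Y \subset G_Y^{AH}$ and $G_Z \subset G_Y \subset G_Y^{AH}$ (the $G_Y^{AH}$-inclusions being a second application of Proposition~\ref{HZisnormalinGAH} to $Y$) this immediately yields normality of $H_Z$ in $H_Y$, in $G_Y$, and in $G_Y^{AH}$. By Lemma~\ref{monsm} and Remark~\ref{smoothpart} we may assume that $Z$ and $Y$ are smooth, and by Lemma~\ref{lem2} the desired identity reduces further to the single containment $H_Y \subset G_Z^{AH}$.

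To prove $H_Y \subset G_Z^{AH}$ I must show that every $v \in \mathcal{AH}_Z$ is also in $\mathcal{AH}_Y$, that is, $v$ extends to a $\pi_1(Y,s)$-invariant tensor, or equivalently to a flat section of $\mathcal{V}^{\otimes}$ over a finite \'etale cover of $Y$, which will then automatically be dR-absolute Hodge by Deligne's Principle~B (Theorem~\ref{PrincipleBnonsmooth}). The geometric key is the following consequence of the minimality of $Y$: since $Y$ is defined over $\bar{\mathbb{Q}}$ we have $Y^{\sigma} = Y$ for every $\sigma \in \mathrm{Aut}(\mathbb{C}/\bar{\mathbb{Q}})$, hence each conjugate $Z^{\sigma}$ sits inside $Y$, and the minimality of $Y$ forces the family $\{Z^{\sigma}\}_{\sigma}$ to be Zariski dense in $Y$. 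Given $v \in \mathcal{AH}_Z$, each Galois translate $v^{\sigma}$ lies in $\mathcal{AH}_{Z^{\sigma}}$, and the associated de Rham flat sections $v^{\sigma}_{dR}$ on $Z^{\sigma}$ are identified, via the comparison isomorphisms $\iota_{\sigma}$ of the $\bar{\mathbb{Q}}$-absolute variation, with the single de Rham section $v_{dR}$; one therefore has flat Hodge-theoretic data on a Zariski dense subset of $Y$.

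The principal obstacle is the globalization step: producing from this Galois-dense data a genuine flat section of $\mathcal{V}^{\otimes}$ over $Y$. My approach is to work with a $\bar{\mathbb{Q}}$-model of $\mathcal{V}$ and to exploit that $v_{dR}$, thanks to the $\bar{\mathbb{Q}}$-absolute structure and the dR-absolute Hodge hypothesis, is $\bar{\mathbb{Q}}$-rational in an appropriate sense; a $\bar{\mathbb{Q}}$-rational algebraic flat section on $Z$ then descends to a $\bar{\mathbb{Q}}$-rational flat section on the $\bar{\mathbb{Q}}$-Zariski closure $Y$. Once this extension $\tilde{v}_{dR}$ is constructed, Theorem~\ref{fixedpartnonsm1} applied to $Z$ ensures that $\tilde{v}_{dR}$ is Hodge at the points of $Z$, and Principle~B (Theorem~\ref{PrincipleBnonsmooth}) propagates the dR-absolute Hodge property from the single point $s$ to all of $Y$, yielding $v \in \mathcal{AH}_Y$ and hence the desired containment $H_Y \subset G_Z^{AH}$. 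The delicate point is making rigorous the descent of the flat sub-bundle generated by $v_{dR}$ from $Z$ to $Y$, which is precisely where the full strength of the dR-absolute Hodge hypothesis (as opposed to mere Hodgeness) is required, since it is the Galois-equivariance of $v_{dR}^{\sigma}$ across the dense family $\{Z^{\sigma}\}$ that forces $v_{dR}$ to be defined on the $\bar{\mathbb{Q}}$-closure.
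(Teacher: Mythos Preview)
Your first paragraph is exactly right: everything reduces to the identity $G_Z^{AH}=G_Y^{AH}$, and once that is established the chain $H_Z\subset H_Y\subset G_Y\subset G_Y^{AH}$ together with Proposition~\ref{HZisnormalinGAH} finishes the proof. The paper proceeds in precisely this way.

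Where you diverge from the paper is in proving $G_Z^{AH}=G_Y^{AH}$. You attempt to show $H_Y\subset G_Z^{AH}$ by extending each $v\in\mathcal{AH}_Z$ to a flat section over $Y$ via a $\bar{\mathbb{Q}}$-rationality/Galois-density argument. This is the step you yourself flag as ``delicate'', and indeed it is a genuine gap: the dR-absolute Hodge condition tells you that each $v_{dR}^{\sigma}$ is Hodge on $Z^{\sigma}$, but it does \emph{not} tell you that these conjugates glue to a single flat section of $\mathcal{V}^{\otimes}$ on the Zariski closure $Y$. A flat section on a Zariski-dense union of subvarieties has no reason to extend; flatness is a local analytic condition, not a Zariski-closed one, and the $Z^{\sigma}$ need not meet. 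Nothing in the dR-absolute Hodge hypothesis gives you $\bar{\mathbb{Q}}$-rationality of $v_{dR}$ itself.

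The paper avoids all of this with a one-line argument you are missing: choose $W\supset Z$ maximal with $G_W^{AH}=G_Z^{AH}$. Then $W$ is dR-absolutely special, hence defined over $\bar{\mathbb{Q}}$ by Proposition~\ref{absspecial1}(\ref{three1}). Since $Y$ is the \emph{smallest} $\bar{\mathbb{Q}}$-subvariety containing $Z$, we get $Y\subset W$, and the sandwich $Z\subset Y\subset W$ forces $G_Y^{AH}=G_Z^{AH}$. No descent of flat sections is needed; the field-of-definition result for dR-absolutely special subvarieties does all the work.
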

\begin{proof}
Let $W \supset Z $ be a maximal subvariety with $G_W^{AH} = G_Z^{AH}$. Then $W$ is dR-absolutely special and therefore defined over $\bar{\mathbb{Q}}$ by Proposition \ref{absspecial1}. In particular, $Y \subset W$ and thus $G_Y^{AH}= G_Z^{AH}$.
It follows from Proposition \ref{HZisnormalinGAH} that $H_Z$ is a normal subgroup of $G_Y^{AH}$.
Now the inclusions $H_Z \subset H_Y \subset G_Y \subset G_Y^{AH}$ show that $H_Z$ is also normal in $H_Y$ and $G_Y$.
\end{proof}

\begin{rem}
The normality of $H_Z$ in $H_Y$ was observed in the course of the proof of
(\cite{KOU}, Proposition 3.2).
\end{rem}

\subsection{dR-absolutely special subvarieties and period maps}
Let $Z \subset S$ be a closed irreducible subvariety.
Denote by $(G_Z^{AH}, \mathcal{D}_{Z}^{AH})$ and $(G_S^{AH}, \mathcal{D}_S^{AH})$ the Hodge data defined by $G_Z^{AH}$ and $G_S^{AH}$.
We want to describe the dR-absolutely special subvarieties of $S$ in terms of the period map
$$\Phi: S^{an} \to \Gamma \backslash \mathcal{D}_{S}^{AH}.$$
The inclusion $G_Z^{AH} \subset G_S^{AH}$ induces a morphism of Hodge varieties $$\iota:\Gamma_{Z}^{ AH} \backslash \mathcal{D}^{AH}_{Z} \to \Gamma \backslash \mathcal{D}_{S}^{AH}.$$
Here $\Gamma_{Z}^{AH} := \Gamma \cap G_Z^{AH}(\mathbb{Q})$.
\begin{prop}\label{absspperiod}
The subvariety $Z$ is dR-absolutely special if and only if $Z$ is a (complex analytic) irreducible component of $\Phi^{-1}(\iota(\Gamma_{Z}^{AH} \backslash \mathcal{D}^{AH}_{Z}))$.
\end{prop}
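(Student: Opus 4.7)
The plan is to mimic the classical characterization of special subvarieties via the period map (cf.\ the Cattani-Deligne-Kaplan theorem recalled above), replacing $G_Z$ by $G_Z^{AH}$ throughout and using Lemma \ref{lem2} where one would normally invoke Lemma \ref{lem1}.

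First I would establish the basic containment $\Phi(Z^{an}) \subset \iota(\Gamma_Z^{AH} \backslash \mathcal{D}_Z^{AH})$, and more generally $\Phi(Y^{an}) \subset \iota(\Gamma_Z^{AH} \backslash \mathcal{D}_Z^{AH})$ for any closed irreducible $Y \supset Z$ satisfying $G_Y^{AH} = G_Z^{AH}$. Fix $s \in Z(\mathbb{C})$. By definition of $\mathcal{AH}_Z$, every $v \in \mathcal{AH}_Z$ is, after parallel transport along any path in $Z^{an}$, a Hodge tensor at every point of $Z(\mathbb{C})$. Lifting $\Phi$ to the universal cover $\tilde{S}$ and arguing as in the proof of Lemma \ref{monsm}, the image of the preimage $\tilde{Z}$ in $\mathcal{D}_S^{AH}$ lies in the set of Hodge cocharacters fixing $\mathcal{AH}_Z$, which (after the obvious choice of base point on $\mathcal{D}_Z^{AH}$) is precisely $\mathcal{D}_Z^{AH} \subset \mathcal{D}_S^{AH}$. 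Descending to the quotient yields the claim for $Z$, and the same argument applied to $Y$ gives the general version.

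For the ``if'' direction, suppose $Z$ is a complex analytic irreducible component of $\Phi^{-1}(\iota(\Gamma_Z^{AH} \backslash \mathcal{D}_Z^{AH}))$, and let $Y \supset Z$ be closed irreducible with $G_Y^{AH} = G_Z^{AH}$. The first step shows $Y^{an} \subset \Phi^{-1}(\iota(\Gamma_Z^{AH} \backslash \mathcal{D}_Z^{AH}))$; since $Y$ is irreducible and contains $Z$, it is contained in the irreducible component $Z$, so $Y = Z$. Hence $Z$ is dR-absolutely special. For the ``only if'' direction, suppose $Z$ is dR-absolutely special, and let $Y$ be the irreducible component of $\Phi^{-1}(\iota(\Gamma_Z^{AH} \backslash \mathcal{D}_Z^{AH}))$ containing $Z$. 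Evaluating the period map at a Hodge generic point of $Y$ shows that $G_Y$ is contained in a $\Gamma$-conjugate of $G_Z^{AH}$; replacing that conjugate by $G_Z^{AH}$ itself, we have $H_Y \subset G_Y \subset G_Z^{AH}$. Lemma \ref{lem2} then gives $G_Y^{AH} = G_Z^{AH}$, and maximality of $Z$ forces $Y = Z$.

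The main technical point is the first step, factoring $\Phi|_{Z^{an}}$ through $\iota$: one has to keep careful track of base points, monodromy conjugation, and the fact that the definition of $G_Z^{AH}$ involves parallel transport along paths in $Z^{an}$ (where $Z$ may be singular). Once this is in place, both implications reduce to a direct application of Lemma \ref{lem2}, which plays exactly the role that Lemma \ref{lem1} plays in the classical period-map characterization of special subvarieties.
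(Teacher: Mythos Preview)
Your approach is essentially the paper's: show $\Phi(Y^{an})\subset\iota(\Gamma_Z^{AH}\backslash\mathcal{D}_Z^{AH})$ whenever $G_Y^{AH}=G_Z^{AH}$, and for the converse apply Lemma~\ref{lem2} to the component containing $Z$. Two small points are worth tightening. First, in the ``only if'' direction you speak of $G_Y$ and a Hodge generic point of $Y$ before knowing that the complex analytic component $Y$ is \emph{algebraic}; the paper invokes \cite{BKT}, Theorem~1.6 for this (equivalently CDK, since $\iota(\Gamma_Z^{AH}\backslash\mathcal{D}_Z^{AH})$ is a special subvariety of the ambient Hodge variety), and you should cite it explicitly rather than leave it implicit. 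Second, the detour through a $\Gamma$-conjugate of $G_Z^{AH}$ is unnecessary: since $Y\supset Z$ and the lift of $Z$ already lands in $\mathcal{D}_Z^{AH}$ itself, the connected lift of $Y$ through that of $Z$ does too, giving $G_Y\subset G_Z^{AH}$ directly.
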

\begin{proof}
Clearly the restriction of the period map $\Phi$ to any subvariety $Y \supset Z$ with $G_Y \subset G_Y^{AH}=G_Z^{AH}$ factors through $\iota(\Gamma_{Z}^{AH} \backslash \mathcal{D}^{AH}_{Z})$.
Conversely, let $W$ be a complex analytic irreducible component of $\Phi^{-1}(\iota(\Gamma_{Z}^{AH} \backslash \mathcal{D}^{AH}_{Z}))$ containing $Z$.
It follows from (\cite{BKT}, Theorem 1.6) that $W$ is algebraic, and it satisfies $G_W \subset G_Z^{AH}$. Applying Lemma \ref{lem2}, we see that $G_W^{AH} = G_Z^{AH}$.
As $Z$ is dR-absolutely special this gives $Z=W$.
\end{proof}

Note that $\Phi^{-1}(\iota(\Gamma_{Z}^{AH} \backslash \mathcal{D}^{AH}_{Z}))$ may have several (but finitely many) irreducible components and therefore $Z$ is not necessarily contained in a unique smallest dR-absolutely special subvariety.
Still, $Z$ is contained in a unique irreducible component of an intersection of irreducible components of $\Phi^{-1}(\iota(\Gamma_{Z}^{AH} \backslash \mathcal{D}^{AH}_{Z}))$.
Components of this form will be called
\emph{dR-absolutely special intersections}.

\begin{cor}
Every subvariety $Z$ is contained in a unique smallest dR-absolutely special intersection, called the \emph{dR-absolutely special closure} of $Z$.
\end{cor}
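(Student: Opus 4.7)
My plan is to construct the dR-absolutely special closure of $Z$ as the irreducible component through $Z$ of a well-chosen finite intersection of natural algebraic sets, and then verify the universal property directly from the definition of dR-absolutely special intersection.

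First, for each closed irreducible subvariety $W \subset S$ containing $Z$, I would set $T_W := \Phi^{-1}(\iota(\Gamma^{AH}_W \backslash \mathcal{D}^{AH}_W))$, which is algebraic by (\cite{BKT}, Theorem 1.6) as already used in the proof of Proposition \ref{absspperiod}. Since $Z$ is irreducible and contained in $T_W$, there is a unique irreducible component $V_W \subset T_W$ containing $Z$. Next, the Noetherianity of $S$ guarantees that there exist finitely many $W_1, \ldots, W_k \supset Z$ such that
$$V_{W_1} \cap \cdots \cap V_{W_k} = \bigcap_{W \supset Z} V_W.$$
I then let $C$ be the unique irreducible component of this finite intersection containing $Z$ (well-defined by irreducibility of $Z$). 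By the definition preceding the statement, $C$ is a dR-absolutely special intersection containing $Z$.

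For the universal property, I would take an arbitrary dR-absolutely special intersection $C' \supset Z$ and express it as an irreducible component of a finite intersection $V'_1 \cap \cdots \cap V'_l$, where each $V'_j$ is an irreducible component of some $T_{W'_j}$. The chain $Z \subset C' \subset V'_j$ together with the uniqueness of the irreducible component of $T_{W'_j}$ passing through $Z$ forces $V'_j = V_{W'_j}$, so every $V'_j$ belongs to the family $\{V_W\}_{W \supset Z}$. Consequently $V_{W_1} \cap \cdots \cap V_{W_k} \subset V'_1 \cap \cdots \cap V'_l$, and since $C$ is irreducible and contains $Z$, it is contained in the unique component of the right-hand side passing through $Z$, namely $C'$. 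This yields $C \subset C'$, establishing both existence and uniqueness.

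The only point requiring care is the reduction to a finite intersection, but this is automatic from Noetherianity of the Zariski topology on $S$. I therefore do not expect any serious obstacle: the corollary is a formal consequence of Proposition \ref{absspperiod} and the definition of dR-absolutely special intersection, once one packages the collection $\{V_W\}_{W \supset Z}$ correctly.
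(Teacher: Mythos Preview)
Your argument rests on the assertion that ``since $Z$ is irreducible and contained in $T_W$, there is a \emph{unique} irreducible component $V_W \subset T_W$ containing $Z$.'' This is false: irreducibility of $Z$ only guarantees that $Z$ lies in \emph{some} component of $T_W$, not in exactly one. For instance, the origin in $\mathbb{A}^2$ lies in both irreducible components of $\{xy=0\}$. This is precisely the phenomenon the paper flags in the sentence immediately preceding the corollary (``$\Phi^{-1}(\iota(\Gamma_{Z}^{AH}\backslash\mathcal{D}_Z^{AH}))$ may have several \ldots\ components and therefore $Z$ is not necessarily contained in a unique smallest dR-absolutely special subvariety''). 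Your universal-property step repeats the same error when you write ``the uniqueness of the irreducible component of $T_{W'_j}$ passing through $Z$ forces $V'_j = V_{W'_j}$.''

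The paper's (implicit) argument is both simpler and avoids this pitfall: one works only with the single set $T_Z=\Phi^{-1}(\iota(\Gamma_Z^{AH}\backslash\mathcal{D}_Z^{AH}))$, takes \emph{all} of its finitely many irreducible components that contain $Z$, and intersects those. Ranging over all $W\supset Z$ as you do is redundant anyway, since $G_Z^{AH}\subset G_W^{AH}$ gives $T_Z\subset T_W$, so the components of $T_Z$ already refine those of every $T_W$. The fix to your argument is therefore not to choose one $V_W$ per $W$ but to intersect the full finite set of dR-absolutely special subvarieties containing $Z$ that arise as components of $T_Z$; the Noetherian reduction and the passage to the component through $Z$ then proceed as you indicate.
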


\subsection{Weakly non-factor subvarieties}

Weakly non-factor subvarieties are a class of subvarieties that satisfy a certain monodromy condition introduced in (\cite{KOU}, Definition 1.10).
Their significance for the arithmetic properties of special subvarieties was already established in \cite{KOU}.
In this section, we prove a slight strengthening of their result: any special subvariety which is weakly non-factor is in fact dR-absolutely special.

\begin{mydef}[\cite{KOU}, Definition 1.10]
A closed irreducible subvariety $Z \subset S$ is called \emph{weakly non-factor} if it is not contained in a closed irreducible $Y \subset S$ such that $H_{Z}$ is a strict normal subgroup of $H_{Y}$.
\end{mydef}

\begin{thm}\label{wswnf1}
Assume $Z \subset S$ is weakly special and weakly non-factor. Then $Z$ is dR-absolutely special.
\end{thm}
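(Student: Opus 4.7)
The plan is to suppose that $Y \supset Z$ is a closed irreducible subvariety with $G_Y^{AH} = G_Z^{AH}$ and deduce $Y = Z$ by combining the weakly special and weakly non-factor hypotheses with the normality statement in Proposition \ref{HZisnormalinGAH}.

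First I would argue that $H_Z = H_Y$. Since $Z \subset Y$ we automatically have $H_Z \subset H_Y$, and the containment $H_Y \subset G_Y^{AH} = G_Z^{AH}$ is immediate. By Proposition \ref{HZisnormalinGAH}, $H_Z$ is a normal subgroup of $G_Z^{AH}$, and normality is preserved by passing to the intermediate subgroup $H_Y$. Thus $H_Z \triangleleft H_Y$. If this inclusion were strict, then $Y \supsetneq Z$ would be a closed irreducible subvariety exhibiting $H_Z$ as a strict normal subgroup of $H_Y$, contradicting the weakly non-factor assumption on $Z$. So $H_Z = H_Y$.

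Now the weakly special hypothesis on $Z$ says precisely that $Z$ is maximal among closed irreducible subvarieties with algebraic monodromy group equal to $H_Z$. Since $Y \supset Z$ satisfies $H_Y = H_Z$, we conclude $Y = Z$, which proves $Z$ is dR-absolutely special.

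I do not foresee a genuine obstacle here: the proof is essentially a bookkeeping exercise that bundles together three statements already in the paper (the normality of $H_Z$ in $G_Z^{AH}$ from Proposition \ref{HZisnormalinGAH}, the definition of weakly non-factor, and the definition of weakly special). The only subtle point worth highlighting in the write-up is the transitivity of normality step: that $H_Z \triangleleft G_Z^{AH}$ together with $H_Z \subset H_Y \subset G_Z^{AH}$ gives $H_Z \triangleleft H_Y$. Everything else is a direct unwinding of definitions.
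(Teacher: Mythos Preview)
Your proof is correct and follows essentially the same route as the paper: assume $G_Y^{AH}=G_Z^{AH}$, use Proposition~\ref{HZisnormalinGAH} to get $H_Z \trianglelefteq H_Y$, invoke weakly non-factor to force $H_Z=H_Y$, and then weakly special to conclude $Z=Y$. The only cosmetic difference is that the paper states that both $H_Z$ and $H_Y$ are normal in $G_Z^{AH}$, whereas you (correctly) note that normality of $H_Z$ alone already suffices for the intermediate-subgroup argument.
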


\begin{proof}
Assume $Y \supset Z$ is a closed irreducible subvariety such that $G^{AH}_{Y} = G^{AH}_{Z}$. By Proposition \ref{HZisnormalinGAH}, the groups $H_Z$ and $H_{Y}$ are both normal in $G_Y^{AH}=G_Z^{AH}$, and therefore $H_{Z}$ is a normal subgroup of $H_{Y}$. The fact that $Z$ is weakly non-factor now implies $H_Z=H_Y$. We conclude that $Z=Y$ since $Z$ is weakly special. Thus $Z$ is dR-absolutely special.
\end{proof}
As a consequence we see that Conjecture \ref{conjabssp} holds true for weakly non-factor subvarieties.
\begin{cor}\label{swnf}
Assume $Z \subset S$ is a special subvariety which is weakly non-factor. Then $Z$ is dR-absolutely special.
\end{cor}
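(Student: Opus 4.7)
The plan is to observe that this corollary follows immediately by combining Theorem \ref{wswnf1} with the fact that every special subvariety is weakly special. Since Theorem \ref{wswnf1} has already proved the stronger statement that any \emph{weakly} special subvariety which is weakly non-factor is dR-absolutely special, all that remains is to upgrade the hypothesis from ``special'' to ``weakly special''.

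For this upgrade I would simply invoke Lemma \ref{simpliesws}, which asserts exactly that any special subvariety is weakly special. Thus if $Z \subset S$ is special and weakly non-factor, then $Z$ is in particular weakly special and weakly non-factor, and Theorem \ref{wswnf1} applies directly to give that $Z$ is dR-absolutely special. There are no auxiliary computations to carry out and no additional hypotheses to verify.

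Since the argument is a one-step deduction from results already in place, there is no genuine obstacle. The only thing worth keeping in mind is that the chain of implications relies crucially on Proposition \ref{HZisnormalinGAH} (used inside the proof of Theorem \ref{wswnf1}) to put both $H_Z$ and $H_Y$ simultaneously as normal subgroups of $G^{AH}_Z = G^{AH}_Y$, which is what allows the weakly non-factor hypothesis to force $H_Z = H_Y$; but this work has already been done upstream and is not revisited here.
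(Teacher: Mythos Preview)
Your proposal is correct and matches the paper's approach exactly: the corollary is an immediate consequence of Theorem \ref{wswnf1} together with Lemma \ref{simpliesws} (special implies weakly special), and the paper treats it as such without further comment.
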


Suppose $S$ is defined over $\bar{\mathbb{Q}}$ and the variation $(\mathbb{V}^{\sigma})_{\sigma}$ is $\bar{\mathbb{Q}}$-absolute. In this case, Corollary \ref{swnf} and Proposition \ref{absspecial1} imply that a special, weakly non-factor subvariety is defined over $\bar{\mathbb{Q}}$ and all its Galois conjugates are special. This was already proven in \cite{KOU}.

Write $G_S^{ad} = G_1 \times G_2 \times... \times G_n$ as a product of simple factors. This gives rise to a product decomposition of the Hodge variety
\begin{equation}\label{decompositionHodgevariety} \Gamma \backslash \mathcal{D} =\Gamma_1 \backslash \mathcal{D}_1 \times \Gamma_2 \backslash \mathcal{D}_2 \times...\times \Gamma_n \backslash \mathcal{D}_n. \end{equation}
\begin{cor}\label{maxspprpos}
Let $Z \subset S$ be a maximal strict special subvariety with the property that the projection of $\Phi(Z^{an})$ to each simple factor $\Gamma_i \backslash \mathcal{D}_i$ is positive dimensional. Then $Z$ is dR-absolutely special.
\end{cor}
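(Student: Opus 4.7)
The plan is to verify that $Z$ is weakly non-factor and then invoke Corollary~\ref{swnf}. Assume for contradiction that there exists a closed irreducible subvariety $Y \supset Z$ with $H_Z$ a strict normal subgroup of $H_Y$, and let $W$ denote the smallest special subvariety of $S$ containing $Y$ (so $G_W = G_Y$).

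First, if $W \subsetneq S$, then $W$ is a strict special subvariety properly containing $Z$, and $\Phi(W^{\mathrm{an}}) \supset \Phi(Z^{\mathrm{an}})$ forces the projection of $\Phi(W^{\mathrm{an}})$ to each simple factor $\Gamma_i \backslash \mathcal{D}_i$ to remain positive dimensional. This contradicts the maximality of $Z$ among strict special subvarieties with the projection property. We may therefore assume $W = S$, i.e. $G_Y = G_S$.

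In this case, Proposition~\ref{normal1} yields $H_Y \lhd G_S^{\mathrm{der}}$, so its image $\bar H_Y$ in $G_S^{\mathrm{ad}} = \prod_{i=1}^n G_i$ is a connected normal $\mathbb{Q}$-subgroup, hence of the form $\prod_{i \in I_Y} G_i$ for some subset $I_Y \subset \{1, \dots, n\}$. Similarly, $\bar H_Z$ is a connected normal $\mathbb{Q}$-subgroup of $\bar H_Y$ (as $H_Z \lhd H_Y$) and therefore equals $\prod_{i \in I_Z} G_i$ with $I_Z \subset I_Y$. The pivotal step is to promote the strict inclusion $H_Z \subsetneq H_Y$ to $I_Z \subsetneq I_Y$: the groups $H_Z$ and $H_Y$ are semisimple (being normal in the derived groups of reductive Mumford--Tate groups), so the projections $H_Z \twoheadrightarrow \bar H_Z$ and $H_Y \twoheadrightarrow \bar H_Y$ have finite central kernels, and $\dim H_Z < \dim H_Y$ then gives $\dim \bar H_Z < \dim \bar H_Y$.

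Picking any $i_0 \in I_Y \setminus I_Z$, the image of $H_Z$ in the simple factor $G_{i_0}$ is trivial. Since $Z$ is special and hence weakly special by Lemma~\ref{simpliesws}, Theorem~\ref{wsperiodmap1} places $\Phi(Z^{\mathrm{an}})$ inside a weakly special subvariety of the ambient Hodge variety; its projection to $\Gamma_{i_0} \backslash \mathcal{D}_{i_0}$ is again weakly special with trivial algebraic monodromy, hence a single point. This contradicts the hypothesis that the projection of $\Phi(Z^{\mathrm{an}})$ to each factor is positive dimensional. The main obstacle I anticipate is the semisimplicity and dimension bookkeeping in the second case, which is what converts the strict normal inclusion $H_Z \lhd H_Y$ into a strict inclusion of images inside $G_S^{\mathrm{ad}}$.
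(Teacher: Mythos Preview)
Your proof is correct and follows essentially the same strategy as the paper: both show $Z$ is weakly non-factor by reducing to $G_Y = G_S$, pushing the strict inclusion $H_Z \subsetneq H_Y$ into the adjoint group $G_S^{ad} = \prod_i G_i$, and extracting a simple factor on which $H_Z$ has trivial image, contradicting the positive-dimensional projection hypothesis. The paper's version is terser---it first argues $H_Y = H_S$ and then that $H_Z$ is a proper normal subgroup of $H_S$---whereas you compare $\bar H_Z$ and $\bar H_Y$ directly; one cosmetic point is that the final step (trivial monodromy in the $i_0$-th factor forces the projection of $\Phi(Z^{an})$ to be a point) is really the Theorem of the fixed part (Theorem~\ref{fixedpartnonsm1}) rather than Theorem~\ref{wsperiodmap1}.
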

\begin{proof}
We prove that $Z$ is weakly non-factor to apply Theorem \ref{wswnf1}.
Let $Y\supset Z$ be such that $H_Z$ is a strict normal subgroup of $H_Y$. Then as $Z$ is a maximal strict special subvariety we have the equality $G_Y = G_S$. It follows that $H_Y \unlhd H_S$ and the fact that the projection is positive dimensional on each simple factor implies that $H_Y= H_S$. Thus $H_Z$ is a proper normal subgroup of $H_S$. This contradicts the fact that the projection of $Z$ to each simple factor is positive dimensional.
\end{proof}

If $G_S^{ad}$ is simple, this reduces to the following example taken from (\cite{KOU}, Corollary 1.13).
An irreducible subvariety $Z \subset S$ is called \emph{of positive period dimension} if $\Phi(Z^{an})$ is not a point, or equivalently, if the algebraic monodromy group $H_Z$ is non-trivial.

\begin{cor}\label{exampleKOU}
Suppose that $G_{S}^{ad}$ is simple.
Let $Z \subset S$ be a strict special subvariety which is of positive period dimension, and maximal for these properties.
Then $Z$ is dR-absolutely special.
\end{cor}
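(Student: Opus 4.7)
The plan is simply to observe that Corollary \ref{exampleKOU} is the special case $n=1$ of Corollary \ref{maxspprpos}. Indeed, if $G_S^{ad}$ is simple then the product decomposition $G_S^{ad} = G_1 \times \cdots \times G_n$ has only one factor, and the decomposition \eqref{decompositionHodgevariety} of the ambient Hodge variety degenerates to a single factor $\Gamma \backslash \mathcal{D}$. Consequently, the condition appearing in Corollary \ref{maxspprpos}, namely that the projection of $\Phi(Z^{an})$ to each simple factor be positive dimensional, reduces to the single condition that $\Phi(Z^{an})$ itself is positive dimensional. By definition this is exactly the requirement that $Z$ be of positive period dimension (equivalently, $H_Z \neq 1$).

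With this observation in hand, the hypotheses of the present corollary, that $Z$ is a maximal strict special subvariety which is of positive period dimension, coincide verbatim with the hypotheses of Corollary \ref{maxspprpos} in the case $n=1$. Applying Corollary \ref{maxspprpos} then yields that $Z$ is dR-absolutely special.

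There is essentially no obstacle here: the corollary is a direct specialization of the previous one, and the only (minor) point to verify is the translation between the two formulations of the positive-dimensionality condition, which is immediate from the definitions of the period map and of positive period dimension recalled just before the statement. No additional monodromy or Hodge-theoretic input beyond Theorem \ref{wswnf1} (already used to prove Corollary \ref{maxspprpos}) is required.
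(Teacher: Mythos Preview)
Your proposal is correct and matches the paper's treatment exactly: the paper presents Corollary~\ref{exampleKOU} as the $n=1$ specialization of Corollary~\ref{maxspprpos}, with the single-factor condition on $\Phi(Z^{an})$ reducing to positive period dimension. One small remark on phrasing: the hypothesis in Corollary~\ref{exampleKOU} is that $Z$ is maximal \emph{among strict special subvarieties of positive period dimension}, not that $Z$ is a maximal strict special subvariety which happens to have positive period dimension; but since any special subvariety strictly containing such a $Z$ automatically has positive period dimension, the two readings coincide and your reduction goes through unchanged.
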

\begin{rem}
Instead of the adjoint group $G_S^{ad}$ one can also assume that the derived group $G_S^{der}$ is simple. Since $G_S^{der}$ is an extension of $G_S^{ad}$ by a finite group, the simpleness of $G_S^{der}$ is equivalent to that of $G_S^{ad}$. As in \cite{KOU}, we use the adjoint group because it appears naturally in the decomposition (\ref{decompositionHodgevariety}).
\end{rem}

Denote by $HL_{pos}$ the Hodge locus of positive period dimension, which is defined as the union of all strict special subvarieties of $S$ which are of positive period dimension (\cite{KO}, Definition 1.4).
Then the Corollary shows that if $G_S^{ad}$ is simple, then $HL_{pos}$ is a countable union of dR-absolutely special subvarieties.
In particular, $HL_{pos}$ is cut out by dR-absolute Hodge cycles.

\section{Applications}

We give applications to several arithmetic questions in Hodge theory.
As before, we let $(\mathbb{V}^{\sigma})_{\sigma}$ be an absolute variation of Hodge structure on a smooth irreducible quasi-projective complex algebraic variety $S$.

\subsection{dR-absolutely special subvarieties and typical intersections}

The property that all generic Hodge cycles on a special subvariety $Z$ are dR-absolute Hodge (i.e. $G_Z = G_Z^{AH}$) implies that $Z$ is dR-absolutely special, but the converse does not hold.
For instance, note that for the examples of dR-absolutely special subvarieties given by Corollary \ref{exampleKOU}, we cannot determine the group $G_Z^{AH}$.
In this section we will see that for a dR-absolutely special subvariety we can give an upper bound for the "difference" between $G_Z$ and $G_Z^{AH}$ in terms of the Hodge codimension.
The strength of this bound depends on how close $Z$ is to being a \emph{typical intersection}.

\begin{mydef}[\cite{BKU}, Definition 4.1]
Let $Z\subset S$ be a closed irreducible subvariety. The \emph{Hodge codimension} of $Z$ is defined to be
$$ Hcd(Z) := \dim \Gamma_Z \backslash \mathcal{D}_Z - \dim \Phi(Z^{an}),$$
where $\Gamma_Z \backslash \mathcal{D}_Z $ is the Hodge variety for the generic Mumford-Tate group $G_Z$ of $Z$.
\end{mydef}

\begin{mydef}[\cite{BKU}, Definition 4.2]\label{defatypicalintersectionabssp}
A closed irreducible subvariety $Z \subset S$ is called \emph{atypical} if
$$ Hcd(S) > Hcd(Z). $$
Otherwise, $Z$ is called \emph{typical}.
\end{mydef}

Note that if $Z$ is special, the non-strict inequality $Hcd(S) \ge Hcd(Z)$ always holds.
\begin{rem}
Strictly speaking, one has to modify Definition \ref{defatypicalintersectionabssp} slightly to take possible singularities of the image $\Phi(S^{an})$ of the period mapping into account, compare (\cite{BKU}, Definition 4.2).
\end{rem}

\begin{mydef}\label{defect}
For a closed irreducible subvariety $Z$, we define the \emph{absolute Hodge defect} to be $$\delta_{AH}(Z):= \dim \Gamma_Z^{AH} \backslash \mathcal{D}_Z^{AH} - \dim \Gamma_Z \backslash \mathcal{D}_Z.$$
\end{mydef}

\begin{thm}\label{bound}
Let $Z$ be a dR-absolutely special subvariety and assume that $G_S^{AH}=G_S$, i.e. all generic Hodge cycles on $S$ are dR-absolute Hodge.
Then $$\delta_{AH}(Z) \le Hcd(S)-Hcd(Z).$$
\end{thm}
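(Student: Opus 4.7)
The plan is to unfold the definitions into a dimension count in the period domain and then invoke an excess-intersection bound. Substituting the definitions of $\delta_{AH}(Z)$, $Hcd(S)$, and $Hcd(Z)$, the target inequality is equivalent to
\[
\dim \Phi(Z^{an})\ge \dim \Phi(S^{an})+\dim \mathcal{D}_Z^{AH}-\dim \mathcal{D}_S,
\]
so the real task is a lower bound on $\dim \Phi(Z^{an})$. The assumption $G_S=G_S^{AH}$ ensures that the period map $\Phi\colon S^{an}\to \Gamma\backslash\mathcal{D}_S$ already has target the Hodge variety for $G_S^{AH}$, and that $\mathcal{D}_Z^{AH}\subset \mathcal{D}_S$ is a complex submanifold via $G_Z^{AH}\subset G_S^{AH}=G_S$.

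By Proposition \ref{absspperiod}, $Z$ is an irreducible component of $\Phi^{-1}(\iota(\Gamma_Z^{AH}\backslash \mathcal{D}_Z^{AH}))$. I pass to the universal cover: for the lifted period map $\tilde{\Phi}\colon\tilde S\to \mathcal{D}_S$ and an irreducible component $\tilde Z\subset\tilde S$ lying over $Z$, translating $\mathcal{D}_Z^{AH}$ by a suitable element of $\Gamma$ makes $\tilde Z$ an irreducible analytic component of $\tilde{\Phi}^{-1}(\mathcal{D}_Z^{AH})$. I then pick a point $\tilde z_0\in \tilde Z$ lying over a smooth point of $Z$ at which (a) $d\tilde{\Phi}$ has maximal rank $\dim \Phi(S^{an})$ and (b) $d(\tilde{\Phi}|_{\tilde Z})$ has maximal rank $\dim \Phi(Z^{an})$; both conditions hold on a dense open subset of $\tilde Z$.

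By the rank theorem, after shrinking, a neighborhood $U$ of $\tilde z_0$ is mapped via $\tilde{\Phi}$ as a holomorphic submersion with connected fibers onto a complex submanifold $X\subset V$ of dimension $\dim \Phi(S^{an})$, where $V\subset \mathcal{D}_S$ is an open neighborhood of $\tilde{\Phi}(\tilde z_0)$. For such a submersion, the assignment $C\mapsto \tilde{\Phi}^{-1}(C)\cap U$ is a bijection between irreducible components of a closed analytic subset of $X$ and those of its preimage in $U$. Applied to $X\cap\mathcal{D}_Z^{AH}$, this identifies $\tilde{\Phi}(\tilde Z\cap U)$ with an irreducible component of $X\cap\mathcal{D}_Z^{AH}$, whose dimension equals $\dim \Phi(Z^{an})$ by (b).

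The final step is the excess-intersection bound: for the intersection of the analytic subvariety $X$ with the smooth submanifold $\mathcal{D}_Z^{AH}\cap V$ in the smooth ambient $V$, every irreducible component has dimension at least $\dim X+\dim \mathcal{D}_Z^{AH}-\dim V$, which rearranges into the displayed inequality above and hence yields $\delta_{AH}(Z)\le Hcd(S)-Hcd(Z)$. The main technical point I expect will be the local submersion reduction: checking that the generic point $\tilde z_0$ can be chosen to satisfy both rank conditions simultaneously, and that under the submersion bijection the irreducible-component status of $\tilde Z\cap U$ on the source side transfers to that of $\tilde{\Phi}(\tilde Z\cap U)$ on the target side, so that the excess-intersection bound really controls $\dim \Phi(Z^{an})$ and not merely the dimension of some larger component of $X\cap\mathcal{D}_Z^{AH}$ containing it.
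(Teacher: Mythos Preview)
Your proof is correct and follows essentially the same approach as the paper: both invoke Proposition~\ref{absspperiod} to identify $Z$ as an irreducible component of $\Phi^{-1}(\iota(\Gamma_Z^{AH}\backslash\mathcal{D}_Z^{AH}))$ and then extract the codimension inequality $\dim\mathcal{D}_S-\dim\Phi(S^{an})\ge\dim\mathcal{D}_Z^{AH}-\dim\Phi(Z^{an})$, which rearranges to the claim. The paper simply asserts this inequality as a direct consequence of that description, whereas you supply the local analytic justification (rank theorem, submersion bijection on components, excess-intersection bound) that underlies it.
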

\begin{proof}
It follows the description of dR-absolutely special subvarieties in Proposition \ref{absspperiod} that
$$ \dim \Gamma\backslash \mathcal{D}_S -\dim \Phi(S^{an}) \ge \dim \Gamma_Z^{AH} \backslash \mathcal{D}_Z^{AH} - \dim \Phi(Z^{an}).$$
We get

\begin{eqnarray*}
\delta_{AH}(Z) &= & \dim \Gamma_Z^{AH} \backslash \mathcal{D}_Z^{AH} - \dim \Gamma_Z \backslash \mathcal{D}_Z \\
& \le & (\dim \Gamma\backslash \mathcal{D}_S -\dim \Phi(S^{an})) - (\dim \Gamma_Z \backslash \mathcal{D}_Z - \dim \Phi(Z^{an})) \\ & = & Hcd(S)-Hcd(Z).
\end{eqnarray*}

\end{proof}

\begin{rem} In general, without assuming that all Hodge cycles on $S$ are dR-absolute Hodge, we get the inequality $$\delta_{AH}(Z) - \delta_{AH}(S) \le Hcd(S)-Hcd(Z).$$
\end{rem}

\begin{cor}
Assume that $G_S =G_S^{AH}$.
Let $Z$ be a dR-absolutely special subvariety which is a typical intersection.
Then $\delta_{AH}(Z)= 0$.
\end{cor}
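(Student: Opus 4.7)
The plan is to deduce this corollary directly from Theorem \ref{bound}, using that ``typical'' forces the right-hand side of the bound to vanish while nonnegativity of $\delta_{AH}$ forces the left-hand side to be nonnegative.

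First I would unpack the definition of typical: since $Z$ is a (dR-absolutely) special subvariety, it is in particular special, and the remark following Definition \ref{defatypicalintersectionabssp} notes that in this case the inequality $Hcd(S) \ge Hcd(Z)$ holds automatically. Combining this with the hypothesis that $Z$ is typical (i.e.\ $Hcd(S) \le Hcd(Z)$) gives the equality $Hcd(S) = Hcd(Z)$. Applying Theorem \ref{bound} (whose hypothesis $G_S = G_S^{AH}$ is precisely what we assume) yields
$$ \delta_{AH}(Z) \le Hcd(S) - Hcd(Z) = 0.$$

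Next I would observe that $\delta_{AH}(Z) \ge 0$. This is immediate from the inclusion $G_Z \subset G_Z^{AH}$: the induced map of Hodge data $(G_Z, \mathcal{D}_Z) \to (G_Z^{AH}, \mathcal{D}_Z^{AH})$ realizes $\mathcal{D}_Z$ as an analytic subvariety of $\mathcal{D}_Z^{AH}$ of dimension at most $\dim \mathcal{D}_Z^{AH}$, so the same inequality passes to the arithmetic quotients. Combining with the upper bound gives $\delta_{AH}(Z) = 0$, as required.

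There is no serious obstacle here; the corollary is essentially a bookkeeping exercise extracting the extremal case of Theorem \ref{bound}. The only point worth being careful about is the sign convention in ``typical'' versus ``atypical'' and the use of the auxiliary inequality $Hcd(S) \ge Hcd(Z)$ for special subvarieties, which together pin down equality of Hodge codimensions in the typical case.
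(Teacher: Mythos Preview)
Your proof is correct and is exactly the intended deduction: the paper states the corollary without proof immediately after Theorem \ref{bound}, and your argument—combining the bound $\delta_{AH}(Z)\le Hcd(S)-Hcd(Z)$ with the equality $Hcd(S)=Hcd(Z)$ forced by typicality and specialness, together with the trivial nonnegativity of $\delta_{AH}(Z)$—is precisely what is meant.
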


If $S$ is a Shimura variety, then every special subvariety is a typical intersection. In general, the question whether special subvarieties are typical or atypical is closely related to the \emph{level} of the variation of Hodge structure as defined in \cite{BKU}.
For every Hodge generic point $x \in S(\mathbb{C})$, the representation $ \mathbb{S} \to G_{S, \mathbb{R}}$ of the Deligne torus defining the Hodge structure at the point $x$ induces a $\mathbb{Q}$-Hodge structure of weight zero on the Lie algebra $\mathfrak{g}_S$, and the adjoint Lie algebra $\mathfrak{g}_S^{ad}$ can be viewed as a sub-Hodge structure by identifying it with the derived Lie algebra $\mathfrak{g}_S^{der}:= [\mathfrak{g}_S, \mathfrak{g}_S]$. There is a compatibility between the Hodge structure and the Lie algebra structure, and $\mathfrak{g}_S^{ad}$ is called a $\mathbb{Q}$-Hodge Lie algebra in (\cite{BKU}, Definition 3.8).

\begin{mydef}[\cite{BKU}, Definition 3.12 and Definition 3.13] \textnormal{ }
\begin{enumerate}[(i)]
\item
The \emph{level} of an irreducible real Hodge structure $V$ of weight zero is the largest integer $k$ such that $V^{k,-k} \not= 0$ in the decomposition $V_{\mathbb{C}} = \oplus_{k \in \mathbb{Z}} V^{k,-k}$.
We define the level of an irreducible $\mathbb{Q}$-Hodge structure $V$ of weight zero as the maximum of the levels of the irreducible factors of $V_{\mathbb{R}}$, and the level of a $\mathbb{Q}$-Hodge structure of weight zero as the minimum of the levels of its irreducible $\mathbb{Q}$-factors.
\item
The \emph{level} of the variation of Hodge structure $\mathbb{V}$ is defined to be the level of the $\mathbb{Q}$-Hodge structure $\mathfrak{g}_S^{ad}$.
\end{enumerate}
\end{mydef}
One can show that the level is independent of the choice of the Hodge generic point $x$ (\cite{BKU}, 3.6).

For Shimura varieties, the variation $\mathbb{V}$ has level one.
For variations of Hodge structure of level $\ge 3$ however, it was proved in (\cite{BKU}, Theorem 2.3) that all strict special subvarieties are atypical.

\subsection{Maximal atypical special subvarieties}

Under suitable assumptions, maximal atypical special subvarieties of positive period dimension are dR-absolutely special.

\begin{thm}\label{maxatypabssp}
Suppose $G_S^{ad}$ is simple and the level of $\mathbb{V}$ is greater or equal to two. Then any maximal atypical special subvariety of positive period dimension is dR-absolutely special.
\end{thm}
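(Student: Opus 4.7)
I would prove the theorem by reducing to the weakly non-factor configuration and invoking the Baldi-Klingler-Ullmo machinery. Concretely, suppose for contradiction that $Z$ is not dR-absolutely special, and let $Y \supsetneq Z$ be a maximal closed irreducible subvariety of $S$ satisfying $G_Y^{AH} = G_Z^{AH}$, so that $Y$ itself is dR-absolutely special, hence special by Proposition \ref{absspecial1}(\ref{both1}). By Proposition \ref{HZisnormalinGAH}, both $H_Z$ and $H_Y$ are normal subgroups of $G_Z^{AH} = G_Y^{AH}$; since $H_Z \subseteq H_Y$, this gives $H_Z \lhd H_Y$. The case $H_Z = H_Y$ is excluded: it would imply $H_Y \subseteq G_Z$, and Lemma \ref{lem1} would then force $G_Y = G_Z$, contradicting that $Z$ is special with $Z \subsetneq Y$. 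Hence $H_Z$ is a strict normal subgroup of $H_Y$, so $Y$ also has positive period dimension.

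At this point we have two nested special subvarieties $Z \subsetneq Y$ of positive period dimension with a strict normal inclusion $H_Z \lhd H_Y$, and $Z$ atypical by hypothesis. Under the assumptions that $G_S^{ad}$ is simple and the level of $\mathbb{V}$ is at least two, I would then invoke (\cite{BKU}, Proposition 2.4), which in precisely this configuration forces $Y$ to be atypical. Together with the fact that $Y$ is special, of positive period dimension, and strictly contains $Z$, this contradicts the maximality of $Z$ among atypical special subvarieties of positive period dimension. Thus $Z$ must be dR-absolutely special.

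\textbf{Main obstacle.} The essential ingredient is (\cite{BKU}, Proposition 2.4), which converts the group-theoretic datum $H_Z \lhd H_Y$ into an atypical Hodge codimension estimate, using the level $\geq 2$ hypothesis and the simplicity of $G_S^{ad}$ to guarantee that any enlargement of $Z$ acquiring new monodromy costs at least as much codimension in the period image as it gains in dimension. My contribution in the argument is merely to set up, via Proposition \ref{HZisnormalinGAH} and Lemma \ref{lem1}, the correct tower of normal inclusions with a strict jump; the Hodge-theoretic propagation of atypicality is entirely black-boxed through the BKU proposition.
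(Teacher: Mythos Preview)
Your overall strategy is the contrapositive of the paper's own argument: the paper observes that $Y$ must be typical (by the maximality of $Z$) and then deduces $H_Z = H_Y$, whereas you first establish $H_Z \subsetneq H_Y$ and aim to deduce that $Y$ is atypical. Both routes rest on the same ingredients---Proposition \ref{HZisnormalinGAH}, the positive period dimension of $Z$, and (\cite{BKU}, Proposition 2.4)---so the approach is essentially the paper's.

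The gap lies in your description of (\cite{BKU}, Proposition 2.4). That result does not take a strict normal inclusion $H_Z \lhd H_Y$ as input and return an atypicality estimate for $Y$; as the paper uses it, it asserts that (for $G_S^{ad}$ simple, in level two) a \emph{typical} special subvariety $Y$ has simple adjoint Mumford-Tate group $G_Y^{ad}$. To make your contrapositive work you must supply the missing bridge: from $H_Z$ being a strict nontrivial connected normal subgroup of $H_Y$, deduce that $H_Y$ is not almost-simple, hence (via $H_Y \unlhd G_Y^{der}$ and Andr\'e's theorem) that $G_Y^{ad}$ is not simple; this in particular rules out $Y = S$, and only then does the contrapositive of (\cite{BKU}, Proposition 2.4) yield that $Y$ is atypical. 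The paper runs this same chain in the direct direction---$Y$ typical $\Rightarrow G_Y^{ad}$ simple $\Rightarrow H_Y$ simple $\Rightarrow H_Z = H_Y$, contradicting that $Z$ is weakly special---and also treats level $\geq 3$ separately via (\cite{BKU}, Theorem 2.3) and Corollary \ref{exampleKOU}, a case split that your uniform formulation elides.
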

\begin{proof}
If the level of $\mathbb{V}$ is greater or equal to three, it follows from (\cite{BKU}, Theorem 2.3) that all strict special subvarieties are atypical. Hence in this case the statement follows from Corollary \ref{exampleKOU}.
It remains to handle the case of level two. Suppose $Z \subset S$ is a maximal atypical special subvariety of positive period dimension.
If $Y \supsetneq Z$ is a dR-absolutely special subvariety with $G_Y^{AH} = G_Z^{AH}$, then $Y$ is a typical special subvariety. By (\cite{BKU}, Proposition 2.4), the adjoint generic Mumford-Tate group $G_Y^{ad}$ is simple, and thus also the normal subgroup $H_Y$.
Since $H_Z \unlhd G_Z^{AH} = G_{Y}^{AH}$, the group $H_Z$ is a normal subgroup of $H_Y$. Now the fact that $Z$ is of positive period dimension implies $H_Z = H_Y$, which is a contradiction because $Z$ is weakly special.
\end{proof}

\begin{rem}
In the case that $\mathbb{V}$ is of level one, the target of the period map is a Shimura variety, so using Deligne's result (\cite{Deligne}, Theorem 2.11) that Hodge classes on abelian varieties are absolute Hodge one can prove in many cases that special subvarieties are dR-absolutely special.
\end{rem}

\subsection{$\bar{\mathbb{Q}}$-bialgebraic subvarieties}

Let $S$ be an irreducible smooth quasi-projective complex algebraic variety.

\begin{mydef}[\cite{KUY}, Definition 4.1]
A \emph{bialgebraic structure} on $S$ is a pair
$$ \left(D: \tilde{S} \to \check{X}^{an}, h: \pi_1(S^{an}) \to \mathrm{Aut}(\check{X}) \right)$$
where
\begin{itemize}
\item $\pi: \tilde{S} \to S^{an} $ denotes the universal cover of $S^{an}$,
\item $\check{X}$ is a complex algebraic variety,
\item $h: \pi_1(S^{an}) \to \mathrm{Aut}(\check{X})$ is a group homomorphism to the algebraic automorphisms of $\check{X}$,
\item $D$ is a holomorphic map which is $h$-equivariant.
\end{itemize}\end{mydef}

For bialgebraic structures we define a notion of bialgebraic subvarieties.

\begin{mydef}[\cite{KUY}, Definition 4.2 and 4.3]
\textnormal{ }
\begin{enumerate}[(i)]
\item
A closed irreducible analytic subvariety $W \subset \tilde{S}$ is called an \emph{irreducible algebraic subvariety} of $\tilde{S}$ if $W$ is an analytic irreducible component of $D^{-1}(\bar{W})$, where $\bar{W}$ is a closed algebraic subvariety of $\check{X}$.
\item
A closed irreducible algebraic subvariety $Z\subset S$ is called \emph{bialgebraic} if one (equiv. any) irreducible analytic component of $\pi^{-1}(Z)$ is an algebraic subvariety of $\tilde{S}$ in the above sense.
\end{enumerate}\end{mydef}

Consider a variation of Hodge structure $\mathbb{V}$ on $S$.
If $\tilde{\Phi}: \tilde{S} \to \mathcal{D}_S$ denotes the lifted period map and $\iota: \mathcal{D}_S \subset \check{\mathcal{D}}_S$ the embedding of $\mathcal{D}_S$ into its compact dual, let $D= \iota \circ \tilde{\Phi}: \tilde{S} \to \check{\mathcal{D}}_S^{an}$ denote the composition.
Then $$\left(D: \tilde{S} \to \check{\mathcal{D}}_S^{an}, h: \pi_1(S^{an}) \to \mathrm{Aut}(\check{\mathcal{D}}_S) \right)$$ is a bialgebraic structure. Here $h$ is given by the natural map $\pi_1(S^{an}) \to G_S(\mathbb{C})$.

\begin{prop}[\cite{Klingleratypical}, Proposition 7.4]\label{bialgws}
The bialgebraic subvarieties of $S$ for this bialgebraic structure are exactly the weakly special subvarieties.
\end{prop}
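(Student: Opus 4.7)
The plan is to prove the two inclusions separately, using Theorem \ref{wsperiodmap1} (the period-theoretic characterization of weakly special subvarieties) as the main bridge between the group-theoretic side and the bialgebraic side.

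For the easier direction, namely that weakly special implies bialgebraic, I start with a weakly special $Z \subset S$. By Theorem \ref{wsperiodmap1}, $Z$ is an irreducible component of $\Phi^{-1}(W)$ for some weakly special $W \subset \Gamma\backslash \mathcal{D}_S$. Let $\tilde{W}$ be the preimage of $W$ in $\mathcal{D}_S$; by the definition of weakly special, $\tilde{W}$ is a union of translates of orbits of $H(\mathbb{R})$ on $\mathcal{D}_S$ for some subgroup $H$ arising in a diagram of Hodge data. Since $\mathcal{D}_S$ sits as an open subset of its compact dual $\check{\mathcal{D}}_S$ and the relevant orbits of complex algebraic subgroups of $G_{S,\mathbb{C}}$ are (closed) algebraic subvarieties of $\check{\mathcal{D}}_S$, each such real orbit is the intersection of $\mathcal{D}_S$ with an algebraic $\bar{W} \subset \check{\mathcal{D}}_S$. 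Then an irreducible analytic component of $\pi^{-1}(Z)$ is a component of $D^{-1}(\bar{W})$, so $Z$ is bialgebraic.

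For the converse, let $Z$ be bialgebraic, and pick an analytic irreducible component $\tilde{Z}$ of $\pi^{-1}(Z)$, which by assumption is an analytic component of $D^{-1}(\bar{W})$ for some algebraic $\bar{W} \subset \check{\mathcal{D}}_S$. Shrinking $\bar{W}$, I may assume it is the Zariski closure of $D(\tilde{Z})$ in $\check{\mathcal{D}}_S$. The crucial observation is that $\bar{W}$ is stable under the Zariski closure in $G_S(\mathbb{C})$ of the monodromy representation of $Z$, hence under $H_Z(\mathbb{C})$, since translating $\tilde{Z}$ by elements of $\pi_1(Z^{an},s)$ produces other components of $\pi^{-1}(Z)$ which still land in $\bar{W}$, and this stabilization passes to the (connected) Zariski closure of the image. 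Combined with the fact (Proposition \ref{normal1}) that $H_Z \unlhd G_Z^{der}$, this forces $D(\tilde{Z})$ to lie in a single $H_Z(\mathbb{C})$-orbit of a point in the Mumford-Tate domain of $G_Z$, i.e.\ in the preimage under $\iota$ of a weakly special subvariety $W$ of $\Gamma\backslash \mathcal{D}_S$ of the type described in Theorem \ref{wsperiodmap1}. Hence $Z$ is an irreducible component of $\Phi^{-1}(W)$, and so $Z$ is weakly special.

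The main obstacle is the converse direction: showing that the algebraicity of $\bar{W}$ together with $H_Z$-invariance forces $\bar{W}$ to be exactly the closure of an $H_Z$-orbit of a fiber over a point in a quotient Hodge variety, rather than something larger. The cleanest way to organize this is to invoke the description of orbits on $\check{\mathcal{D}}_S$ by normal subgroups of Mumford-Tate groups (as in the proof of Lemma \ref{monsm}), so that the algebraic subvariety $\bar{W}$ cut out by monodromy invariance and containing $D(\tilde{Z})$ is forced to be of the weakly special form. I would therefore devote most of the argument to showing that the minimal algebraic $\bar{W}$ containing $D(\tilde{Z})$ is precisely a translate of an $H_Z(\mathbb{C})$-orbit in $\check{\mathcal{D}}_{G_Z}$, and that its intersection with $\mathcal{D}_S$ descends to a weakly special subvariety in the Hodge variety.
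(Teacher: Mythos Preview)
The paper does not give its own proof of this proposition; it is quoted from \cite{Klingleratypical}, Proposition 7.4. So there is no proof in the paper to compare against.

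Evaluating your sketch on its own merits: the forward direction (weakly special $\Rightarrow$ bialgebraic) is fine, and your overall strategy for the converse is the standard one. However, the logic in your converse argument is inverted at the crucial step. You write that $H_Z(\mathbb{C})$-stability of $\bar{W}$, together with $H_Z \unlhd G_Z^{der}$, ``forces $D(\tilde{Z})$ to lie in a single $H_Z(\mathbb{C})$-orbit''. That containment is \emph{not} a consequence of the stability of $\bar{W}$; it comes from the theorem of the fixed part (equivalently, the factorization of the period map in the proof of Lemma \ref{monsm}), and what one obtains is that $D(\tilde{Z})$ lies in a single $H_Z(\mathbb{R})^+$-orbit inside $\mathcal{D}_{G_Z}$. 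What $H_Z$-stability of $\bar{W}$ actually buys you is the \emph{opposite} inclusion: since $\bar{W}$ is Zariski closed and $H_Z(\mathbb{C})$-stable, it contains the full $H_Z(\mathbb{C})$-orbit (hence the $H_Z(\mathbb{R})^+$-orbit) of any point of $D(\tilde{Z})$.

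Once these two facts are combined in the correct order, the argument closes: if $Z'$ is the weakly special closure of $Z$ and $\tilde{Z}'$ a lift, then $D(\tilde{Z}')$ lies in that same $H_Z(\mathbb{R})^+$-orbit, hence in $\bar{W}$, so $\tilde{Z}' \subset D^{-1}(\bar{W})$. Since $\tilde{Z}$ is by hypothesis an irreducible component of $D^{-1}(\bar{W})$ and $\tilde{Z} \subset \tilde{Z}'$ with $\tilde{Z}'$ irreducible, one gets $\tilde{Z} = \tilde{Z}'$ and thus $Z = Z'$. Your final paragraph suggests you sensed the gap (``showing that the minimal algebraic $\bar{W}$ \dots\ is precisely a translate of an $H_Z(\mathbb{C})$-orbit''), but the missing input is the fixed-part theorem rather than any further orbit analysis on $\check{\mathcal{D}}_S$.
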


In order to detect special subvarieties by the bialgebraic formalism, we need a more refined definition of bialgebraicity that takes arithmetic properties into account.

\begin{mydef}[\cite{KUY}, 4.11]
A bialgebraic structure $$\left(D: \tilde{S} \to \check{X}^{an}, h: \pi_1(S^{an}) \to \mathrm{Aut}(\check{X}) \right)$$ is called \emph{$\bar{\mathbb{Q}}$-bialgebraic} if the following holds:
\begin{enumerate}[(i)]
\item The base variety $S$ is defined over $\bar{\mathbb{Q}}$.
\item The variety $\check{X}$ is defined over $\bar{\mathbb{Q}}$ and the homomorphism $h$ takes values in the automorphisms of $\check{X}$ over $\bar{\mathbb{Q}}$.
\end{enumerate}

\end{mydef}

\begin{mydef}[\cite{KUY}] \label{defbialg}
A closed irreducible subvariety $Z \subset S$ is called \emph{$\bar{\mathbb{Q}}$-bialgebraic} if it is defined over $\bar{\mathbb{Q}}$ and an (resp. any) analytic irreducible component of $\pi^{-1}(Z)$ is an analytic irreducible component of $D^{-1}(\bar{W})$ for a $\bar{\mathbb{Q}}$-subvariety $\bar{W}$ of $\check{X}$.
\end{mydef}

Let $S$ be defined over $\bar{\mathbb{Q}}$ and $(\mathbb{V}^{\sigma})_{\sigma}$ a $\bar{\mathbb{Q}}$-absolute variation on $S$.
Then, as the compact dual $\check{\mathcal{D}}_S= G_{S,\mathbb{C}} / P$ can be defined over $\bar{\mathbb{Q}}$, the period map $\tilde{\Phi}: \tilde{S} \to \check{\mathcal{D}}_S$ defines a $\bar{\mathbb{Q}}$-bialgebraic structure.

\begin{prop}\label{biabssp}
Any dR-absolutely special subvariety is $\bar{\mathbb{Q}}$-bialgebraic.
\end{prop}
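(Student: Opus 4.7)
The plan is to verify directly the two conditions in Definition \ref{defbialg}. The first condition, that $Z$ is defined over $\bar{\mathbb{Q}}$, is immediate from Proposition \ref{absspecial1}(\ref{three1}) applied to the $\bar{\mathbb{Q}}$-absolute variation.

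For the bialgebraicity condition, since $Z$ is a $\bar{\mathbb{Q}}$-variety, I would pick a $\bar{\mathbb{Q}}$-point $s \in Z(\bar{\mathbb{Q}})$ together with a lift $\tilde{s} \in \tilde{S}$ such that $\tilde{\Phi}(\tilde{s}) = F_s^{\bullet}$, and let $\tilde{Z}$ denote the analytic irreducible component of $\pi^{-1}(Z)$ through $\tilde{s}$. Because $(\mathcal{V}, F^{\bullet})$ is defined over $\bar{\mathbb{Q}}$, the Hodge flag $F_s^{\bullet} \subset \mathcal{V}_s$ is a $\bar{\mathbb{Q}}$-point of $\check{\mathcal{D}}_S$. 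I would realize the Mumford--Tate domain in the form $\mathcal{D}_Z^{AH} = G^{AH}_{Z,\mathbb{R}} \cdot F_s^{\bullet} \subset \mathcal{D}_S$ based at $F_s^{\bullet}$, and set
$$ \bar{W} := G^{AH}_{Z,\mathbb{C}} \cdot F_s^{\bullet} \;\subset\; \check{\mathcal{D}}_S.$$
Since $G_Z^{AH}$ is a $\mathbb{Q}$-algebraic subgroup of $GL(\mathbb{V}_s)$ and $F_s^{\bullet}$ is a $\bar{\mathbb{Q}}$-point of $\check{\mathcal{D}}_S$, the stabilizer $G^{AH}_{Z,\mathbb{C}} \cap P$ (with $P$ the parabolic fixing $F_s^{\bullet}$ in $G_{S,\mathbb{C}}$) is $\bar{\mathbb{Q}}$-algebraic, and hence $\bar{W}$ is a $\bar{\mathbb{Q}}$-subvariety of $\check{\mathcal{D}}_S$. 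Observe also that $\mathcal{D}_Z^{AH}$ is an open and closed (hence connected) component of $\bar{W}^{an} \cap \mathcal{D}_S$.

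To finish, I would apply Proposition \ref{absspperiod}: since $Z$ is dR-absolutely special, $\tilde{Z}$ is an analytic irreducible component of $\tilde{\Phi}^{-1}(\mathcal{D}_Z^{AH})$. Because $\mathcal{D}_Z^{AH}$ is open and closed in $\bar{W}^{an} \cap \mathcal{D}_S$, its preimage $\tilde{\Phi}^{-1}(\mathcal{D}_Z^{AH})$ is a union of analytic irreducible components of $D^{-1}(\bar{W}) = \tilde{\Phi}^{-1}(\bar{W}^{an} \cap \mathcal{D}_S)$, and therefore $\tilde{Z}$ is itself an analytic irreducible component of $D^{-1}(\bar{W})$, as required.

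The main technical point in this plan is showing that $\bar{W}$ is a $\bar{\mathbb{Q}}$-subvariety of $\check{\mathcal{D}}_S$; this combines the $\bar{\mathbb{Q}}$-algebraicity of the filtered de Rham bundle, which produces $\bar{\mathbb{Q}}$-rational Hodge flags at $\bar{\mathbb{Q}}$-points of $S$, with the fact that the dR-absolute Mumford--Tate group $G_Z^{AH}$, being defined as the fixator of $\mathbb{Q}$-rational dR-absolute Hodge tensors, is a $\mathbb{Q}$-algebraic subgroup of $GL(\mathbb{V}_s)$.
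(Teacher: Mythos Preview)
Your overall strategy is reasonable, but the justification of the key technical step---that $\bar{W}$ is a $\bar{\mathbb{Q}}$-subvariety of $\check{\mathcal{D}}_S$---conflates two incompatible $\bar{\mathbb{Q}}$-structures. The $\bar{\mathbb{Q}}$-structure on $\check{\mathcal{D}}_S$ that makes the period map a $\bar{\mathbb{Q}}$-bialgebraic structure is the one coming from the \emph{Betti} side: $G_S$ is a $\mathbb{Q}$-subgroup of $GL(\mathbb{V}_s)$ and the conjugacy class of the parabolic $P$ is defined over $\bar{\mathbb{Q}}$. This choice is forced by condition (ii) in the definition of a $\bar{\mathbb{Q}}$-bialgebraic structure: the monodromy $\Gamma \subset G_S(\mathbb{Q})$ must act on $\check{\mathcal{D}}_S$ by $\bar{\mathbb{Q}}$-automorphisms, and this fails for the de Rham $\bar{\mathbb{Q}}$-structure (parallel transport along loops is not $\bar{\mathbb{Q}}$-rational on de Rham cohomology). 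Your assertion that ``$F_s^{\bullet}$ is a $\bar{\mathbb{Q}}$-point of $\check{\mathcal{D}}_S$'' appeals instead to the de Rham $\bar{\mathbb{Q}}$-structure on $\mathcal{V}_s$; with respect to the Betti structure it would say that the Hodge filtration is $\bar{\mathbb{Q}}$-rational inside $\mathbb{V}_s \otimes \mathbb{C}$, i.e.\ that the periods at $s$ are algebraic---which one cannot assume. So the orbit argument for $\bar{W}$ does not go through as written.

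The paper's proof avoids this and is also simpler. Since a dR-absolutely special $Z$ is in particular special (Proposition \ref{absspecial1}(\ref{both1})), the components of $\pi^{-1}(Z)$ are already irreducible components of the preimage of the compact dual $\check{\mathcal{D}}_Z = G_{Z,\mathbb{C}}/P_Z$ attached to the ordinary Mumford--Tate group $G_Z$; there is no need to invoke $G_Z^{AH}$ or Proposition \ref{absspperiod}. The $\bar{\mathbb{Q}}$-rationality of $\check{\mathcal{D}}_Z \subset \check{\mathcal{D}}_S$ is then argued purely on the Betti side: the inclusion $G_Z \subset G_S$ is defined over $\mathbb{Q}$, and the cocharacter determining the parabolic $P_Z$ is defined over $\bar{\mathbb{Q}}$ (Milne, \emph{Introduction to Shimura Varieties}, Lemma 12.1). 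No basepoint and no de Rham rationality enter. The same cocharacter argument, applied to $G_Z^{AH}$, would also repair your route, but the detour is then unnecessary.
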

\begin{proof}
If $Z \subset S$ is dR-absolutely special, then $Z$ is defined over $\bar{\mathbb{Q}}$. Moreover, since $Z$ is special, $\pi^{-1}(Z)$ will be a union of irreducible components of the preimage of the compact dual $\check{\mathcal{D}}_Z=G_{Z, \mathbb{C}}/P_{Z} \subset \check{\mathcal{D}}_S$ of $\mathcal{D}_Z$. As this inclusion is induced by the inclusion $G_Z \subset G_S$ of algebraic groups over $\mathbb{Q}$ and the cocharacter defining the parabolic subgroup $P_Z$ can be defined over $\bar{\mathbb{Q}}$ (cf. \cite{MilneIntro}, Lemma 12.1), the subvariety $\check{\mathcal{D}}_Z$ is defined over $\bar{\mathbb{Q}}$.
\end{proof}

As a generalization of (\cite{UY}, Theorem 1.4) one conjectures the following:

\begin{conj}\label{absspbi}
The $\bar{\mathbb{Q}}$-bialgebraic subvarieties of $S$ are exactly the dR-absolutely special subvarieties.
\end{conj}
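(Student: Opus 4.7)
The implication that every dR-absolutely special subvariety is $\bar{\mathbb{Q}}$-bialgebraic is Proposition \ref{biabssp}, so the task is to prove the converse: every $\bar{\mathbb{Q}}$-bialgebraic subvariety is dR-absolutely special. Let $Z \subset S$ be $\bar{\mathbb{Q}}$-bialgebraic. By Proposition \ref{bialgws}, $Z$ is weakly special, so it is an irreducible component of $\Phi^{-1}(\iota(\pi^{-1}(\{y\})))$ for a diagram of Hodge morphisms $X \overset{\iota}{\leftarrow} X_{G_Z} \overset{\pi}{\rightarrow} X_{G_Z/H_Z}$ with $y$ the projected image given by the Theorem of the fixed part. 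By Definition \ref{defbialg}, $Z$ is defined over $\bar{\mathbb{Q}}$ and the preimage $\iota(\pi^{-1}(\{y\}))$ lifts to a $\bar{\mathbb{Q}}$-subvariety $\bar{W} \subset \check{\mathcal{D}}_S$; in particular $Z^{\sigma}=Z$ for every $\sigma \in \mathrm{Aut}(\mathbb{C}/\bar{\mathbb{Q}})$.

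The first step is to apply Theorem \ref{introreducepoints}(\ref{introreducebialg}) to reduce the conjecture to the case where $Z=\{s\}$ is a single $\bar{\mathbb{Q}}$-bialgebraic point in a $\bar{\mathbb{Q}}$-motivic variation. In the point case, the $\bar{\mathbb{Q}}$-bialgebraic condition says $D(\tilde{s}) \in \check{\mathcal{D}}_S(\bar{\mathbb{Q}})$, which via the $\bar{\mathbb{Q}}$-structure on $(\mathcal{V}, \nabla, F^{\bullet})$ means the Hodge filtration at $s$ is defined over $\bar{\mathbb{Q}}$. Consequently, for every $\sigma \in \mathrm{Aut}(\mathbb{C}/\bar{\mathbb{Q}})$ the conjugated filtration $F^{\bullet,\sigma}_{s}$ coincides with $F^{\bullet}_{s}$ under the comparison $\iota_{\sigma}$.

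The plan is then to leverage this arithmetic rigidity to show $G_s = G_s^{AH}$. Given $\alpha \in \mathcal{AH}_s$, the de Rham avatar $\alpha_{dR}^{\sigma}$ lies in a filtered step of $(\mathcal{V}^{\sigma}_s)^{\otimes}$ determined by $F^{\bullet,\sigma}_s = F^{\bullet}_s$, and the aim is to deduce that $\alpha$ already defines a rational Hodge tensor in $\mathbb{V}_s^{\otimes}$. The main obstacle is precisely this last implication: the $\bar{\mathbb{Q}}$-rationality of the single flag $D(\tilde{s})$ does not obviously force the abstract Hodge decomposition to be compatible with $\mathrm{Gal}(\bar{\mathbb{Q}}/\mathbb{Q})$ in a way that identifies rational Hodge tensors with dR-absolute Hodge tensors—one really needs a Tannakian comparison between the Mumford-Tate group and the motivic Galois group at $s$, which is tantamount to Deligne's Conjecture \ref{conjdel1} for the motive attached to $s$. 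In full generality, therefore, the conjecture sits at the same depth as Deligne's. A realistic intermediate goal is to prove it under structural hypotheses that bypass this point: for example when $G_S^{ad}$ is simple and $Z$ is maximal of positive period dimension, where the weakly non-factor strategy of Theorem \ref{introbialgmax} succeeds, or for variations arising from abelian schemes where Deligne's theorem on absolute Hodge classes applies directly.
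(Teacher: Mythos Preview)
The statement you are attempting to prove is labeled a \emph{Conjecture} in the paper, and the paper does not claim a proof of it. The paper establishes only one direction (Proposition \ref{biabssp}: dR-absolutely special $\Rightarrow$ $\bar{\mathbb{Q}}$-bialgebraic) and the partial converse Theorem \ref{bialgmax} under the hypotheses that $G_S^{ad}$ is simple and $Z$ is a strict maximal $\bar{\mathbb{Q}}$-bialgebraic subvariety of positive period dimension. Your proposal is therefore not to be compared against a proof in the paper---there is none---but it is a reasonable outline of why the conjecture is expected to be hard, and your conclusion that the general case lies at the depth of Deligne's Conjecture \ref{conjdel1} is consistent with the paper's viewpoint.

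Two technical points in your outline deserve correction. First, your reformulation of the $\bar{\mathbb{Q}}$-bialgebraic condition for a point as $D(\tilde{s}) \in \check{\mathcal{D}}_S(\bar{\mathbb{Q}})$ is too strong: Definition \ref{defbialg} only requires $\tilde{s}$ to be an irreducible component of $D^{-1}(\bar{W})$ for some $\bar{\mathbb{Q}}$-subvariety $\bar{W} \subset \check{\mathcal{D}}_S$, and $\bar{W}$ need not be zero-dimensional. The $\bar{\mathbb{Q}}$-structure on $\check{\mathcal{D}}_S$ comes from the Betti $\mathbb{Q}$-form of $G_S$, so asserting that the Hodge flag is a $\bar{\mathbb{Q}}$-point is already a transcendence statement about periods, not a consequence of the definition. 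Second, in the point case you aim to prove $G_s = G_s^{AH}$, which is Deligne's conjecture itself and is strictly stronger than what is needed: for $\{s\}$ to be dR-absolutely special one only needs that no strictly larger $W \supset \{s\}$ satisfies $G_W^{AH} = G_s^{AH}$ (compare Conjecture \ref{conjabssp} versus Conjecture \ref{conjdel1}). Your reduction via Theorem \ref{reducepoints}(\ref{reducebialg}) is valid, but the residual problem for points remains open even in this weaker form.
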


We prove this conjecture for maximal subvarieties of positive period dimension when the adjoint group of $G_S$ is simple.

\begin{thm}\label{bialgmax}
Suppose $G_S^{ad}$ is simple.
Let $Z \subset S$ be a strict maximal $\bar{\mathbb{Q}}$-bialgebraic subvariety of positive period dimension. Then $Z$ is a dR-absolutely special subvariety (and in particular a special subvariety).
\end{thm}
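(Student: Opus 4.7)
The plan is to reduce Theorem \ref{bialgmax} to Corollary \ref{exampleKOU} by establishing that $Z$ is a strict special subvariety of positive period dimension, maximal among such. By Proposition \ref{bialgws} we already know $Z$ is weakly special. The key auxiliary observation, implicit in the proof of Proposition \ref{biabssp}, is that \emph{every special subvariety defined over $\bar{\mathbb{Q}}$ is automatically $\bar{\mathbb{Q}}$-bialgebraic}: indeed, the corresponding compact-dual subvariety $\check{\mathcal{D}}_W = G_{W,\mathbb{C}}/P_W \subset \check{\mathcal{D}}_S$ is defined over $\bar{\mathbb{Q}}$ because the cocharacter defining $P_W$ is $\bar{\mathbb{Q}}$-rational (Milne's lemma), combined with Saito-Schnell (Theorem \ref{fod}) which provides the $\bar{\mathbb{Q}}$-definability of $W$ itself from a $\bar{\mathbb{Q}}$-point.

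First I would show $Z$ is special. Consider the special closure $Z^{sp}$ of $Z$, i.e.\ the maximal subvariety with generic Mumford-Tate group equal to $G_Z$; this is the smallest special subvariety containing $Z$. Since $Z$ is $\bar{\mathbb{Q}}$-bialgebraic it contains a $\bar{\mathbb{Q}}$-point, hence so does $Z^{sp}$, so the auxiliary observation yields that $Z^{sp}$ is $\bar{\mathbb{Q}}$-bialgebraic. Moreover $Z^{sp}$ has positive period dimension since $Z$ does. The maximality of $Z$ among strict $\bar{\mathbb{Q}}$-bialgebraic subvarieties of positive period dimension therefore forces $Z^{sp} = Z$ or $Z^{sp} = S$. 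Suppose the latter: then $G_Z = G_S$, and $H_Z \unlhd G_S^{\,\mathrm{der}}$ by Proposition \ref{normal1}. Since $G_S^{ad}$ is simple, the only connected normal subgroups of $G_S^{\,\mathrm{der}}$ are trivial or $G_S^{\,\mathrm{der}}$ itself; positive period dimension of $Z$ excludes the trivial case, so $H_Z = G_S^{\,\mathrm{der}}$. The same reasoning applied to $S$ (using $H_S \supset H_Z \neq 1$) gives $H_S = G_S^{\,\mathrm{der}}$, and hence $H_Z = H_S$. But $Z$ being weakly special then forces $Z = S$ by maximality with that monodromy, contradicting the strictness of $Z$.

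Thus $Z$ is special, and the maximality assertion follows by the same logic: any strict special $Y \supsetneq Z$ of positive period dimension contains a $\bar{\mathbb{Q}}$-point and is therefore $\bar{\mathbb{Q}}$-bialgebraic by the auxiliary observation, contradicting the maximality of $Z$. Consequently $Z$ is maximal among strict special subvarieties of positive period dimension, and Corollary \ref{exampleKOU} yields that $Z$ is dR-absolutely special. The main obstacle is really the step showing that $Z^{sp}$ is $\bar{\mathbb{Q}}$-bialgebraic, not merely defined over $\bar{\mathbb{Q}}$; once one recognizes that the proof of Proposition \ref{biabssp} extends verbatim to any special subvariety defined over $\bar{\mathbb{Q}}$ (not only the dR-absolutely special ones), the rest of the argument is essentially formal and uses only the simplicity of $G_S^{ad}$ together with weak speciality of $Z$.
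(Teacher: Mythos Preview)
Your argument is correct but follows a different route from the paper's. The paper works directly with the dR-absolute Mumford-Tate group: it picks a dR-absolutely special subvariety $Y \supset Z$ with $G_Y^{AH} = G_Z^{AH}$, invokes Proposition~\ref{biabssp} as stated (dR-absolutely special $\Rightarrow$ $\bar{\mathbb{Q}}$-bialgebraic) to conclude that $Y$ is $\bar{\mathbb{Q}}$-bialgebraic, and then uses maximality to force $Y \in \{Z,S\}$; the case $Y=S$ is excluded via $H_Z \unlhd G_S^{AH}$, hence $H_Z \unlhd H_S$, and simplicity plus weak speciality of $Z$ finish. Your approach instead reduces to Corollary~\ref{exampleKOU} by first proving that $Z$ is maximal among \emph{special} subvarieties: you strengthen the proof of Proposition~\ref{biabssp} to the observation ``special and defined over $\bar{\mathbb{Q}}$ $\Rightarrow$ $\bar{\mathbb{Q}}$-bialgebraic'', and combine it with Saito--Schnell (Theorem~\ref{fod}) to show that both the special closure $Z^{sp}$ and any strict special $Y \supsetneq Z$ of positive period dimension are $\bar{\mathbb{Q}}$-bialgebraic, which maximality forbids. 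The paper's route is shorter and avoids the appeal to Saito--Schnell, since the $\bar{\mathbb{Q}}$-definability of $Y$ comes for free from Proposition~\ref{absspecial1}; your route has the merit of yielding the stronger intermediate statement that $Z$ is in fact a maximal strict special subvariety of positive period dimension, and it makes explicit that the implication ``special over $\bar{\mathbb{Q}}$ $\Rightarrow$ $\bar{\mathbb{Q}}$-bialgebraic'' does not require dR-absolute speciality.
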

\begin{proof}
Let $Y \supset Z$ be a dR-absolutely special subvariety such that $G_Y^{AH}=G_Z^{AH}$. By Proposition \ref{biabssp}, the variety $Y$ is $\bar{\mathbb{Q}}$-bialgebraic. By the maximality, either $Y= Z$ or $Y=S$. We have to exclude the latter case.
Suppose $Y=S$, then $H_Z \unlhd G_Z^{AH}=G_S^{AH}$. It follows that $H_Z \unlhd H_S$. As $H_S$ is simple, this forces $H_Z =H_S$ since $Z$ is of positive period dimension. But $Z$ is weakly special by Proposition \ref{bialgws}, so this would mean that $Z=S$ which is a contradiction to the fact that $Z$ is a strict subvariety.
\end{proof}

\subsection{Reduction to the case of points}

Corollary \ref{maxspprpos} suggests that in some sense special points are the hardest case for Conjectures \ref{conjdel1} and \ref{conjabssp}.
In this section we prove that both can in fact be reduced to the case of special points.

Let $(\mathbb{V}^{\sigma})_{\sigma}$ be a $\bar{\mathbb{Q}}$-absolute variation of Hodge structure on $S$ of geometric origin.
For a closed irreducible subvariety $Z\subset S$ we denote by $Y$ the union of all dR-absolutely special subvarieties of $S$ containing $Z$ with generic dR-absolute Mumford-Tate group $G_Z^{AH}$. These are only finitely many by Proposition \ref{absspperiod}.
We claim that $Z^{\sigma}$ is special in $S^{\sigma}$ for $\mathbb{V}^{\sigma}$ if and only if $Z^{\sigma}$ is special in $Y^{\sigma}$ for $\left. \mathbb{V}^{\sigma} \right|_{Y^{\sigma}} $.
Indeed, if $W \supset Z^{\sigma}$ is such that $G_W=G_{Z^{\sigma}}$, then Lemma \ref{lem2} shows that $G_W^{AH}=G_{Z^{\sigma}}^{AH}$ and hence $W$ is contained in $Y^{\sigma}$.
Replacing $S$ by $Y$, we may therefore assume that $G_S^{AH}=G_Z^{AH}$.
It follows that $H_Z$ is a normal subgroup of $H_S, G_S$ and $G_S^{AH}$.
As $H_Z$ is the stabilizer of finitely many tensors in $\mathbb{V}_s^{\otimes}$ for some $s \in Z(\mathbb{C})$, there exists a finite collection of integers $(a_i, b_i)_{i \le n}$ such the variation of Hodge structure $\mathbb{V}':= \bigoplus_{i \le n}(\mathbb{V}^{\otimes(a_i, b_i)})^{H_Z}$
has algebraic monodromy $H_S/H_Z$, generic Mumford-Tate group $G_S/H_Z$ and generic dR-absolute Mumford-Tate group $G_S^{AH}/H_Z$.
Indeed, for this to be the case we need to ensure that $G_S^{AH}/H_Z$ acts faithfully on the fiber $\mathbb{V}'_s$, which is guaranteed if $\mathbb{V}'_s$ contains the finitely many tensors defining $H_Z$.

The collection $({\mathbb{V}'}^{ \sigma})_{\sigma}$ defined by $${\mathbb{V}'}^{ \sigma}:= \bigoplus_{i \le n}\left((\mathbb{V}^{\sigma})^{\otimes(a_i,b_i)}\right)^{H_{Z^{\sigma}}}$$ forms an absolute variation on $S$.

After replacing $S$ by a desingularization $\tilde{S} \to S$, we may assume that $S$ is smooth.
The period map attached to $\mathbb{V}'$ is
$$ \Phi': S^{an} \to \Gamma'\backslash\mathcal{D}' ,$$
where $\Gamma' \backslash \mathcal{D}' $ is a Hodge variety associated with the quotient $G_S / H_Z$.
Let $\bar{S}$ be a smooth compactification of $S$ over $\bar{\mathbb{Q}}$ by a normal crossings divisor.
Denoting by $\tilde{S} \subset \bar{S}$ the subset where the local monodromy is finite, by (\cite{periodbook}, Corollary 13.7.6)
the period map $\Phi'$ extends to a proper map $$\tilde{\Phi}': \tilde{S}^{an} \to \Gamma' \backslash \mathcal{D}'.$$
Hence again replacing $S$ by $\tilde{S}$ we may assume that the period map $\Phi'$ is proper.
By \cite{BBT} there is a factorization $ \Phi' = \Psi \circ f$, where
$f: S \to B$ is a proper surjective map of algebraic varieties with connected fibers
and $\Psi: B^{an} \to \Gamma' \backslash \mathcal{D}'$ is a quasi-finite period map.

\begin{prop}\label{kernel}
For every $\sigma \in \mathrm{Aut}(\mathbb{C} / \mathbb{Q})$, the period map $$\Phi'_{\sigma}: S^{\sigma, an} \to \Gamma'_{\sigma} \backslash \mathcal{D}'_{\sigma}$$ corresponding to the variation of Hodge structure ${\mathbb{V}'}^{\sigma}$
also factors as $$\Phi'_{\sigma} = \Psi_{\sigma}\circ f^{\sigma},$$ where $f^{\sigma}: S^{\sigma} \to B^{\sigma}$ is the $\sigma$-conjugate of $f: S \to B$ and $\Psi_{\sigma}: B^{\sigma,an} \to \Gamma'_{\sigma} \backslash \mathcal{D}'_{\sigma} $ is a quasi-finite period map.
In particular, there exists a variation of Hodge structure $ {\mathbb{V}'}^{ \sigma}_B$ on $B^{\sigma}$ such that ${\mathbb{V}'}^{ \sigma} = f^{\sigma, *} {\mathbb{V}'}^{ \sigma}_B$.
\end{prop}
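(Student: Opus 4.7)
The plan is to exploit that the factorization $\Phi' = \Psi \circ f$ forces the variation $\mathbb{V}'$ to descend through $f$, transport this descent along the base change $\sigma^{-1}: S^\sigma \to S$ using the comparison isomorphism of the absolute structure, and finally derive the quasi-finiteness of $\Psi_\sigma$ by applying \cite{BBT} to $\Psi_\sigma$ itself and invoking the uniqueness of the resulting factorization.

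Since $\Psi: B^{an} \to \Gamma' \backslash \mathcal{D}'$ is itself a period map, it corresponds to a polarized variation of Hodge structure $\mathbb{V}'_B$ on $B$ with $\mathbb{V}' = f^* \mathbb{V}'_B$; in particular $(\mathcal{V}', \nabla', F'^\bullet) = f^*(\mathcal{V}'_B, \nabla'_B, F'^\bullet_B)$ as filtered algebraic vector bundles with connection. Applying the base change $\sigma^{-1}$ and the comparison isomorphism $\iota_\sigma$ of the absolute variation yields
\[ ({\mathcal{V}'}^\sigma, {\nabla'}^\sigma, F'^{\bullet,\sigma}) \cong (f^\sigma)^* \left( \sigma^{-1, *}(\mathcal{V}'_B, \nabla'_B, F'^\bullet_B) \right) \]
on $S^\sigma$, so the algebraic data underlying ${\mathbb{V}'}^\sigma$ descends to $B^\sigma$. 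By Riemann--Hilbert, the complexified local system ${\mathbb{V}'}^\sigma \otimes \mathbb{C}$ is the sheaf of flat sections of $({\mathcal{V}'}^\sigma, {\nabla'}^\sigma)$, and hence equals the pullback along $f^\sigma$ of the corresponding $\mathbb{C}$-local system on $B^\sigma$; in particular it, and therefore its $\mathbb{Q}$-sublocal system ${\mathbb{V}'}^\sigma$, is trivial on each (connected) fiber of $f^\sigma$. Since $f^\sigma$ is proper with connected fibers, standard local system descent produces a $\mathbb{Q}$-local system ${\mathbb{V}'}^\sigma_B$ on $B^\sigma$ with ${\mathbb{V}'}^\sigma = (f^\sigma)^* {\mathbb{V}'}^\sigma_B$. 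Combined with the already descended Hodge filtration, this is the desired VHS on $B^\sigma$; its period map $\Psi_\sigma: B^{\sigma,an} \to \Gamma'_\sigma \backslash \mathcal{D}'_\sigma$ satisfies $\Phi'_\sigma = \Psi_\sigma \circ f^\sigma$ by construction.

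It remains to verify that $\Psi_\sigma$ is quasi-finite. The map $\Phi'_\sigma$ is proper (by the same argument that made $\Phi'$ proper), so $\Psi_\sigma$ is proper as well. Apply \cite{BBT} to the proper period map $\Psi_\sigma$ to obtain a factorization $\Psi_\sigma = \Psi'_\sigma \circ g_\sigma$ with $g_\sigma: B^\sigma \to B''_\sigma$ proper surjective with connected fibers and $\Psi'_\sigma$ quasi-finite. Base changing along $\sigma^{-1}$ turns $g_\sigma$ into an algebraic map $g: B \to B''$ that is again proper surjective with connected fibers. Running the previous descent argument with $\mathbb{V}'^\sigma_B$ on $B^\sigma$ in place of $\mathbb{V}'^\sigma$ on $S^\sigma$, we obtain a VHS $\mathbb{V}''_{B''}$ on $B''$ with $\mathbb{V}'_B = g^* \mathbb{V}''_{B''}$; its period map $\Psi''$ then satisfies $\Psi = \Psi'' \circ g$. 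But $\Psi$ is already quasi-finite by the initial BBT factorization of $\Phi'$, so by the uniqueness of the BBT factorization---fibers of the proper part must coincide with the connected components of the fibers of the period map---the map $g$ is forced to be an isomorphism. Hence $g_\sigma$ is also an isomorphism, and $\Psi_\sigma = \Psi'_\sigma \circ g_\sigma$ is quasi-finite. The main obstacle is precisely this last step: the BBT construction is not manifestly $\sigma$-equivariant, since it inextricably mixes the analytic period map with the algebraic geometry of the base, so one cannot simply transport it along $\sigma^{-1}$; the detour through the ``double descent'' through $g \circ f$ followed by invoking uniqueness of the BBT factorization on the id-side is what makes the argument close.
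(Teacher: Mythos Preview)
Your argument is correct, but the paper takes a noticeably different and shorter route. Rather than descending the filtered module with connection by hand, the paper characterizes the first map of the Stein/BBT factorization intrinsically: $S\times_B S$ is exactly the connected component of the diagonal inside the kernel of $\Phi'$ in $S\times S$, and this kernel is cut out by a Hodge cycle (the identity section of $\mathcal{H}om(\mathrm{pr}_1^{*}\mathbb{V}',\mathrm{pr}_2^{*}\mathbb{V}')$, which is Hodge precisely where $\Phi'$ agrees on the two factors). Since the identity is dR-absolute Hodge on the diagonal, the corresponding kernel on $S^\sigma\times S^\sigma$ is the $\sigma$-conjugate $S^\sigma\times_{B^\sigma}S^\sigma$, and hence the BBT factorization of $\Phi'_\sigma$ must have $f^\sigma$ as its first map; quasi-finiteness of $\Psi_\sigma$ then comes for free from BBT applied to $\Phi'_\sigma$.

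Your approach has the virtue of being entirely ``linear'': you only use that the de Rham data $(\mathcal{V}',\nabla',F'^\bullet)$ is $\sigma$-equivariant, descend it along $f^\sigma$ directly, and then run the ingenious double-descent to pull the quasi-finiteness back from the identity side via the uniqueness of the BBT factorization. This avoids invoking the absolute-Hodge formalism on the product $S\times S$ and would go through for any de Rham-compatible system of variations. The paper's proof is shorter and more conceptual (one absolute Hodge cycle does all the work), while yours is more hands-on and perhaps more transportable to other settings. One minor point: your claim that $\Phi'_\sigma$ is proper ``by the same argument'' deserves a word of justification (finiteness of local monodromy along a boundary component is detected by the residue of $\nabla'$, hence is $\sigma$-invariant), though the paper is equally brief about this.
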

\begin{proof}
The first map $f: S \to B$ in the Stein factorization of $\Phi'$ is characterized by the fact that the kernel $ S \times_B S$ is the connected component of the diagonal of the kernel the period map $\Phi': S^{an} \to \Gamma' \backslash \mathcal{D}'$.
Thus it suffices to prove that for every $\sigma$, the kernel $S^{\sigma} \times_{B^{\sigma}} S^{\sigma}$ equals the connected component of the diagonal of the kernel of the period map $\Phi'_{\sigma}:S^{\sigma, an} \to \Gamma'_{\sigma} \backslash \mathcal{D}'_{\sigma}$.
This follows from the fact that the Hodge cycle defining this kernel is dR-absolute Hodge (as it is the identity, and hence dR-absolute Hodge on the diagonal).
\end{proof}

\begin{prop}
The map $f: S \to B$ can be defined over $\bar{\mathbb{Q}}$, and the collection $({\mathbb{V}'}^{ \sigma}_B)_{\sigma}$ forms a $\bar{\mathbb{Q}}$-motivic variation of Hodge structure on $B$.
\end{prop}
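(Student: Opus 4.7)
The plan is to proceed in three steps: first show that $f \colon S \to B$ descends to a morphism of $\bar{\mathbb{Q}}$-varieties, second equip $(\mathbb{V}'^{\sigma}_B)_{\sigma}$ with the structure of a $\bar{\mathbb{Q}}$-absolute variation, and finally verify motivicity on a dense open subset of $B$.

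For the first step, I would use Proposition \ref{kernel}: the subvariety $S \times_B S \subset S \times S$ is characterized as the connected component of the diagonal in the ``kernel'' $\{(s_1,s_2) : \Phi'(s_1)=\Phi'(s_2)\}$ of the period map, and its $\sigma$-conjugate is the analogous kernel for $\Phi'^{\sigma}$. Since the algebraic data $(\mathcal{V}', \nabla', F^{\bullet\prime})$ underlying $\mathbb{V}'$ is defined over $\bar{\mathbb{Q}}$ (as it is obtained by tensor operations from the $\bar{\mathbb{Q}}$-absolute variation $\mathbb{V}$), the formation of $S \times_B S$ is Galois-equivariant, and a standard descent argument combined with the uniqueness of the Stein factorization from \cite{BBT} gives that $(B,f)$ is defined over $\bar{\mathbb{Q}}$. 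This is the main technical obstacle; making the descent fully rigorous may require invoking the explicit construction in \cite{BBT} and checking that it can be carried out in the $\bar{\mathbb{Q}}$-algebraic category.

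Once $f$ is over $\bar{\mathbb{Q}}$, the $\bar{\mathbb{Q}}$-absolute structure on $(\mathbb{V}'^{\sigma}_B)_{\sigma}$ follows easily: since $f^{\sigma}$ has connected fibers and $\mathbb{V}'^{\sigma} = f^{\sigma,*}\mathbb{V}'^{\sigma}_B$, the variation $\mathbb{V}'^{\sigma}_B$ is uniquely determined by descent along $f^{\sigma}$, and the de Rham comparison isomorphisms $\iota_{\sigma}$ for $\mathbb{V}'$ descend to analogous isomorphisms for $\mathbb{V}'_B$.

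For motivicity, let $U' \subset S$ denote the Hodge-generic open subset on which $G_s^{AH} = G_S^{AH}$. As $f$ is proper, $f(S \setminus U')$ is closed in $B$, so $U := B \setminus f(S \setminus U')$ is a dense open subset of $B$. For $b \in U(\mathbb{C})$ and any $s \in f^{-1}(b)$ one has $G_s^{AH} = G_S^{AH} = G_Z^{AH}$, so by Proposition \ref{HZisnormalinGAH} the group $H_Z$ is normal in $G_s^{AH}$. Consequently the projector $e$ onto the $H_Z$-invariants of $\bigoplus_i \mathbb{V}_s^{\otimes(a_i,b_i)}$ is $G_s^{AH}$-invariant, hence by the Tannakian description of $G_s^{AH}$ in Remark \ref{grouppoint1} is a dR-absolute Hodge cycle at $s$. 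Since $(\mathbb{V}_{s^{\sigma}}^{\sigma})_{\sigma}$ is the realization of a motive by the geometric origin hypothesis, $e$ cuts out a direct summand whose realization is precisely $(\mathbb{V}'^{\sigma}_{B,b^{\sigma}})_{\sigma}$ together with its comparison isomorphisms, exhibiting $(\mathbb{V}'^{\sigma}_B)_{\sigma}$ as a $\bar{\mathbb{Q}}$-motivic variation over $U$.
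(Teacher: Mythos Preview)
Your overall strategy matches the paper's, but two points need tightening.

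First, in step~1 you assert that $(\mathcal{V}',\nabla',F^{\bullet\prime})$ is defined over $\bar{\mathbb{Q}}$ ``as it is obtained by tensor operations from the $\bar{\mathbb{Q}}$-absolute variation $\mathbb{V}$''. This is not quite right: the construction $\mathbb{V}'=\bigoplus_i(\mathbb{V}^{\otimes(a_i,b_i)})^{H_Z}$ involves taking $H_Z$-invariants, which is not a tensor operation and does not automatically descend. The correct argument is exactly the observation you make in step~3: since $H_Z\unlhd G_S^{AH}$, the projector $e$ onto $H_Z$-invariants is a \emph{global} dR-absolute Hodge class on $S$, so its de Rham component is defined over $\bar{\mathbb{Q}}$, and hence its kernel $\mathcal{V}'$ is defined over $\bar{\mathbb{Q}}$. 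The paper uses this single observation to obtain both the $\bar{\mathbb{Q}}$-structure on $\mathcal{V}'$ and the motivicity in step~3. Once $(\mathcal{V}',\nabla',F^{\bullet\prime})$ is over $\bar{\mathbb{Q}}$, the paper simply invokes the remark following \cite[Theorem 1.1]{BBT} to conclude that $f$ is defined over $\bar{\mathbb{Q}}$; your hands-on descent via $S\times_B S$ is then unnecessary. The paper also identifies $\mathcal{V}'^{\sigma}_B$ explicitly as $f^{\sigma}_*\mathcal{V}'^{\sigma}$ via the projection formula (using $f^{\sigma}_*\mathcal{O}_{S^{\sigma}}=\mathcal{O}_{B^{\sigma}}$), which makes the de Rham comparison and the $\bar{\mathbb{Q}}$-structure on $\mathcal{V}'_B$ transparent.

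Second, in step~3 your set $U'=\{s:G_s^{AH}=G_S^{AH}\}$ is in general only the complement of a \emph{countable} union of proper closed subvarieties, not a Zariski open set, so $B\setminus f(S\setminus U')$ need not be open. The paper instead uses the dense open $U\subset S$ supplied by the geometric-origin hypothesis (on which the fibers of $\mathbb{V}$ are motivic) and takes $b$ outside the closed set $f(S\setminus U)$. Since the projector $e$ is dR-absolute Hodge \emph{globally} (not just at points with $G_s^{AH}=G_S^{AH}$), at any $x\in U$ with $f(x)=b$ it gives a morphism of motives for dR-absolute Hodge cycles, and its image is the desired motive realizing $(\mathbb{V}'^{\sigma}_{B,b^{\sigma}})_{\sigma}$.
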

\begin{proof}
We claim that ${\mathcal{V}'}^{ \sigma}_B = f^{\sigma}_{*} {\mathcal{V}'}^{ \sigma}$, where ${\mathcal{V}'}^{ \sigma}_B$ and ${\mathcal{V}'}^{ \sigma} $ are the associated algebraic vector bundles.
Following the argument in (\cite{KOU}, Lemma 3.4), the projection formula gives
\begin{equation}\label{projectionformulaabssp} f^{\sigma}_{*} {\mathcal{V}'}^{ \sigma} = f^{\sigma}_{*}(f^{\sigma, *} {\mathcal{V}'}^{ \sigma}_B \otimes_{\mathcal{O}_{S^{\sigma}}} \mathcal{O}_{S^{\sigma}}) = {\mathcal{V}'}^{ \sigma}_B \otimes_{\mathcal{O}_{B^{\sigma}}} f_{*}^{\sigma} \mathcal{O}_{S^{\sigma}} = {\mathcal{V}'}^{ \sigma}_B, \end{equation}
and similarly for the filtration and the connection.
Here we use that by the Stein factorization $f_{*}^{\sigma} \mathcal{O}_{S^{\sigma}} = \mathcal{O}_{B^{\sigma}}$.
Now the comparison isomorphisms for the absolute variation of Hodge structure $({\mathbb{V}'}^{ \sigma}_B)_{\sigma}$ follow from those for ${\mathbb{V}'}^{ \sigma} $ by applying $f^{\sigma}_{*}$.
We prove that the absolute variation in question is a $\bar{\mathbb{Q}}$-motivic variation.
By construction, there is a dense open subvariety $U \subset S$ such that for each point $x \in U(\mathbb{C})$, the collection $(\mathbb{V}_{x^{\sigma}}^{ \sigma})_{\sigma}$ comes from a motive for dR-absolute Hodge cycles.
Since $f$ is proper, the image of $S \setminus U$ under $f$ is closed subvariety of $B$, and we claim that for $b =f(x)\in B(\mathbb{C})$ chosen away from this image, the collection $({\mathbb{V}'}^{ \sigma}_{B,b^{\sigma}})_{\sigma}$ comes from a motive for dR-absolute Hodge cycles.
As ${\mathbb{V}'} = f^{*} \mathbb{V}'_B$, we have $$ \mathbb{V}'_{B,b} = \mathbb{V}'_x = \bigoplus_{i \le n}\left(\mathbb{V}_x^{\otimes(a_i, b_i)}\right)^{H_Z}.$$
From the normality of $H_Z$ in $G_S^{AH}$ we see that the variation of Hodge structure $\mathbb{V}'$ on $S$ is the kernel of an idempotent operator on $\bigoplus_{i \le n}\mathbb{V}^{\otimes(a_i, b_i)}$ which is dR-absolute Hodge.
In particular, if $(\mathbb{V}_{x^{\sigma}}^{ \sigma})_{\sigma}$ comes from a motive for dR-absolute Hodge cycles, then $({\mathbb{V}'}^{ \sigma}_{B,b^{\sigma}})_{\sigma} = ({\mathbb{V}'}^{\sigma}_{x^{\sigma}})_{\sigma} $ is the realization of a motive lying in the Tannakian subcategory generated by this motive.
We claim that the filtered algebraic vector bundle with connection $(\mathcal{V}', \nabla', F^{',\bullet})$ on $S$ is defined over $\bar{\mathbb{Q}}$.
Indeed, the de Rham component of the idempotent dR-absolute Hodge operator on $\bigoplus_{i \le n}\mathbb{V}^{\otimes(a_i, b_i)}$ defining $\mathbb{V}'$ is defined over $\bar{\mathbb{Q}}$, and its kernel is $\mathcal{V}'$.
By the remark following (\cite{BBT}, Theorem 1.1), the morphism $f: S \to B$ is defined over $\bar{\mathbb{Q}}$. Using (\ref{projectionformulaabssp}) we conclude that $({\mathbb{V}'}^{ \sigma}_{B})_{\sigma}$ is a $\bar{\mathbb{Q}}$-motivic variation.
\end{proof}

For every closed subvariety $W$ of $S$, the image $W':=f(W)$ is a closed subvariety of $B$. Then $W$ is a special subvariety of $S$ for $\mathbb{V}'$ if and only if $W'$ is a special subvariety of $B$.
We denote by $G'_W$ the generic Mumford-Tate group of $W$ with respect to the variation of Hodge structure $\mathbb{V}'$ and by ${G'}^{AH}_W$ the generic dR-absolute Mumford-Tate group of $W$ with respect to the absolute variation $({\mathbb{V}'}^{ \sigma})_{\sigma}$.

\begin{prop}\label{AHquotient}
For every closed irreducible subvariety $W \subset S$, we have $$G_{W'}^{AH} = {G'}_{W}^{AH}.$$
\end{prop}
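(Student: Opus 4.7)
The plan is to prove both inclusions ${G'}^{AH}_W \subset G^{AH}_{W'}$ and $G^{AH}_{W'} \subset {G'}^{AH}_W$ using crucially the pullback structure ${\mathbb{V}'}^{\sigma} = f^{\sigma,*} {\mathbb{V}'}^{\sigma}_B$ established in Proposition \ref{kernel}. First, choose $s \in W$ and set $b := f(s) \in W'$. For every $\sigma \in \mathrm{Aut}(\mathbb{C}/\mathbb{Q})$ the pullback identifies fibers ${\mathbb{V}'}^{\sigma}_{s^\sigma} = {\mathbb{V}'}^{\sigma}_{B, b^\sigma}$ compatibly with their Hodge structures and with the de Rham comparison isomorphisms $\iota_\sigma$. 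Consequently, a tensor $\alpha$ in the common fiber at $(s,b)$ is dR-absolute Hodge for $({\mathbb{V}'}^{\sigma})_{\sigma}$ at $s$ if and only if it is dR-absolute Hodge for $({\mathbb{V}'}^{\sigma}_B)_{\sigma}$ at $b$.

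The inclusion ${G'}^{AH}_W \subset G^{AH}_{W'}$ is immediate. If $\alpha \in \mathcal{AH}_{W'}$, then $\alpha$ extends to a flat section on a finite étale cover of $W'$ which is dR-absolute Hodge at every point; pulling back via $f|_W$ produces a flat dR-absolute Hodge section on the associated finite étale cover of $W$, showing $\alpha \in \mathcal{AH}_W$. Passing to stabilizers in $GL({\mathbb{V}'}_{B,b})$ reverses the inclusion.

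The reverse inclusion hinges on the equality of algebraic monodromy groups: the identity component of the Zariski closure of the monodromy of $\mathbb{V}'$ on $W$ equals that of $\mathbb{V}'_B$ on $W'$. To see this, take the Stein factorization of the proper surjective map $f|_W: W \to W'$, namely $f|_W = h \circ g$ with $g: W \to \widetilde{W'}$ having geometrically connected fibers and $h: \widetilde{W'} \to W'$ finite. Then $g_*$ induces a surjection on topological fundamental groups and $h_*$ has finite-index image, so the image of $\pi_1(W^{an})$ in $\pi_1({W'}^{an})$ is of finite index. Since the monodromy of $\mathbb{V}'$ on $W$ factors through $\pi_1({W'}^{an}) \to GL({\mathbb{V}'}_{B,b})$, its image is a finite index subgroup of the monodromy image of $\mathbb{V}'_B$ on $W'$, and the identity components of the two Zariski closures coincide.

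Granting this, if $\alpha \in \mathcal{AH}_W$, then $\alpha$ is fixed by the algebraic monodromy of $\mathbb{V}'_B$ on $W'$, so $\alpha$ extends to a flat section on a finite étale cover of $W'$ for ${\mathbb{V}'}_B$. By the fiber identification $\alpha$ is dR-absolute Hodge at $b$, and Principle B (Theorem \ref{PrincipleBnonsmooth}) applied to $W'$ propagates this to every point of the cover. Hence $\alpha \in \mathcal{AH}_{W'}$, and combined with the first inclusion this yields the desired equality of groups. The main obstacle is the Stein factorization step, where the properness of $f$ and the geometric connectedness of its generic fibers enter essentially; the rest reduces to bookkeeping via Principle B and the fiber identification.
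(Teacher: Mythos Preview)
Your proof is correct but takes a more elaborate route than the paper's. The paper dispatches the statement in two lines: since each ${\mathbb{V}'}^\sigma$ is the pullback of ${\mathbb{V}'}^\sigma_B$ along $f^\sigma$ (Proposition~\ref{kernel}) and the fibers of $f$ are connected, the absolute Hodge structure at any point $s \in W$ coincides with that at $f(s) \in W'$; combined with the identification ${G'}^{AH}_W = {G'}^{AH}_s$ for a Hodge-generic $s$ (established in the proof of Proposition~\ref{HZisnormalinGAH} via Lemma~\ref{lem2}), one gets ${G'}^{AH}_W = {G'}^{AH}_s = G^{AH}_{f(s)} = G^{AH}_{W'}$ immediately. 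Your approach instead proves the equality of algebraic monodromy groups $H'_W = H_{W'}$ via Stein factorization of $f|_W$ and then transfers dR-absolute Hodge tensors using Principle~B (Theorem~\ref{PrincipleBnonsmooth}). This is valid and has the virtue of working directly with the tensor spaces $\mathcal{AH}_W$ and $\mathcal{AH}_{W'}$ rather than reducing to a single generic point. On the other hand, the Stein-factorization step imports the surjectivity of $g_*$ on fundamental groups for proper morphisms with connected fibers, a fact which in the possibly singular setting here requires some care (it is cleanest for the \'etale fundamental group, and one then passes to the Zariski closure of the monodromy image). The paper's route avoids this entirely by leaning on the generic-point reduction already available.

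A minor remark on your first inclusion: the phrase ``$\alpha$ extends to a flat section on a finite \'etale cover of $W'$'' is correct but not quite by definition of $\mathcal{AH}_{W'}$; it follows because $H_{W'} \subset G^{AH}_{W'}$ and $G^{AH}_{W'}$ fixes $\mathcal{AH}_{W'}$. One could also argue this inclusion more directly: every path in $W$ projects to a path in $W'$, and the dR-absolute Hodge condition at $t \in W$ agrees with that at $f(t)$, so $\mathcal{AH}_{W'} \subset \mathcal{AH}_W$ is immediate without passing through flat sections.
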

\begin{proof}
By definition, the variation of Hodge structure $\mathbb{V}'$ is constant on the fibers of $f: S \to B$.
Since these fibers are connected it follows that also the absolute variation of Hodge structure is constant, which proves the proposition.
\end{proof}

\begin{thm}\label{reducepoints}
\begin{enumerate}[(i)]
\item \label{reducedeligne}
Suppose Deligne's conjecture \ref{conjdel1} holds for special points in $\bar{\mathbb{Q}}$-motivic variations. Then it holds for all subvarieties in $\bar{\mathbb{Q}}$-absolute variations of geometric origin.
\item \label{reduceabssp}
Suppose that Conjecture \ref{conjabssp} holds for special points in $\bar{\mathbb{Q}}$-motivic variations. Then it holds for all special subvarieties in $\bar{\mathbb{Q}}$-absolute variations of geometric origin.
\item \label{fodgalconj}
Suppose that special points in $\bar{\mathbb{Q}}$-motivic variations are defined over $\bar{\mathbb{Q}}$ and all their Galois conjugates are special. Then the same holds for all special subvarieties in $\bar{\mathbb{Q}}$-absolute variations of geometric origin.
\item \label{reducebialg}
Suppose that all $\bar{\mathbb{Q}}$-bialgebraic points in $\bar{\mathbb{Q}}$-motivic variations are dR-absolutely special. Then all $\bar{\mathbb{Q}}$-bialgebraic subvarieties in $\bar{\mathbb{Q}}$-absolute variations of geometric origin are dR-absolutely special.
\end{enumerate}
\end{thm}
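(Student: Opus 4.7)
The plan is to use the reduction set up in the preceding propositions, which replaces a subvariety $Z\subset S$ by its image $b:=f(Z)\in B$ in the $\bar{\mathbb{Q}}$-motivic variation $\mathbb{V}'_B$, and to transfer each of the four statements via a dictionary between $Z$ in $Y$ and $\{b\}$ in $B$.

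First I will establish the dictionary. Because the $H_Z$-invariant subspaces $(\mathbb{V}^{\otimes (a_i,b_i)})^{H_Z}$ are trivially acted on by $H_Z$, the variation $\mathbb{V}'|_Z$ has trivial algebraic monodromy; Theorem \ref{fixedpartnonsm1} forces $\Phi'|_Z$ to be constant, and the quasi-finiteness of $\Psi$ makes $f(Z)=\{b\}$ a single point. For a Hodge-generic $s\in Z$, the identification $\mathbb{V}'_{B,b}=\mathbb{V}'_s$ yields $G_b^{\mathbb{V}'_B}=G_Z/H_Z$ and $G_b^{AH,\mathbb{V}'_B}=G_Z^{AH}/H_Z$ via the faithful representations of $G_Z/H_Z$ and $G_Z^{AH}/H_Z$ on $\mathbb{V}'_s$. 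If $Z$ is special in $Y$, then for the irreducible component $F$ of $f^{-1}(b)$ containing $Z$ one obtains $H_F\subseteq H_Z$ (from the trivial monodromy of $\mathbb{V}'|_F$, combined with the faithfulness of $G_Y^{AH}/H_Z$ on $\mathbb{V}'_s$) and $H_F\supseteq H_Z$ (from $Z\subseteq F$), so $H_F=H_Z\subseteq G_Z$; Lemma \ref{lem1} then gives $G_F=G_Z$, and maximality of $Z$ forces $F=Z$. Conversely, for any $W\supsetneq\{b\}$ in $B$ with $G_W^{\mathbb{V}'_B}=G_b^{\mathbb{V}'_B}$ (resp. $G_W^{AH,\mathbb{V}'_B}=G_b^{AH,\mathbb{V}'_B}$), a suitable irreducible component $\tilde W_0$ of $f^{-1}(W)$ strictly contains $Z$ and has $G_{\tilde W_0}=G_Z$ (resp. $G_{\tilde W_0}^{AH}=G_Z^{AH}$, using Proposition \ref{AHquotient}) by the pullback relation for Mumford-Tate groups.

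With this dictionary in hand, the four parts follow. For (\ref{reduceabssp}): $Z$ special makes $\{b\}$ special in $B$, the assumption makes $\{b\}$ dR-absolutely special in $B$, and the converse half of the dictionary yields $Z$ dR-absolutely special in $Y$, hence in $S$ since any enlargement $W\supsetneq Z$ in $S$ with $G_W^{AH}=G_Z^{AH}$ is automatically contained in $Y$ by Proposition \ref{absspperiod}. For (\ref{reducedeligne}): I would first enlarge an arbitrary $Z$ to a maximal special $Z'\supseteq Z$ with $G_{Z'}=G_Z$ (hence $G_{Z'}^{AH}=G_Z^{AH}$ by Lemma \ref{lem2}); applying the reduction to $Z'$ makes $\{b\}=f(Z')$ a special point in $\mathbb{V}'_B$, and the assumed Deligne's conjecture there gives $G_{Z'}=G_{Z'}^{AH}$, which transfers to $G_Z=G_Z^{AH}$. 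For (\ref{fodgalconj}): $Z$ is an irreducible component of the $\bar{\mathbb{Q}}$-subvariety $f^{-1}(b)$, hence defined over $\bar{\mathbb{Q}}$; for $\sigma$ in the relevant Galois group, $Z^{\sigma}$ is the corresponding component of $(f^{\sigma})^{-1}(b^{\sigma})$, $\{b^{\sigma}\}$ is special in $B^{\sigma}$ by assumption, and the dictionary applied in $S^{\sigma}$ gives $Z^{\sigma}$ special in $Y^{\sigma}$; the upgrade to special in $S^{\sigma}$ uses Lemma \ref{lem2} to observe that any competitor $W\supsetneq Z^{\sigma}$ with $G_W=G_{Z^{\sigma}}$ automatically satisfies $G_W^{AH}=G_{Z^{\sigma}}^{AH}=G_Z^{AH}$ and is hence contained in $Y^{\sigma}$ by Proposition \ref{absspperiod}, a contradiction. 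For (\ref{reducebialg}): the $\bar{\mathbb{Q}}$-defined quotient $\check{\mathcal{D}}_S\to\check{\mathcal{D}}'$ induced by the $\bar{\mathbb{Q}}$-morphism $G_S\to G_S/H_Z$ descends $\bar{\mathbb{Q}}$-bialgebraicity from $Z$ to $\{b\}$ (compare Proposition \ref{biabssp}); the assumption then makes $\{b\}$ dR-absolutely special and the dictionary finishes as in (\ref{reduceabssp}).

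The main obstacle will be the upgrade step in (\ref{fodgalconj}), where we cannot invoke Conjecture \ref{conjabssp} for the conjugates themselves and must instead combine Lemma \ref{lem2} with Proposition \ref{absspperiod} to confine any purported larger subvariety with Mumford-Tate group $G_{Z^{\sigma}}$ into the reduction space $Y^{\sigma}$. A secondary subtlety is the identification of the generic Mumford-Tate groups of $\mathbb{V}'$ on subvarieties of $Y$ with quotients by $H_Z$, which relies crucially on the choice of $(a_i,b_i)$ ensuring the faithfulness of $G_S^{AH}/H_Z$ on $\mathbb{V}'_s$.
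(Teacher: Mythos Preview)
Your overall strategy coincides with the paper's: reduce $Z\subset S$ to the point $b=f(Z)$ in the $\bar{\mathbb{Q}}$-motivic variation $\mathbb{V}'_B$ on $B$, using the identifications $G_b=G_Z/H_Z$, $G_b^{AH}=G_Z^{AH}/H_Z$, and the fact (when $Z$ is special) that $Z$ is an irreducible component of $f^{-1}(b)$. Part (\ref{reducedeligne}) is correct as written, since there you only use these group identifications.

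There is, however, a genuine directional gap in your dictionary as applied to (\ref{reduceabssp}), (\ref{fodgalconj}) and (\ref{reducebialg}). Your ``Conversely'' clause says: given $W\supsetneq\{b\}$ in $B$ with the same (absolute) Mumford--Tate group, one can pull back to some $\tilde W_0\supsetneq Z$ with the same group. By contraposition this yields $Z$ (dR-absolutely) special $\Rightarrow$ $\{b\}$ (dR-absolutely) special, which is only the first step of (\ref{reduceabssp}). It does \emph{not} give the implication you actually invoke in the third step, namely $\{b\}$ dR-absolutely special $\Rightarrow$ $Z$ dR-absolutely special; your ``converse half'' runs the wrong way. (There is a secondary issue even for the direction you do claim: it is not clear that the component $\tilde W_0$ of $f^{-1}(W)$ containing $Z$ is strictly larger than $Z$. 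After desingularizing $Y$ the map $f$ can contract divisors, and $Z$ may well remain a component of $f^{-1}(W)$ for a curve $W\ni b$.)

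The fix, which is exactly what the paper does, is to argue by \emph{pushforward} rather than pullback. Given $W\supseteq Z$ in $Y$ with $G_W^{AH}=G_Z^{AH}$ (one may take $W$ special), set $W':=f(W)$; Proposition~\ref{AHquotient} gives $G_{W'}^{AH}=G_b^{AH}$, so the assumed dR-absolute speciality of $b$ forces $W'=\{b\}$. Hence $W\subset f^{-1}(b)$, and since $Z$ is already an irreducible component of $f^{-1}(b)$ by your forward step, $W=Z$. The same pushforward argument, with ordinary Mumford--Tate groups and Proposition~\ref{kernel} to transport the factorization to the $\sigma$-conjugate, handles (\ref{fodgalconj}); and (\ref{reducebialg}) then follows exactly as you outline. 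All the ingredients you list suffice --- you only need to reverse the flow of the argument from $Y$ to $B$ rather than from $B$ to $Y$.
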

\begin{proof}
Let $(\mathbb{V}^{\sigma})_{\sigma}$ be a $\bar{\mathbb{Q}}$-variation on $S$ of geometric origin.
For a subvariety $Z\subset S$, we construct a morphism $f: S \to B$ over $\bar{\mathbb{Q}}$ and a $\bar{\mathbb{Q}}$-motivic variation $({\mathbb{V}'}^{ \sigma}_B)_{\sigma}$ as described above.
By construction, the image of $Z$ under the period map $\Phi'$ is a point. As the map $\Psi$ is quasi-finite, we see that $x = f(Z)$ is a point of $B$.
We show that the various conjectures for $Z$ can be reduced to the ones for the point $x \in B(\mathbb{C})$.

\begin{enumerate}[(i)]
\item
Let $Z \subset S$ be a closed irreducible subvariety. If $Y$ is a special subvariety containing $Z$ with $G_Y = G_Z$, then Lemma \ref{lem2} shows that $G_Y^{AH} = G_Z^{AH}$. Hence we may assume that $Z$ is special.
By assumption, Deligne's conjecture holds for the special point $x$ in the $\bar{\mathbb{Q}}$-motivic variation $({\mathbb{V}'}^{ \sigma}_B)_{\sigma}$ on $B$. Thus we have the equality $G_x = G_x^{AH}$. Proposition \ref{AHquotient} now implies that $G'_Z = {G'}_Z^{AH}$. It follows that $$G_Z = H_Z\cdot G'_Z = H_Z \cdot {G'}_Z^{AH} = G_Z^{AH}.$$
\item \label{argument(ii)}
Let $Z \subset S$ be a special subvariety. We have to prove that if the special point $x \in B(\mathbb{C})$ is dR-absolutely special for $({\mathbb{V}'}^{ \sigma}_B)_{\sigma} $, then $Z$ is dR-absolutely special for $(\mathbb{V}^{\sigma})_{\sigma}$. Indeed, if $Z \subset W \subset S$ is such that $G_Z^{AH}=G_W^{AH}$ then setting $W' :=f(W)$ we obtain $G^{AH}_{W'} = G^{AH}_x$. We may assume that $W$ is special. As $x$ is dR-absolutely special by assumption, we get $W' =\{x\}$. Since $W$ was assumed to be special, it is thus an irreducible component of $f^{-1}(\{x\})$, as is $Z$. We conclude that $W=Z$ and $Z$ is dR-absolutely special.
\item
Let $Z\subset S$ be a special subvariety. By assumption, the special point $x=f(Z)$ is defined over $\bar{\mathbb{Q}}$ and its Galois conjugates $x^{\sigma} \in B^{\sigma} $ are special points.
Since $f$ is defined over $\bar{\mathbb{Q}}$, so is $Z$. Similarly, $Z^{\sigma}$ is an irreducible component of $(f^{\sigma})^{-1}(x^{\sigma})$ and since $x^{\sigma}$ is special it follows from Proposition \ref{kernel} that $Z^{\sigma} \subset S^{\sigma}$ is a special subvariety.
\item
It follows from the factorization of the period map for $\mathbb{V}'$ that if $Z \subset S$ is $\bar{\mathbb{Q}}$-bialgebraic, then $x \in B(\bar{\mathbb{Q}})$ is a $\bar{\mathbb{Q}}$-bialgebraic point. By assumption, $x$ is dR-absolutely special. Arguing as in (\ref{argument(ii)}), we see that $Z \subset S$ is dR-absolutely special.
\end{enumerate}

\end{proof}

\begin{rem}
The first part of Theorem \ref{reducepoints}(\ref{fodgalconj}) was already proven in \cite{KOU}. We emphasize that we follow the same strategy, except that their proof uses an argument where one is forced to change the $\bar{\mathbb{Q}}$-structure on $\mathcal{V}$ in order to prove that $\mathcal{V}'$ is defined over $\bar{\mathbb{Q}}$. By doing so, one also changes the notion of dR-absolute Hodge cycles.
As a consequence, their proof does not allow the conclusion for Galois conjugates in the second part of Theorem \ref{reducepoints}(\ref{fodgalconj}), let alone a proof of the other parts of Theorem \ref{reducepoints}.
The formalism of dR-absolutely special subvarieties allows us to show that $\mathcal{V}'$ is defined over $\bar{\mathbb{Q}}$ without affecting the $\bar{\mathbb{Q}}$-structure.
\end{rem}

\bibliographystyle{alpha}
\bibliography{Abssp_arXiv_new}

\end{document}